\newtheorem{thma}{Theorem}
\renewcommand*{\thethma}{\Alph{thma}}
\newtheorem{thm}{Theorem}
\newtheorem{lem}[thm]{Lemma}
\theoremstyle{definition}
\newtheorem{df}[thm]{Definition}
\newtheorem{notation}[thm]{Notation}
\newcommand\xleftrightarrow[2][]{%
	\ext@arrow 9999{\longleftrightarrowfill@}{#1}{#2}}
\newcommand\longleftrightarrowfill@{%
	\arrowfill@\leftarrow\relbar\rightarrow}
\newcommand{\Hom}{\textnormal{Hom}}
\newcommand{\rar}{\rightarrow}
\newcommand{\tr}{\textnormal{tr}}
\newcommand{\C}{\mathbb{C}}
\newcommand{\cat}[1]{\mathcal{#1}}
\newcommand{\Obj}{\textnormal{Obj}}
\newcommand{\id}{\textnormal{id}}
\newcommand{\op}{\textnormal{op}}
\newcommand{\End}{\textnormal{End}}
\newcommand{\Id}{\textnormal{Id}}
\newcommand{\Op}{\textnormal{Op}}
\newcommand{\ev}{\textnormal{ev}}
\newcommand{\coev}{\tn{coev}}
\newcommand{\Vect}{\mathbf{Vect}}
\newcommand{\Tr}{\textnormal{Tr}}
\newcommand{\qdim}{\textnormal{qdim}}
\newcommand{\tn}[1]{\textnormal{#1}}
\renewcommand{\bf}[1]{\textbf{#1}}
\newcommand{\inc}{\hookrightarrow}
\newcommand{\mbb}[1]{\mathbb{#1}}
\newcommand{\TR}{\otimes_\R}
\newcommand{\GT}{\hat{\otimes}}
\newcommand{\svec}{\mathbf{sVect}}
\newcommand{\cattens}[1]{\mathop{\boxtimes}\limits_{#1}}
\newcommand{\slin}{\mathbf{sLinCat}}
\newcommand{\lincat}{\mathbf{LinCat}}
\newcommand{\sbx}{\stackrel{s}{\boxtimes}}
\newcommand{\Rep}{\textnormal{Rep}}
\newcommand{\lcat}[1]{\cat{#1}\lincat}
\newcommand{\bxc}[1]{\cattens{\cat{#1}}}
\newcommand{\Fun}{\textnormal{Fun}}
\newcommand{\Rar}{\Rightarrow}
\newcommand{\cattimes}[1]{\mathop{\times}\limits_{\cat{#1}}}
\newcommand{\ehom}{\underline{\Hom}}
\newcommand{\eend}{\underline{\End}}
\newcommand{\raru}[1]{\mathop{\rar}\limits^{#1}}
\newcommand{\bx}{\boxtimes}
\newcommand{\tens}[1]{\mathop{\otimes}\limits_{#1}}
\newcommand{\rlaru}[1]{\mathop{\longrightarrow}\limits^{#1}}
\newcommand{\dcentcat}[1]{\cat{Z}(\cat{#1})}
\newcommand{\symt}{\mathop{\otimes}\limits_{s}}
\newcommand{\cont}{\mathop{\otimes}\limits_{c}}
\newcommand{\cond}{\mathop{\cdot}\limits_c}
\newcommand{\SA}{\cat{O}(\cat{A})}
\newcommand{\SC}{\cat{O}(\cat{C})}
\newcommand{\rt}{\cattens{\tn{red}}^\cat{A}}
\title{The Drinfeld Centre of a Symmetric Fusion Category is 2-Fold Monoidal}
\author{Thomas A. Wasserman}
\begin{document}
	
\maketitle

\begin{abstract}
	We show that the Drinfeld centre of a symmetric fusion category over an algebraically closed field of characteristic zero is a bilax 2-fold monoidal category. That is, it carries two monoidal structures, the convolution and symmetric tensor products, that are bilax monoidal functors with respect to each other. We additionally show that the braiding and symmetry for the convolution and symmetric tensor products are compatible with this bilax structure.
	
	We establish these properties without referring to Tannaka duality for the symmetric fusion category. This has the advantage that all constructions are done purely in terms of the fusion category structure, making the result easy to use in other contexts. 
\end{abstract}

\tableofcontents

\section{Introduction}
By the Eckmann-Hilton argument, any two monoid structures $\cdot$ and $\star$ on a set $S$ that have the same unit and are compatible in the sense that for all $a,b,c,d\in S$
$$
(a\cdot b)\star (c \cdot d)= (a \star c) \cdot (b \star d),
$$ 
i.e. that are mutually homomorphic, are commutative and equal. When considering monoidal categories a similar argument holds. Categorifying (see \cite{Baez1998} for an introduction to this concept) the above relation, replacing the elements $a,b,c,d$ with objects and the equality with a natural isomorphism, we obtain the notion of strong compatibility of monoidal structures. The Eckmann-Hilton argument in this setting then shows that any two strongly compatible monoidal structures on a category are naturally isomorphic and braided. However, if one relaxes the compatibility to be lax, (so not given by a natural isomorphism, but rather just a natural transformation), the Eckmann-Hilton argument no longer holds. This allows for the existence of lax 2-fold monoidal categories. In the linear case, these are called lax 2-fold tensor categories. 

2-fold (and more generally $n$-fold) monoidal stuctures have been studied in the literature because of their connection to $n$-fold loop spaces \cite{Balteanu1998}. By the loop space recognition theorem, the classifying space of a monoidal category is a loop space, and a braided monoidal category corresponds to a 2-fold loop space. If the braiding is symmetric, the classifying space is in fact an infinite loop space. This jump 
begs the question: what kind of category has an $n$-fold loop space as its classifying space? The notion of $n$-fold monoidal category turns out to be an answer to this question.

In previous work \cite{Wasserman2017}, it was established that the Drinfeld centre \cite{Majid1991,Joyal1991a} of a symmetric fusion category over an algebraically closed field of characteristic zero carries two monoidal structures: its usual tensor product (that we will refer to as the convolution tensor product) as well as a symmetric tensor product. The Drinfeld centre construction produces a (non-degenerate) braided fusion category out of a fusion category. This construction has been well studied, and is of particular interest in the context of fully-extended three-dimensional topological quantum field theories with values in the tricategory of tensor categories. There, the value of the circle turns out to be the Drinfeld centre of the value of the point for these theories. By Tannaka duality\footnote{Tannaka duality for symmetric fusion categories was proved by Deligne \cite{Deligne1990,Deligne2002}. It says that any symmetric fusion category is the representation category of a finite (super) group. The non-super case is referred to as Tannakian.}, if the input of the Drinfeld centre construction is a symmetric fusion category, then the Drinfeld centre will be a category $\Vect_G[G]$ of equivariant (for the conjugation action) vector bundles over a finite group. This category carries two monoidal structures: the fibrewise tensor product $\otimes_f$ of vector bundles, and the convolution tensor product with fibres $(V\otimes_\tn{conv}W)_g=\bigoplus_{g_1g_2=g} V_{g_1}\otimes W_{g_2}$. The fibrewise tensor product and the convolution tensor product satisfy $(V\otimes_f W) \otimes_\tn{conv} (V'\otimes_f W') \hookrightarrow (V \otimes_\tn{conv} V')\otimes_f (W\otimes_\tn{conv} W')$. In the Tannakian case, the symmetric tensor product on the Drinfeld centre corresponds to the fibrewise tensor product, while the usual tensor product corresponds to convolution (hence the name for the usual tensor product on the Drinfeld centre of an abstract symmetric fusion category). The paper \cite{Wasserman2017} can be viewed as defining the fibrewise tensor product without reference to Tannaka duality. The present paper will continue along these lines: we show, without reference to Tannaka duality, that the symmetric tensor product and the convolution tensor product satisfy a similar relation to the one the fibrewise and convolution tensor products satisfy, and that the natural transformation expressing this relation is compatible with the structure natural isomorphisms (associators, unitors and braidings) of these two monoidal structures.\footnote{Alternatively, invoking Tannaka duality, one could establish this result by performing the corresponding computations in $\Vect_G[G]$. The method presented here has the advantage of treating the Tannakian and super-Tannakian cases on equal footing. It additionally allows one to perform the necessary computations using string diagrams, which makes for a visually more appealing presentation of these computations.}

The goal for this paper is then to show that the Drinfeld centre of a symmetric fusion category over an algebraically closed field of characteristic zero is a 2-fold tensor category for its convolution tensor product together with the symmetric tensor product defined in \cite{Wasserman2017}, with compatibility given by the relation from the previous paragraph. We will additionally show that these tensor products are also oplaxly compatible, making it into a bilax 2-fold tensor category. Furthermore, the compatibility morphisms for the oplax structures are one-sided inverses for those of lax structure. To capture this extra property, we define the notion of strongly inclusive bilax 2-fold tensor category. Additionally, the braiding and symmetry for the convolution and symmetric tensor product are compatible with the lax structure, so the Drinfeld centre is a braided 2-fold tensor category, for which one of the braidings is symmetric. We will refer to this last property as being vertically symmetric. All in all, we will show:

\begin{thma}\label{DCtha}
	Let $\cat{A}$ be a symmetric fusion category over an algebraically closed field of characteristic zero, and let $\otimes_c$ and $\otimes_s$ denote the usual (convolution) and symmetric tensor products on its Drinfeld centre $\dcentcat{A}$, respectively. Then $(\dcentcat{A},\otimes_c,\otimes_s)$ is a vertically symmetric braided strongly inclusive bilax 2-fold tensor category.
\end{thma}

Our proofs will make use of string diagram calculus, this gives a visually appealing presentation of the computations. Additionally, it makes the results easier to use in future work, and increases the potential portability of the proofs to contexts outside of symmetric fusion categories over an algebraically closed field of characteristic zero. 

In upcoming work, \cite{Wasserman2017b} we will use this structure to define the notion of a $\dcentcat{A}$-crossed braided category. These are categories $\cat{C}$ enriched and tensored over $(\dcentcat{A},\otimes_s)$, that have a monoidal structure that factors through $\cat{C}\cattens{c}\cat{C}$. Here $\cat{C}\cattens{c}\cat{C}$ denotes the $(\dcentcat{A},\otimes_s)$-enriched and tensored category which has as objects pairs $c\boxtimes c'$ with $c,c'\in\cat{C}$, and as hom-objects between $c_1\boxtimes c_1'$ and $c_2\boxtimes c_2'$ the tensor product $\cat{C}(c_1,c_2)\otimes_c \cat{C}(c_1',c_2')$. This category carries a switch functor that swaps the objects in the pairs and applies the braiding for $\otimes_c$ on the hom-objects. The monoidal structure on such a $\dcentcat{A}$-crossed braided category $\cat{C}$ is required to be braided with respect to this switch functor.

In recent work \cite{Tham2020}, studying four dimensional topological quantum field theories with values in braided tensor categories, Tham independently discovered a corresponding second braided monoidal structure on the Drinfeld centre of a braided fusion category.

The outline of this paper is as follows. We start by recalling some definitions and notation in Section \ref{DCsetupsect}, and the definition and properties of the symmetric tensor product in Section \ref{DCsymtensect}. In Section \ref{DClax2foldsect}, we define the notion of lax 2-fold monoidal category. We spell out what it means for such a category to be braided (or symmetric). We also give definitions that capture the extra properties that the lax structures on the Drinfeld centre of a symmetric fusion category exhibit. The rest of this paper, Section \ref{DCdcaslax2foldsect}, is then devoted to proving the main Theorem \ref{DCtha} of this paper.

\subsubsection*{Acknowledgements}
The author thanks the Engineering and Physical Sciences Research Council for the United Kingdom, the Prins Bernhard Cultuurfonds, the Hendrik Mullerfonds, the Foundation Vrijvrouwe van Renswoude and the Vreedefonds for their financial support. The author is supported by the by the Centre for Symmetry and Deformation at the University of Copenhagen (CPH-SYM-DNRF92) and Nathalie Wahl's European Research Council Consolidator Grant (772960).

Chris Douglas and Andr\'e Henriques have the author's thanks for their support and advise while preparing this document. Thanks go to Mark Penney for pointing the author to the definition of 2-fold monoidal category found in the literature. We would like to thank Michael M\"{u}ger and Ulrike Tillmann for helpful comments and suggestions. The author is grateful to anonymous referees for their comments and advice.

\section{Preliminaries}\label{DCprelims}

In this section we give some notation, recall the definition of the symmetric tensor product $\otimes_s$ on the Drinfeld centre $\dcentcat{A}$ of a symmetric ribbon fusion category $\cat{A}$ over an algebraically closed field of characteristic zero, and set up the theory of lax 2-fold monoidal categories.

\subsection{Notation}\label{DCsetupsect}
We will suppress the associators of $\cat{A}$ (and hence of $\dcentcat{A}$) throughout this paper, and suppress the symbol $\otimes$, when there is no risk of confusion. We will make use of the string diagram calculus for ribbon categories, reading the diagrams from bottom to top. The notation used here is the same as in \cite{Wasserman2017}, we recall it for convenience of the reader.

\subsubsection{String Diagrams}
We will make use of string diagram calculus as introduced in \cite{Joyal1991}.
We will use the following conventions when drawing string diagrams in $\dcentcat{A}$. First of all, recall that objects of the Drinfeld centre are pairs $d=(a,\beta)\in\dcentcat{A}$, with $a\in \cat{A}$ and half-braiding $\beta\colon  -\otimes a \Rightarrow a\otimes-$. For $c \in \dcentcat{A}$ we will denote
$$
\beta_c=
\hbox{
	\begin{tikzpicture}[baseline=(current  bounding  box.center)]
	\node (c) at (0.5,0){\strut$c$};
	\node (a) at (1, 0){\strut$d$};

	\coordinate (co) at (1,1);
	\coordinate (ao) at ( 0.5,1);
	\begin{knot}[clip width=4]
	\strand [thick] (a) to [out=90,in=-90] (ao);
	\strand [thick] (c) to [out=90,in=-90] (co);
	\end{knot}
	\end{tikzpicture}
}
.
$$ 
We will use unresolved crossings to signal the use of the symmetry $s$ in $\cat{A}$. That is, for $(a,\beta),(a',\beta')\in\dcentcat{A}$,
$$
s_{a',a}=:
\hbox{
	\begin{tikzpicture}[baseline=(current  bounding  box.center)]
	\node (c) at (0.5,0){\strut$a'$};
	\node (a) at (1, 0){\strut$a$};

	\coordinate (co) at (1,1);
	\coordinate (ao) at ( 0.5,1);
	\begin{knot}[clip width=4]
	\strand [thick] (a) to [out=90,in=-90] (ao);
	\end{knot}
	\draw [thick] (c) to [out=90,in=-90] (co);
	\end{tikzpicture}
}.
$$
To make manipulations of string diagrams easier to follow, we will sometimes choose to resolve crossings between objects in $\cat{A}\subset \dcentcat{A}$ and objects in $\dcentcat{A}$. So, given $(a,s_{-,a})\in\cat{A}\subset \dcentcat{A}$ and $c\in\dcentcat{A}$, 
$$
s_{c,a}=:
\hbox{
	\begin{tikzpicture}[baseline=(current  bounding  box.center)]
	\node (c) at (0.5,0){\strut$c$};
	\node (a) at (1, 0){\strut$a$};

	\coordinate (co) at (1,1);
	\coordinate (ao) at ( 0.5,1);
	\begin{knot}[clip width=4]
	\strand [thick] (a) to [out=90,in=-90] (ao);
	\end{knot}
	\draw [thick] (c) to [out=90,in=-90] (co);
	\end{tikzpicture}
}
=
\hbox{
	\begin{tikzpicture}[baseline=(current  bounding  box.center)]
	\node (c) at (0.5,0){\strut$c$};
	\node (a) at (1, 0){\strut$a$};

	\coordinate (co) at (1,1);
	\coordinate (ao) at ( 0.5,1);
	\begin{knot}[clip width=4]
	\strand [thick] (a) to [out=90,in=-90] (ao);
	\strand [thick] (c) to [out=90,in=-90] (co);
	\end{knot}
	\end{tikzpicture}
},
$$
When also $c=(a',s_{-,a'})\in\cat{A}\subset\dcentcat{A}$, we have:
\begin{equation}\label{DCtransparancy}
s_{a',a}=:
\hbox{
	\begin{tikzpicture}[baseline=(current  bounding  box.center)]
	\node (c) at (0.5,0){\strut$c$};
	\node (a) at (1, 0){\strut$a$};

	\coordinate (co) at (1,1);
	\coordinate (ao) at ( 0.5,1);
	\begin{knot}[clip width=4]
	\strand [thick] (a) to [out=90,in=-90] (ao);
	\end{knot}
	\draw [thick] (c) to [out=90,in=-90] (co);
	\end{tikzpicture}
}
=
\hbox{
	\begin{tikzpicture}[baseline=(current  bounding  box.center)]
	\node (c) at (0.5,0){\strut$c$};
	\node (a) at (1, 0){\strut$a$};

	\coordinate (co) at (1,1);
	\coordinate (ao) at ( 0.5,1);
	\begin{knot}[clip width=4]
	\strand [thick] (a) to [out=90,in=-90] (ao);
	\strand [thick] (c) to [out=90,in=-90] (co);
	\end{knot}
	\end{tikzpicture}
}
=
\hbox{
	\begin{tikzpicture}[baseline=(current  bounding  box.center)]
	\node (c) at (0.5,0){\strut$c$};
	\node (a) at (1, 0){\strut$a$};

	\coordinate (co) at (1,1);
	\coordinate (ao) at ( 0.5,1);
	\begin{knot}[clip width=4]
	\strand [thick] (a) to [out=90,in=-90] (ao);
	\strand [thick] (c) to [out=90,in=-90] (co);
	\flipcrossings{1}
	\end{knot}
	\end{tikzpicture}
},
\end{equation}
as then both half-braidings correspond to the symmetry in $\cat{A}$. 

By the naturality and monoidality of the symmetry, the resolved and unresolved crossings interact as follows:
\begin{equation}\label{DCcrossinginteraction}
\hbox{
	\begin{tikzpicture}[baseline=(current  bounding  box.center)]
	\coordinate (c) at (0.5,0);
	\coordinate (a) at (1, 0);
	
	\coordinate (b) at (0,0);
	\coordinate (bo) at (0.5,2);
	
	\coordinate (co) at (1,2);
	\coordinate (ao) at ( 0,2);
	\begin{knot}[clip width=4]
	\strand [thick] (a) to [out=90,in=-90] (ao);
	\strand [thick] (c) to [out=90,in=-90] (co);
	\flipcrossings{1}
	\end{knot}
	\draw [thick] (b) to [out=90,in=-90] (bo);
	\end{tikzpicture}
}=
\hbox{
	\begin{tikzpicture}[baseline=(current  bounding  box.center)]
	\coordinate (c) at (0.5,0);
	\coordinate (a) at (1, 0);
	
	\coordinate (b) at (0,0);
	\coordinate (bo) at (0.5,2);
	
	\coordinate (lc) at (0.1,1);
	\coordinate (rc) at (0.5,1);
	
	\coordinate (co) at (1,2);
	\coordinate (ao) at ( 0,2);
	\begin{knot}[clip width=4]
	\strand [thick] (a) to [out=90,in=-90] (ao);
	\strand [thick] (c) to [out=90,in=-90] (lc) to [out=90,in=-90] (co);
	\flipcrossings{1}
	\end{knot}
	\draw [thick] (b) to [out=90,in=-90] (rc) to [out=90,in=-90] (bo);
	\end{tikzpicture}
}.	
\end{equation}

We will often use the following language:
\begin{df}\label{DCtransparentdef}
	Let $c,c'\in\cat{C}$ be objects of a braided monoidal category. If 
	$$
	\hbox{
		\begin{tikzpicture}[baseline=(current  bounding  box.center)]
		\node (c) at (0.5,0){\strut$c$};
		\node (a) at (1, 0){\strut$c'$};

		\coordinate (co) at (1,1);
		\coordinate (ao) at ( 0.5,1);
		\begin{knot}[clip width=4]
		\strand [thick] (a) to [out=90,in=-90] (ao);
		\strand [thick] (c) to [out=90,in=-90] (co);
		\end{knot}
		\end{tikzpicture}
	}
	=
	\hbox{
		\begin{tikzpicture}[baseline=(current  bounding  box.center)]
		\node (c) at (0.5,0){\strut$c$};
		\node (a) at (1, 0){\strut$c'$};

		\coordinate (co) at (1,1);
		\coordinate (ao) at ( 0.5,1);
		\begin{knot}[clip width=4]
		\strand [thick] (a) to [out=90,in=-90] (ao);
		\strand [thick] (c) to [out=90,in=-90] (co);
		\flipcrossings{1}
		\end{knot}
		\end{tikzpicture}
	},
	$$
	then $a$ and $c$ are said to be \emph{transparent} to each other.
\end{df}

In the rest of this work, we will denote by $\cat{O}(\cat{A})$ a set of representatives of the isomorphism classes of simple objects of $\cat{A}$. For such an $i\in\cat{O}(\cat{A})$ we will denote its quantum dimension by 
$$
d_i=
\hbox{
	\begin{tikzpicture}[baseline=(current  bounding  box.center)]
	\coordinate (west) at (-1,0);
	\coordinate (north) at (0,1);
	\coordinate (east) at (1,0);
	\coordinate (south) at (0,-1);
	\node (i) at (-0.8,0) {\strut$i$};
	
	\begin{knot}[clip width=4]
	\strand [thick] (east)
	to [out=-90,in=0] (south)
	to [out=-180,in=-90] (west)
	to [out=90,in=-180] (north)
	to [out=0,in=90] (east);
	\end{knot}
	\end{tikzpicture}
},
$$
here we have suppressed, as we will do throughout, the pivotal structure $i\cong i^{**}$ on the right hand side of the loop. We will also denote:
\begin{equation}\label{DCbulletdef}
\hbox{
	\begin{tikzpicture}[baseline=(current  bounding  box.center)]
	\node (i) at (0,0) {\strut$i$};
	\node (idd) at (0,2) {\strut$i^{**}$};
	
	\coordinate (dot) at (0,1);
	
	\begin{knot}[clip width=4]
	\strand [thick] (i)
	to [out=90,in=-90] (dot)
	to [out=90,in=-90] (idd);
	\end{knot}
	\fill (dot) circle[radius=2pt];
	\end{tikzpicture}
}
:=
\hbox{
	\begin{tikzpicture}[baseline=(current  bounding  box.center)]
	\node (i) at (0,0) {\strut$i$};
	\node (idd) at (0,2) {\strut$i^{**}$};
	
	\coordinate (dot) at (0.25,1.2);
	\coordinate (cc) at (0.25,0.8);
	
	\begin{knot}[clip width=4]
	\strand [thick] (i)
	to [out=90,in=180] (dot)
	to [out=0,in=0] (cc)
	to [out=180,in=-90] (idd);
	\end{knot}
	\end{tikzpicture}
}.
\end{equation}
We define composing this morphism with the pivotal structure to be the twist $\theta_i$ on $i$. We see that, because $\cat{A}$ is symmetric, the twist will be $\pm\id$ on simple objects. The global dimension of $\cat{A}$ will be denoted by
$$
D:=\sum_{i\in\cat{O}(\cat{A})}d_i^2.
$$
As we are working with fusion categories over an algebraically closed field of characteristic zero, this global dimension will always be non-zero \cite[Theorem 2.3]{Etingof2002}.

We will additionally use the notation
\begin{equation}\label{DCringdef}
\hbox{
	\begin{tikzpicture}[baseline=(current  bounding  box.center)]
	\coordinate (west) at (-1,0);
	\coordinate (north) at (0,1);
	\coordinate (east) at (1,0);
	\coordinate (south) at (0,-1);

	\begin{knot}[clip width=4]
	\strand [blue, thick] (west)
	to [out=90,in=-180] (north)
	to [out=0,in=90] (east)
	to [out=-90,in=0] (south)
	to [out=-180,in=-90] (west);
	\end{knot}
	\end{tikzpicture}
}
=\sum_{i\in\cat{O}(\cat{A})}\frac{d_i}{D}
\hbox{
	\begin{tikzpicture}[baseline=(current  bounding  box.center)]
	\coordinate (west) at (-1,0);
	\coordinate (north) at (0,1);
	\coordinate (east) at (1,0);
	\coordinate (south) at (0,-1);
	
	\node (i) at (-0.8,0) {\strut$i$};
	\begin{knot}[clip width=4]
	\strand [thick] (east)
	to [out=-90,in=0] (south)
	to [out=-180,in=-90] (west)
	to [out=90,in=-180] (north)
	to [out=0,in=90] (east);
	\end{knot}
	\end{tikzpicture}
},
\end{equation}
whenever we encounter an unlabelled loop in a string diagram.

\subsubsection{Direct Sum Decompositions}
In our proofs we will make frequent use of the following basic techniques from the theory of fusion categories, that we introduce in the setting of a ribbon fusion category $\cat{C}$. 

\begin{notation}
	Given $i,j,k\in\cat{C}$, we will denote by $B(ij,k)$ a basis for the vector space $\cat{C}(ij,k)$.
\end{notation}

This choice $B(ij,k)$ for each $k\in\SC$, gives for fixed $i,j$ a direct sum decomposition of $ij$. This corresponds to a decomposition of the identity on $ij$ as:
\begin{equation}\label{DCresid}
\hbox{
	\begin{tikzpicture}[baseline=(current  bounding  box.center)]

	\node (a) at (-0.25,-2) {\strut$i$};
	\node (i) at (0.25,-2) {\strut$j$};
	\coordinate (ao) at (-0.25,1);
	\coordinate (io) at (0.25,1);

	\begin{knot}[clip width=4]
	
	\strand [thick] (a) to [out=90,in=-90] (ao);
	\strand [thick] (i) to (io);
	
	\end{knot}
	\end{tikzpicture}
}
= \sum_{k\in\cat{O}(\cat{C})}\sum_{\phi\in B(ij,k)}
\hbox{
	\begin{tikzpicture}[baseline=(current  bounding  box.center)]

	\node (a) at (-0.25,-2) {\strut$i$};
	\node (i) at (0.25,-2) {\strut$j$};
	\coordinate (ao) at (-0.25,1);
	\coordinate (io) at (0.25,1);
	
	\node (phi) at (0,-1) [draw,thick]{\strut$\phi$};
	\node (phid) at (0,0.25) [draw,thick]{\strut$\phi^t$};
	
	\node (k) at (-0.25,-0.5) {\strut$k$};
	
	\begin{knot}[clip width=4]
	\strand [thick] (a) to [out=90,in=-110] (phi);
	\strand [thick] (phi) to [out=90,in=-90] (phid);
	\strand [thick] (phid) to [out =110,in=-90] (ao);
	\strand [thick] (i) to  [out=90, in =-70] (phi);
	\strand [thick] (phid) to [out=70,in=-90] (io);
	\end{knot}
	\end{tikzpicture}
}
.
\end{equation}

The $\phi^t$ are defined below. The pairs $(\phi,\phi^t)$ for a given simple object $k$ are (projection, inclusion) pairs for subobjects of $ij$ isomorphic to $k$. Running through all $\phi$ from the basis $B(ij,k)$ ensures we exhaust the $k$-summands of $ij$ without dependence.
\begin{df}\label{DCtransposedef}
	Let $\phi\in B(ij,k)$, for simple objects $i,j,k$. Then a \emph{transpose} $\phi^t$ of $\phi$ is the dual basis element to $\phi$ in a dual basis for $\cat{C}(k,ij)=\cat{C}(ij,k)$, with respect to the non-degenerate pairing:
	$$
	\circ\colon \cat{C}(ij,k)\otimes\cat{C}(k,ij)\rar \cat{C}(k,k).
	$$ 
	Note that $\phi^t$ is such that $\phi\circ \phi^t=\id_k$ and $\psi\circ \phi^t=0$ for $\psi\in B(ij,k)-\{\phi\}$. 
\end{df}

Choosing direct sum decompositions also induces other direct sum decompositions:

\begin{lem}\label{DCotherdirectsum}
	Pick, for a fixed $i\in\SC$ and all $j\in\SC$, a resolution of the identity on $ij$ as in Equation \eqref{DCresid}. Then, for all $k\in\SC$:
	\begin{equation}\label{DChelpfulid}
	\hbox{
		\begin{tikzpicture}[baseline=(current  bounding  box.center)]
		
		\node (a) at (0.9,-2) {$i$};
		\node (k) at (0.1,-2) {$k^*$};
		
		\node (ao) at (0.9,1.5){$i$};
		\node (ko) at (0.1,1.5){$k^*$};

		\begin{knot}[clip width=4]
		\strand [thick] (k) to [out=90,in=-90] (ko);
		\strand [thick] (a) to [out=90,in=-90] (ao);
		\end{knot}
		\end{tikzpicture}
	}
	=
	\sum_{j\in\cat{O}(\cat{C})}\sum_{\phi\in B(ij,k)}\frac{d_j}{d_k}
	\hbox{
		\begin{tikzpicture}[baseline=(current  bounding  box.center)]
		
		\node (a) at (0.9,-2) {$i$};
		\node (k) at (0.1,-2) {$k^*$};
		
		\node (ao) at (0.9,1.5){$i$};
		\node (ko) at (0.1,1.5){$k^*$};

		\node (phi) at (1,-0.7) [draw,thick]{$\phi$};
		\node (phid) at (1,0.8) [draw,thick]{$\phi^t$};
		
		\coordinate (lc) at (1.5,-0.8);
		\coordinate (uc) at (1.5, 1);
		
		\node (i) at (1.75,0.5) {$j^*$};
		
		\begin{knot}[clip width=4]
		\strand [thick] (ko) to [out=-90,in=-90] (phid);
		\strand [thick] (phid) to [out=70,in=90] (uc) to [out=-90,in=90] (lc) to [out=-90,in=-70] (phi);
		\strand [thick] (phi) to [out=90,in=90] (k);
		\strand [thick] (a) to [out=90,in=-110] (phi);
		\strand [thick] (phid) to [out=110,in=-90] (ao);
		\end{knot}
		\end{tikzpicture}
	}.
	\end{equation}
\end{lem}
\begin{proof}
	A proof of this well-known result can be found in \cite{Wasserman2017}.
\end{proof}

It will be useful to have to following notation:
\begin{equation}\label{DCphistardef}
\hbox{
	\begin{tikzpicture}[baseline=(current  bounding  box.center)]

	\node (a) at (-0.25,-3) {\strut$i^*$};
	\node (i) at (0.25,-3) {\strut$a^*$};
	
	\node (phi) at (0,-1) [draw,thick]{\strut$\phi^*$};
	
	\node (k) at (0,1) {\strut$j^*$};
	
	\begin{knot}[clip width=4]
	\strand [thick] (a) to [out=90,in=-110] (phi);
	\strand [thick] (phi) to [out=90,in=-90] (k);
	\strand [thick] (i) to  [out=90, in =-70] (phi);
	\end{knot}
	\end{tikzpicture}
}
:=
\hbox{
	\begin{tikzpicture}[baseline=(current  bounding  box.center)]

	\node (i) at (-0.25,-3) {\strut$i^*$};
	\node (a) at (0.25,-3) {\strut$a^*$};
	
	\coordinate (ac) at (0.9,-0.6);
	\coordinate (ic) at (0.6,-0.8);
	
	\node (phi) at (0,-1) [draw,thick]{\strut$\phi^t$};
	
	\node (k) at (0,1) {\strut$j^*$};
	\coordinate (kc) at (-0.6,-1.3);
	
	\begin{knot}[clip width=4]
	\strand [thick] (a) to [out=90,in=-90] (ac) to [out=90,in=110] (phi);
	\strand [thick] (phi) to [out=-90,in=-90] (kc) to [out=90,in=-90] (k);
	\strand [thick] (i) to [in=-90,out=90] (ic) to [out=90, in =70] (phi);
	\end{knot}
	\end{tikzpicture}
}.
\end{equation}

The twist from Equation \eqref{DCbulletdef} interacts with the $\phi\in B(ij,k)$ in the following way:
\begin{equation}\label{DCtwisttrick}
\hbox{
	\begin{tikzpicture}[baseline=(current  bounding  box.center)]
	
	\node (phi) at (0,1)[draw]{\strut$\phi$};
	
	\coordinate (wc) at (-0.2,0.5);
	\coordinate (ec) at (0.2,0.5);
	
	\coordinate (nc) at (0,1.5);
	
	\node (k) at (-0.3,0){\strut$i$};
	\node (i) at (0.3,0){\strut$j$};
	
	\node (j) at (0,2){\strut$k$};
	
	\begin{knot}[clip width=4,clip radius=3pt]
	\strand [thick] (k) to [out=90,in=-110] (wc) to [out=70,in=-110] (phi);
	\strand [thick] (i) to [out=90,in=-70] (phi);
	\strand [thick] (phi) to [out=90,in=-90] (nc) to [out=90,in=-90] (j);
	\end{knot}
	\fill (wc) circle[radius=2pt];
	\end{tikzpicture}
}
=
\hbox{
	\begin{tikzpicture}[baseline=(current  bounding  box.center)]
	
	\node (phi) at (0,1)[draw]{\strut$\phi$};
	
	\coordinate (wc) at (-0.2,0.5);
	\coordinate (ec) at (0.2,0.5);
	
	\coordinate (nc) at (0,1.5);
	
	\node (k) at (-0.3,0){\strut$i$};
	\node (i) at (0.3,0){\strut$j$};
	
	\node (j) at (0,2){\strut$k$};
	
	\begin{knot}[clip width=4,clip radius=3pt]
	\strand [thick] (k) to [out=90,in=-110] (phi);
	\strand [thick] (i) to [out=90,in=-70] (ec) to [out=110,in=-70] (phi);
	\strand [thick] (phi) to [out=90,in=-90] (nc) to [out=90,in=-90] (j);
	\end{knot}
	\fill (ec) circle[radius=2pt];
	\fill (nc) circle[radius=2pt];
	\end{tikzpicture}
},
\end{equation}
which follows from the naturality of the twist, together with the fact that in a symmetric fusion category the twist is a monoidal automorphism of the identity functor that squares to $1$.

\subsection{The Symmetric Tensor Product on the Drinfeld Centre}\label{DCsymtensect}
From here onward, we fix a symmetric fusion category $\cat{A}$ over an algebraically closed field of characteristic zero. Its Drinfeld centre will be denoted $\dcentcat{A}$, and the usual braided tensor product (that we we will refer to as the convolution tensor product) on the Drinfeld centre will be denoted by $\otimes_c$. 

\subsubsection{Definition of the Symmetric Tensor Product}
The symmetric tensor product on the Drinfeld centre $\dcentcat{A}$ of the symmetric fusion category $\cat{A}$ is defined in \cite{Wasserman2017} in terms of the following idempotent on $c\otimes_c d$ for $c,d\in\dcentcat{A}$:
\begin{equation}\label{DCidempotent}
\Pi_{c,d}:=
\hbox{
	\begin{tikzpicture}[baseline=(current  bounding  box.center)]
	\coordinate (west) at (-1,0);
	\coordinate (north) at (0,0.5);
	\coordinate (east) at (1,0);
	\coordinate (south) at (0,-0.4);
	\node (a) at (-0.5,-1.4) {\strut$c$};
	\node (b) at (0.5,-1.4) {\strut$d$};
	\coordinate (ao) at (-0.5,1);
	\coordinate (bo) at (0.5,1);
	
	\begin{knot}[clip width=4]
	\strand [blue, thick] (west)
	to [out=90,in=-180] (north)
	to [out=0,in=90] (east)
	to [out=-90,in=0] (south)
	to [out=-180,in=-90] (west);
	\strand [thick] (a) to (ao);
	\strand [thick] (b) to (bo);
	\flipcrossings{1,4}
	\end{knot}
	\end{tikzpicture}
}
=\sum_{i\in\cat{O}(\cat{A})}\frac{d_i}{D}
\hbox{
	\begin{tikzpicture}[baseline=(current  bounding  box.center)]
	\coordinate (west) at (-1,0);
	\coordinate (north) at (0,0.5);
	\coordinate (east) at (1,0);
	\coordinate (south) at (0,-0.4);
	\node (a) at (-0.5,-1.4) {\strut$c$};
	\node (b) at (0.5,-1.4) {\strut$d$};
	\coordinate (ao) at (-0.5,1);
	\coordinate (bo) at (0.5,1);
	
	\node (i) at (-1.25,0){\strut$i$};
	
	\begin{knot}[clip width=4]
	\strand [thick] (west)
	to [out=90,in=-180] (north)
	to [out=0,in=90] (east)
	to [out=-90,in=0] (south)
	to [out=-180,in=-90] (west);
	\strand [thick] (a) to (ao);
	\strand [thick] (b) to (bo);
	\flipcrossings{1,4}
	\end{knot}
	\end{tikzpicture}
}.
\end{equation}

The idempotent $\Pi_{c,d}$ from Equation \eqref{DCidempotent} has an associated subobject denoted $c\otimes_\Pi d\in\dcentcat{A}$. We will use the following notation:
\begin{equation}\label{DCincprojprop}
\hbox{
	\begin{tikzpicture}[baseline=(current  bounding  box.center)]

	\node (truw) at (-0.3,1.2){\strut$c$};
	\node (true) at (0.3,1.2){\strut$d$};

	\node (tr1) at (0,2.2) {\strut$\bigtriangleup$};
	\coordinate (trd1w) at (-0.1,2.12);
	\coordinate (trd1e) at (0.1,2.12);
	\coordinate (tru1) at (0,2.35);
	\node (c1) at (0,3) {\strut$c\otimes_\Pi d$};

	\begin{knot}[clip width=4]
	\strand [thick] (true) to [out=90, in=-70] (trd1e);
	\strand [thick] (tru1) to (c1);
	\strand [thick] (truw) to [out=90,in=-110] (trd1w);
	\end{knot}
	\end{tikzpicture}
}
\tn{ and }
\hbox{
	\begin{tikzpicture}[baseline=(current  bounding  box.center)]
	
	\node (cf) at (0,-0.5){\strut$c\otimes_\Pi d$};

	\node (tr) at (0,0.1) {\strut$\bigtriangledown$};
	\coordinate (trd) at (0,0);
	\coordinate (truw) at (-0.1,00.2);
	\coordinate (true) at (0.1,0.2);

	\node (trd1w) at (-0.3,1.12){\strut$c$};
	\node (trd1e) at (0.3,1.12){\strut$d$};

	\begin{knot}[clip width=4]
	\strand [thick] (cf) to (trd);
	\strand [thick] (true) to [out=70, in=-90] (trd1e);
	\strand [thick] (truw) to [out=110,in=-90] (trd1w);
	\end{knot}
	\end{tikzpicture}
},
\end{equation}
for the projection and inclusion for the subobject associated to $\Pi_{c,d}$. These morphism satisfy
\begin{equation}\label{DCinclprojprops}
\hbox{
	\begin{tikzpicture}[baseline=(current  bounding  box.center)]

	\node (tr1) at (0,0.6) {\strut$\bigtriangleup$};
	\coordinate (trd1w2) at (-0.1,0.52);
	\coordinate (trd1e2) at (0.1,0.52);
	\coordinate (tru1) at (0,0.75);

	\node (tr) at (0,1.2) {\strut$\bigtriangledown$};
	\coordinate (trd) at (0,1.1);
	\coordinate (truw) at (-0.1,1.3);
	\coordinate (true) at (0.1,1.3);

	\node (trd1w) at (-0.3,2.2){\strut$c$};
	\node (trd1e) at (0.3,2.2){\strut$d$};
	
	\node (a) at (-0.3,-0.5){\strut$c$};
	\node (b) at (0.3,-0.5){\strut$d$};

	\begin{knot}[clip width=4]
	\strand [thick] (tru1) to (trd);
	\strand [thick] (true) to [out=70, in=-90] (trd1e);
	\strand [thick] (truw) to [out=110,in=-90] (trd1w);
	\strand [thick] (a) to [out=90,in=-110] (trd1w2);
	\strand [thick] (b) to [out=90,in=-70] (trd1e2);
	\end{knot}
	\end{tikzpicture}
}
=
\hbox{
	\begin{tikzpicture}[baseline=(current  bounding  box.center)]
	\coordinate (west) at (-0.5,-0.3);
	\coordinate (north) at (0,0);
	\coordinate (east) at (0.5,-0.3);
	\coordinate (south) at (0,-0.6);
	\node (a) at (-0.25,-1.5) {\strut$c$};
	\node (b) at (0.25,-1.5) {\strut$d$};
	\coordinate (ao) at (-0.25,1);
	\coordinate (bo) at (0.25,1);

	\begin{knot}[clip width=4]
	\strand [blue, thick] (west)
	to [out=90,in=-180] (north)
	to [out=0,in=90] (east)
	to [out=-90,in=0] (south)
	to [out=-180,in=-90] (west);
	\strand [thick] (a) to [out=90,in=-90] (ao);
	\strand [thick] (b) to (bo);
	\flipcrossings{1,4}
	\end{knot}
	\end{tikzpicture}
}
\tn{ and }
\hbox{
	\begin{tikzpicture}[baseline=(current  bounding  box.center)]
	
	\node (cf) at (0,-0.5){\strut$c\otimes_\Pi d$};

	\node (tr) at (0,0.1) {\strut$\bigtriangledown$};
	\coordinate (trd) at (0,0);
	\coordinate (truw) at (-0.1,00.2);
	\coordinate (true) at (0.1,0.2);

	\node (tr1) at (0,1.9) {\strut$\bigtriangleup$};
	\coordinate (trd1w) at (-0.1,1.82);
	\coordinate (trd1e) at (0.1,1.82);
	\coordinate (tru1) at (0,2.05);
	\node (c1) at (0,2.5) {};

	\begin{knot}[clip width=4]
	\strand [thick] (cf) to (trd);
	\strand [thick] (true) to [out=70, in=-70] (trd1e);
	\strand [thick] (tru1) to (c1);
	\strand [thick] (truw) to [out=110,in=-110] (trd1w);
	\end{knot}
	\end{tikzpicture}
}
=
\hbox{
	\begin{tikzpicture}[baseline=(current  bounding  box.center)]

	\node (inc) at (-0.5,-0.4){\strut$c\otimes_\Pi d$};

	\coordinate (outg) at (-0.5,2.5);
	
	\begin{knot}[clip width=4]
	\strand [thick] (inc)  to [out=90, in =-90] (outg);
	\end{knot}
	\end{tikzpicture}
}.
\end{equation}

The subobject associated to $\Pi_{c,d}$ has the crucial property that the half-braidings associated to both factors agree. For a complete treatment see \cite{Wasserman2017}.

\begin{df}[\cite{Wasserman2017}]\label{DCsymtensobj}
	The \emph{symmetric tensor product} on $\dcentcat{A}$ is the symmetric tensor product
	$$
	\otimes_s\colon  \dcentcat{A}\boxtimes\dcentcat{A}\rar \dcentcat{A}
	$$
	defined as follows. Let $c,d\in\dcentcat{A}$, and write $\Phi\colon \dcentcat{A}\rar\cat{A}$ for the forgetful functor. Then $c\otimes_s d\in \dcentcat{A}$ is the object $(\Phi(c\otimes_{\Pi} d),\beta)$, where $c\otimes_\Pi d$ is the subobject associated to $\Pi_{c,d}$, and $\beta$ is the half-braiding with components, for $a\in\cat{A}$:
	\begin{equation}\label{DChalfbraid}
	\beta_c=
	\hbox{
		\begin{tikzpicture}[baseline=(current  bounding  box.center)]
		
		\node (cf) at (0,-0.5){\strut$c\otimes_s d$};
		\node (inc) at (-0.75,-0.5){\strut$a$};

		\node (c1) at (0,3) {};
		
		\coordinate (outg) at (0.75,2.8);
		
		\begin{knot}[clip width=4]
		\strand [thick] (cf) to (c1);
		\strand [thick] (inc)  to [out=90, in =-90] (outg);
		\end{knot}
		\end{tikzpicture}
	}
	:=
	\hbox{
		\begin{tikzpicture}[baseline=(current  bounding  box.center)]
		
		\node (cf) at (0,-0.5){\strut$c\otimes_s d$};
		\node (inc) at (-0.75,-0.5){\strut$a$};
		
		\node (tr) at (0,0.6) {\strut$\bigtriangledown$};
		\coordinate (trd) at (0,0.5);
		\coordinate (truw) at (-0.1,0.7);
		\coordinate (true) at (0.1,0.7);

		\node (tr1) at (0,2.2) {\strut$\bigtriangleup$};
		\coordinate (trd1w) at (-0.1,2.12);
		\coordinate (trd1e) at (0.1,2.12);
		\coordinate (tru1) at (0,2.35);
		\node (c1) at (0,3) {};
		
		\coordinate (outg) at (0.75,2.8);
		
		\begin{knot}[clip width=4]
		\strand [thick] (cf) to (trd);
		\strand [thick] (true) to [out=70, in=-70] (trd1e);
		\strand [thick] (tru1) to (c1);
		\strand [thick] (inc)  to [out=90, in =-90] (outg);
		\strand [thick] (truw) to [out=110,in=-110] (trd1w);
		\end{knot}
		\end{tikzpicture}
	}
	=
	\hbox{
		\begin{tikzpicture}[baseline=(current  bounding  box.center)]
		
		\node (cf) at (0,-0.5){\strut$c\otimes_s d$};
		\node (inc) at (-0.75,-0.5){\strut$a$};
		
		\node (tr) at (0,0.5) {\strut$\bigtriangledown$};
		\coordinate (trd) at (0,0.4);
		\coordinate (truw) at (-0.1,00.6);
		\coordinate (true) at (0.1,0.6);

		\node (tr1) at (0,2.2) {\strut$\bigtriangleup$};
		\coordinate (trd1w) at (-0.1,2.12);
		\coordinate (trd1e) at (0.1,2.12); 
		\coordinate (tru1) at (0,2.35);
		\node (c1) at (0,3) {};
		
		\coordinate (outg) at (0.5,2.8);
		
		\begin{knot}[clip width=4]
		\strand [thick] (cf) to (trd);
		\strand [thick] (truw) to [out=110,in=-110] (trd1w);
		\strand [thick] (tru1) to (c1);
		\strand [thick] (inc)  to [out=90, in=-90] (outg);
		\strand [thick] (true) to [out=70, in=-70] (trd1e);
		\flipcrossings{}
		\end{knot}
		\end{tikzpicture}
	} ,
	\end{equation}
	where the last equality expresses the fact that the idempotent picks out the subobject on which the half-braidings of $c$ and $d$ agree.
	
	On morphisms $f\colon c\rar c'$, $g\colon d\rar d'$, the symmetric tensor product $\otimes_s$ is given by 
	\begin{equation}\label{DCsymtensmorph}
	\hbox{
		\begin{tikzpicture}[baseline=(current  bounding  box.center)]
		
		\node (cf) at (-1.1,-0.4){\strut$c$};
		\node (inc) at (0.3,-0.4){\strut$d$};
		
		\node (f) at (-1.1,1.5)[draw,minimum height=15pt]{\strut$f$};
		\node (ot) at (-0.375,1.4){\strut$\mathop{\otimes}\limits_\Vect$};
		\node (g) at (0.3,1.5)[draw,,minimum height=15pt]{\strut$g$};
		
		\node (c1) at (-1.1,3) {\strut$c'$};
		
		\node (outg) at (0.3,3){\strut$d'$};
		
		\begin{knot}[clip width=4]
		\strand [thick] (cf) to (f) to (c1);
		\strand [thick] (inc) to (g) to  (outg);
		\end{knot}
		\end{tikzpicture}
	}
	\mapsto
	\hbox{
		\begin{tikzpicture}[baseline=(current  bounding  box.center)]
		
		\node (cf) at (0,-0.5){\strut$c\otimes_s d$};

		\node (f) at (-0.4,1.2)[draw]{\strut$f$};
		\node (g) at (0.4,1.2)[draw]{\strut$g$};

		\node (tr) at (0,0.1) {\strut$\bigtriangledown$};
		\coordinate (trd) at (0,0);
		\coordinate (truw) at (-0.1,00.2);
		\coordinate (true) at (0.1,0.2);

		\node (tr1) at (0,2.2) {\strut$\bigtriangleup$};
		\coordinate (trd1w) at (-0.1,2.12);
		\coordinate (trd1e) at (0.1,2.12);
		\coordinate (tru1) at (0,2.35);
		\node (c1) at (0,3) {\strut$c'\otimes_s d'$};

		\begin{knot}[clip width=4]
		\strand [thick] (cf) to (trd);
		\strand [thick] (true) to [out=70, in=-90] (g);
		\strand [thick] (g) to [out=90,in=-70] (trd1e);
		\strand [thick] (tru1) to (c1);
		\strand [thick] (truw) to [out=110,in=-90] (f);
		\strand [thick] (f) to [out=90,in=-110] (trd1w);
		\end{knot}
		\end{tikzpicture}
	}.
	\end{equation}
	The associators for $\otimes_s$ are induced from the associators for $\cat{A}$, and will be suppressed. 
	The unit for the $\otimes_s$  is the object $\mathbb{I}_s$ which is $\sum_{i\in \cat{O}(\cat{A})} ii^*$, equipped with the half braiding:
	\begin{equation}\label{DCunithalfbraid}
	\hbox{
		\begin{tikzpicture}[baseline=(current  bounding  box.center)]
		\node (i) at (0,0) {\strut$\mathbb{I}_s$};
		\coordinate (ip) at (-0.1,0.27);
		\coordinate (o) at (0,3);
		\coordinate (op) at (-0.1,3);
		
		\node (a) at (-0.5,0){\strut$a$};
		\coordinate (ao) at (0.5,3);

		\begin{knot}[clip width=4]
		\strand [thick,blue] (i) to (o);
		\strand [thick,blue] (ip) to (op);
		\strand [thick] (a) to [out=90,in=-90] (ao);
		\end{knot}
		
		\end{tikzpicture}
	}
	:=
	\sum_{i,j\in\cat{O}(\cat{A})}\sum_{\phi\in B(ai,j)}
	\hbox{
		\begin{tikzpicture}[baseline=(current  bounding  box.center)]
		\node (i) at (0,0) {\strut$i$};
		\node (o) at (0,3){\strut$j$};
		
		\node (id) at (0.5,0){\strut$i^*$};
		\node (do) at (0.5,3){\strut$j^*$};
		
		\node (a) at (-0.5,0){\strut$a$};
		\node (phi) at (0,1) [draw]{\strut$\phi$};
		
		\node (phid) at (0.5,2)[draw]{\strut$\phi^*$};
		\node (ao) at (1,3){\strut$a$};
		
		\begin{knot}[clip width=4]
		\strand [thick] (i) to (phi);
		\strand [thick] (phi) to (o);
		\strand [thick] (a) to [out=90,in=-110] (phi);
		\strand [thick] (id) to (phid);
		\strand [thick] (phid) to (do);
		\strand [thick] (phid) to [out=-60,in=-90] (ao);
		\end{knot}
		
		\end{tikzpicture}
	}.
	\end{equation}
	The double strand will henceforth be used to denote the identity on $\mathbb{I}_s$, and $\phi^*$ was defined in Equation \eqref{DCphistardef}.

	The left unitor built from evaluation morphisms
	\begin{equation}\label{DCleftunitor}
	\hbox{
		\begin{tikzpicture}[baseline=(current  bounding  box.center)]
	\node (inc) at (0,0.5){\strut$\mathbb{I}_s\otimes_s c$};
	\node (bo) at (0,3){\strut$c$};
	\node (tr) at (0,1.1) {\strut$\bigtriangledown$};
	\coordinate (trd) at (0,1);
	\coordinate (truw) at (-0.15,1.2);
	\coordinate (true) at (0.1,1.2);
	\coordinate (trum) at (-0.1,1.2);
	
	\coordinate (ctrl) at (0.175,2.3);
	
	\begin{knot}[clip width=4,clip radius=3pt]
	\strand [thick] (inc) to (trd);
	\strand [thick,blue] (trum) to [out=120, in=0] (ctrl) to [out=180,in=120] (truw);
	\strand [thick] (true) to [out=60,in =-90] (bo);
	\flipcrossings{2}
	\end{knot}
	\end{tikzpicture}
}
:=
\sum\limits_{i\in\cat{O}(\cat{A})}
\hbox{
	\begin{tikzpicture}[baseline=(current  bounding  box.center)]
	\node (inc) at (0,0.5){\strut$\mathbb{I}_s\otimes_s c$};
	\node (bo) at (0,3){\strut$c$};
	\node (il) at (-0.35,1.4){\strut$i$};
	\node (tr) at (0,1.1) {\strut$\bigtriangledown$};
	\coordinate (trd) at (0,1);
	\coordinate (truw) at (-0.15,1.2);
	\coordinate (true) at (0.1,1.2);
	\coordinate (trum) at (-0.1,1.2);
	
	\coordinate (ctrl) at (0.175,2.3);
	
	\begin{knot}[clip width=4,clip radius = 3pt]
	\strand [thick] (inc) to (trd);
	\strand [thick] (trum) to [out=115, in=0] (ctrl) to [out=180,in=120] (truw);
	\strand [thick] (true) to [out=60,in =-90] (bo);
	\flipcrossings{2}
	\end{knot}
	\end{tikzpicture}
},
\end{equation}
where the double strand coming out of the inclusion on the left hand side denotes the object $\mathbb{I}_s$. The right unitor is obtained by reflecting the above diagram in a vertical line.
The inverse for the left unitor is given by:
\begin{equation}\label{DCunitorinverses}
\hbox{
	\begin{tikzpicture}[baseline=(current  bounding  box.center)]
	\node (inc) at (0,2){\strut$\mathbb{I}_s\otimes_s c$};
	\node (bo) at (0,-0.7){\strut$c$};

	\node (tr1n) at (0,1.2) {\strut$\bigtriangleup$};
	\coordinate (trd) at (0,1.35);
	\coordinate (truw) at (-0.15,1.12);
	\coordinate (true) at (0.1,1.12);
	\coordinate (trum) at (-0.1,1.12);
	
	\coordinate (ctrl) at (0.4,0.0);
	
	\begin{knot}[clip width=4, clip radius = 3pt]
	\strand [thick] (inc) to (trd);
	\strand [thick,blue] (trum) to [out=-120, in=0] (ctrl) to [out=180,in=-120] (truw);
	\strand [thick] (true) to [out=-55,in =90] (bo);
	\flipcrossings{2}
	\end{knot}
	\end{tikzpicture}
}
:=
\sum\limits_{i\in\cat{O}(\cat{A})}\frac{d_i}{D}
\hbox{
	\begin{tikzpicture}[baseline=(current  bounding  box.center)]
	\node (inc) at (0,2){\strut$\mathbb{I}_s\otimes_s c$};
	\node (bo) at (0,-0.7){\strut$c$};
	
	\node (il) at (-0.35,0.9){\strut$i$};
	
	\node (tr1n) at (0,1.2) {\strut$\bigtriangleup$};
	\coordinate (trd) at (0,1.35);
	\coordinate (truw) at (-0.15,1.12);
	\coordinate (true) at (0.1,1.12);
	\coordinate (trum) at (-0.1,1.12);
	
	\coordinate (ctrl) at (0.4,0.0);
	
	\begin{knot}[clip width=4,clip radius = 3pt]
	\strand [thick] (inc) to (trd);
	\strand [thick] (trum) to [out=-115, in=0] (ctrl) to [out=180,in=-120] (truw);
	\strand [thick] (true) to [out=-55,in =90] (bo);
	\flipcrossings{2}
	\end{knot}
	\end{tikzpicture}
},
\end{equation}
and the inverse for the right unitor is again given by reflecting in a vertical line. The symmetry for $\otimes_s$ is induced from the symmetry in $\cat{A}$ as follows:
\begin{equation}\label{DCsymmorph}
\hbox{
	\begin{tikzpicture}[baseline=(current  bounding  box.center)]
	
	\node (cf) at (0,-0.5){\strut$c\otimes_s c'$};

	\node (tr) at (0,0.1) {\strut$\bigtriangledown$};
	\coordinate (trd) at (0,0);
	\coordinate (truw) at (-0.1,00.2);
	\coordinate (true) at (0.1,0.2);

	\node (tr1) at (0,1.2) {\strut$\bigtriangleup$};
	\coordinate (trd1w) at (-0.1,1.12);
	\coordinate (trd1e) at (0.1,1.12);
	\coordinate (tru1) at (0,1.35);
	\node (c1) at (0,2) {\strut$c'\otimes_s c$};

	\begin{knot}[clip width=4]
	\strand [thick] (cf) to (trd);
	\strand [thick] (true) to [out=70, in=-110] (trd1w);
	\strand [thick] (tru1) to (c1);
	\end{knot}
	\draw [thick] (truw) to [out=110,in=-70] (trd1e);
	\end{tikzpicture}
}.
\end{equation}
\end{df}

\subsubsection{Properties of the Symmetric Tensor Product}
We will need the following properties of the idempotent from Equation \eqref{DCidempotent} and the symmetric tensor product. We will refer to these properties as cloaking, slicing and snapping.

\begin{lem}[Cloaking]\label{DCcloaking}
	Let $c,c'\in\dcentcat{A}$ and $a\in \cat{A}$. Then the following identity holds:
	$$
	\hbox{
		\begin{tikzpicture}[baseline=(current  bounding  box.center)]
		\coordinate (west) at (-0.5,-0.3);
		\coordinate (north) at (0,0);
		\coordinate (east) at (0.5,-0.3);
		\coordinate (south) at (0,-0.6);
		\node (a) at (-0.75,-1.5) {\strut$a$};
		\node (b) at (-0.25,-1.5) {\strut$c$};
		\node (c) at (0.25,-1.5) {\strut$c'$};
		\coordinate (ao) at (0.75,1);
		\coordinate (bo) at (-0.25,1);
		\coordinate (co) at (0.25,1);
		
		\coordinate (lc) at (-0.75,-0.2);

		\begin{knot}[clip width=4]
		\strand [blue, thick] (west)
		to [out=90,in=-180] (north)
		to [out=0,in=90] (east)
		to [out=-90,in=0] (south)
		to [out=-180,in=-90] (west);
		\strand [thick] (a) to [out=90,in=-90] (lc) to [out=90,in=-90] (ao);
		\strand [thick] (b) to (bo);
		\strand [thick] (c) to (co);
		\flipcrossings{5,1,4}
		\end{knot}
		\end{tikzpicture}
	}=
	\hbox{
		\begin{tikzpicture}[baseline=(current  bounding  box.center)]
		\coordinate (west) at (-0.5,-0.3);
		\coordinate (north) at (0,0);
		\coordinate (east) at (0.5,-0.3);
		\coordinate (south) at (0,-0.6);
		\node (a) at (-0.75,-2) {\strut$a$};
		\node (b) at (-0.25,-2) {\strut$c$};
		\node (c) at (0.25,-2) {\strut$c'$};
		\coordinate (ao) at (0.75,0.5);
		\coordinate (bo) at (-0.25,0.5);
		\coordinate (co) at (0.25,0.5);
		
		\coordinate (rc) at (0.75,-0.4);

		\begin{knot}[clip width=4]
		\strand [blue, thick] (west)
		to [out=90,in=-180] (north)
		to [out=0,in=90] (east)
		to [out=-90,in=0] (south)
		to [out=-180,in=-90] (west);
		\strand [thick] (a) to [out=90,in=-90] (rc) to [out=90,in=-90] (ao);
		\strand [thick] (b) to (bo);
		\strand [thick] (c) to (co);
		\flipcrossings{6,1,4}
		\end{knot}
		\end{tikzpicture}
	}.
	$$
\end{lem}

\begin{proof}
	See \cite{Wasserman2017}.
\end{proof}

\begin{lem}[Slicing]\label{DCslicing}
	The half-braiding on $c\otimes_s c'$ and the inclusion and projection maps for $\Pi_{c,c'}$ interact as follows:
	$$
	\hbox{
		\begin{tikzpicture}[baseline=(current  bounding  box.center)]
		
		\coordinate (c) at (-0.6,1);
		\coordinate (a) at (-0.3,1);
		\coordinate (b) at (0.3,1);
		
		\coordinate (ctrl) at (-0.6,1.9);
		
		\node (tr1) at (0,1.8) {\strut$\bigtriangleup$};
		\coordinate (trd1w) at (-0.1,1.72);
		\coordinate (trd1e) at (0.1,1.72);
		\coordinate (tru1) at (0,1.95);
		\coordinate (c1) at (0,3.2);
		
		\coordinate (co) at (0.6,3.2);
		
		\begin{knot}[clip width=4]
		\strand [thick] (c) to [out=90,in=-90] (ctrl) to [out=90,in=-90] (co);
		\strand [thick] (b) to [out=90, in=-70] (trd1e);
		\strand [thick] (tru1) to (c1);
		\strand [thick] (a) to [out=90,in=-110] (trd1w);
		\flipcrossings{1}
		\end{knot}
		\end{tikzpicture}
	}
	=
	\hbox{
		\begin{tikzpicture}[baseline=(current  bounding  box.center)]
		
		\coordinate (c) at (-0.6,0.8);
		\coordinate (a) at (-0.3,0.8);
		\coordinate (b) at (0.3,0.8);
		
		\coordinate (ctrl) at (0.6,2.3);
		
		\node (tr1) at (0,2.2) {\strut$\bigtriangleup$};
		\coordinate (trd1w) at (-0.1,2.12);
		\coordinate (trd1e) at (0.1,2.12);
		\coordinate (tru1) at (0,2.35);
		\coordinate (c1) at (0,3);
		
		\coordinate (co) at (0.6,3);
		
		\begin{knot}[clip width=4]
		\strand [thick] (c) to [out=90,in=-90] (ctrl) to [out=90,in=-90] (co);
		\strand [thick] (b) to [out=90, in=-70] (trd1e);
		\strand [thick] (tru1) to (c1);
		\strand [thick] (a) to [out=90,in=-110] (trd1w);
		\flipcrossings{1}
		\end{knot}
		\end{tikzpicture}
	}
	=
	\hbox{
		\begin{tikzpicture}[baseline=(current  bounding  box.center)]
		
		\coordinate (c) at (-0.6,0.8);
		\coordinate (a) at (-0.3,0.8);
		\coordinate (b) at (0.3,0.8);
		
		\coordinate (ctrl) at (0.6,2.3);
		
		\node (tr1) at (0,2.2) {\strut$\bigtriangleup$};
		\coordinate (trd1w) at (-0.1,2.12);
		\coordinate (trd1e) at (0.1,2.12);
		\coordinate (tru1) at (0,2.35);
		\coordinate (c1) at (0,3);
		
		\coordinate (co) at (0.6,3);
		
		\begin{knot}[clip width=4]
		\strand [thick] (c) to [out=90,in=-90] (ctrl) to [out=90,in=-90] (co);
		\strand [thick] (b) to [out=90, in=-70] (trd1e);
		\strand [thick] (tru1) to (c1);
		\strand [thick] (a) to [out=90,in=-110] (trd1w);
		\flipcrossings{2}
		\end{knot}
		\end{tikzpicture}
	}
	$$
	and
	$$
	\rotatebox{180}{
		\begin{tikzpicture}[baseline=(current  bounding  box.center)]
		
		\coordinate (c) at (-0.6,1);
		\coordinate (a) at (-0.3,1);
		\coordinate (b) at (0.3,1);
		
		\coordinate (ctrl) at (-0.6,1.9);
		
		\node (tr1) at (0,1.8) {\strut$\bigtriangleup$};
		\coordinate (trd1w) at (-0.1,1.72);
		\coordinate (trd1e) at (0.1,1.72);
		\coordinate (tru1) at (0,1.95);
		\coordinate (c1) at (0,3.2);
		
		\coordinate (co) at (0.6,3.2);
		
		\begin{knot}[clip width=4]
		\strand [thick] (c) to [out=90,in=-90] (ctrl) to [out=90,in=-90] (co);
		\strand [thick] (b) to [out=90, in=-70] (trd1e);
		\strand [thick] (tru1) to (c1);
		\strand [thick] (a) to [out=90,in=-110] (trd1w);
		\flipcrossings{1}
		\end{knot}
		\end{tikzpicture}
	}
	=
	\rotatebox{180}{
		\begin{tikzpicture}[baseline=(current  bounding  box.center)]
		
		\coordinate (c) at (-0.6,0.8);
		\coordinate (a) at (-0.3,0.8);
		\coordinate (b) at (0.3,0.8);
		
		\coordinate (ctrl) at (0.6,2.3);
		
		\node (tr1) at (0,2.2) {\strut$\bigtriangleup$};
		\coordinate (trd1w) at (-0.1,2.12);
		\coordinate (trd1e) at (0.1,2.12);
		\coordinate (tru1) at (0,2.35);
		\coordinate (c1) at (0,3);
		
		\coordinate (co) at (0.6,3);
		
		\begin{knot}[clip width=4]
		\strand [thick] (c) to [out=90,in=-90] (ctrl) to [out=90,in=-90] (co);
		\strand [thick] (b) to [out=90, in=-70] (trd1e);
		\strand [thick] (tru1) to (c1);
		\strand [thick] (a) to [out=90,in=-110] (trd1w);
		\flipcrossings{1}
		\end{knot}
		\end{tikzpicture}
	}
	=
	\rotatebox{180}{
		\begin{tikzpicture}[baseline=(current  bounding  box.center)]
		
		\coordinate (c) at (-0.6,0.8);
		\coordinate (a) at (-0.3,0.8);
		\coordinate (b) at (0.3,0.8);
		
		\coordinate (ctrl) at (0.6,2.3);
		
		\node (tr1) at (0,2.2) {\strut$\bigtriangleup$};
		\coordinate (trd1w) at (-0.1,2.12);
		\coordinate (trd1e) at (0.1,2.12);
		\coordinate (tru1) at (0,2.35);
		\coordinate (c1) at (0,3);
		
		\coordinate (co) at (0.6,3);
		
		\begin{knot}[clip width=4]
		\strand [thick] (c) to [out=90,in=-90] (ctrl) to [out=90,in=-90] (co);
		\strand [thick] (b) to [out=90, in=-70] (trd1e);
		\strand [thick] (tru1) to (c1);
		\strand [thick] (a) to [out=90,in=-110] (trd1w);
		\flipcrossings{2}
		\end{knot}
		\end{tikzpicture}
	},
	$$
	where the diagonal strand is labelled by an object of $\cat{A}$.
\end{lem}

\begin{proof}
	See \cite{Wasserman2017}.
\end{proof}

\begin{lem}[Snapping]\label{DCsnapping}
	For any $c\in\dcentcat{A}$ we have:
	$$
	\hbox{
		\begin{tikzpicture}[baseline=(current  bounding  box.center)]
		\node (i) at (0,0) {\strut$\mathbb{I}_s$};
		\coordinate (ip) at (-0.1,0.27);
		\coordinate (o) at (0,3);
		\coordinate (op) at (-0.1,3);
		
		\node (a) at (0.3,0){\strut$c$};
		\coordinate (ao) at (0.3,3);
		
		\coordinate (west) at (-0.4,1.6);
		\coordinate (north) at (0.2,1.8);
		\coordinate (east) at (0.5,1.6);
		\coordinate (south) at (0.2,1.4);

		\begin{knot}[clip width=4,clip radius =3pt]
		\strand [thick,blue] (i) to (o);
		\strand [thick,blue] (ip) to (op);
		\strand [thick] (a) to [out=90,in=-90] (ao);
		\strand [blue, thick] (west) to [out=90,in=-180] (north) to [out=0,in=90] (east) to [out=-90,in=0] (south) to [out=-180,in=-90] (west);
		\flipcrossings{2,4,5}
		\end{knot}
		\end{tikzpicture}
	}
	=
	\hbox{
		\begin{tikzpicture}[baseline=(current  bounding  box.center)]
		\node (i) at (0,0) {\strut$\mathbb{I}_s$};
		\coordinate (ip) at (-0.1,0.27);
		\coordinate (o) at (0,3);
		\coordinate (op) at (-0.1,3);
		
		\node (a) at (0.3,0){\strut$c$};
		\coordinate (ao) at (0.3,3);
		
		\coordinate (uc) at (0.5,2);
		\coordinate (lc) at (0.5,1.2);
		
		\begin{knot}[clip width=4,clip radius=4pt]
		\strand [thick,blue] (i) to [out=90,in=-90] (lc) to [out=90,in=90] (ip);
		\strand [thick,blue] (o) to [out=-90,in=90] (uc) to [out=-90,in=-90] (op);
		\strand [thick] (a) to [out=90,in=-90] (ao);
		\flipcrossings{2,4,5}
		\end{knot}
		\end{tikzpicture}
	}.
	$$
\end{lem}

\begin{proof}
	See \cite{Wasserman2017}.
\end{proof}

\subsection{2-fold Monoidal Categories}\label{DClax2foldsect}

The following definition is inspired by \cite{Balteanu1998}, but allows for the units of the two monoidal structures to be different.
\begin{df}\label{DClax2fold}
	Let $\cat{C}$ be a category equipped with two monoidal structures $\otimes_1$ and $\otimes_2$, with units $\mathbb{I}_1$ and $\mathbb{I}_2$, respectively. The associator and right and left unitor isomorphisms for monoidal structures will be denoted $\alpha_1,\rho_1,\lambda_1$ and $\alpha_2,\rho_2,\lambda_2$, respectively. Then $(\cat{C},\otimes_1,\otimes_2)$ is called \emph{lax 2-fold monoidal} if it comes equipped with a natural transformation $\eta$ with components
	$$
	\eta_{c,c',d,d'}\colon  (c\otimes_1 c')\otimes_2 (d\otimes_1 d')\rar (c\otimes_2 d)\otimes_1(c'\otimes_2 d'),
	$$
	and morphisms 
	\begin{align*}
	u_0\colon &\mathbb{I}_2\rar \mathbb{I}_1\\
	u_1\colon &\mathbb{I}_1\otimes_2\mathbb{I}_1\rar \mathbb{I}_1\\
	u_2\colon &\mathbb{I}_2\rar\mathbb{I}_2\otimes_1\mathbb{I}_2.
	\end{align*}
	We will refer to these morphisms as \emph{compatibility morphisms}.
	These morphisms are	such that the following diagrams commute for all $c,c',d,d'\in\cat{C}$.
	
	\begin{enumerate}[(a)]
		\item\label{DCunitor1}
		\begin{tikzcd}
			\mathbb{I}_2 \otimes_1 \mathbb{I}_1\arrow[r,"u_0\otimes \tn{id}"]\arrow[d,"\rho_1"] & \mathbb{I}_1\otimes_1 \mathbb{I}_1\arrow[d,"\rho_1"]\\
			\mathbb{I}_2 \arrow[r,"u_0"] & \mathbb{I}_1
		\end{tikzcd}, 
		
		together with the corresponding diagram for $\lambda_1$, and the corresponding diagrams for $\lambda_2$ and $\rho_2$,

		\item \label{DCunitor4}
		\begin{tikzcd}
			\mathbb{I}_2\otimes_2(d\otimes_1d')	\arrow[d,"u_2\otimes_2\tn{id}"]	\arrow[r,"\lambda_2"]	&	d\otimes_1d'\\
			(\mathbb{I}_2\otimes_1\mathbb{I}_2)\otimes_2(d\otimes_1d')				\arrow[r,"\eta"]		&	(\mathbb{I}_2\otimes_2d)\otimes_1(\mathbb{I}_2\otimes_2d')	\arrow[u,"\lambda_2\otimes_2\lambda_2"],
		\end{tikzcd}
		
		where $\lambda_2$ is the left-unitor for $\otimes_2$. We similarly require the corresponding diagrams for the right-unitor to commute.
		\item \label{DCunitor3}
		\begin{tikzcd}
			(\mathbb{I}_1\otimes_1c')\otimes_2(\mathbb{I}_1\otimes_1 d')	\arrow[r,"\eta"]\arrow[d,"\lambda_1\otimes_2\lambda_1"]		&	(\mathbb{I}_1\otimes_2\mathbb{I}_1)\otimes_1(c'\otimes_2 d')\arrow[d,"u_1\otimes_2 \tn{id}"]\\
			c'\otimes_2 d' 	&	\mathbb{I}_1\otimes_1(c'\otimes_2d'), \arrow[l,"\lambda_1"]
		\end{tikzcd}
		
		where $\lambda_1$ denotes the left-unitor for $\otimes_1$, and the corresponding diagram for the right-unitor is also required to commute.
		
		\item \label{DCunitor2}
		\begin{tikzcd}
			(\mathbb{I}_1\otimes_2 \mathbb{I}_1)\otimes_2 \mathbb{I}_1 \arrow[rr,"\alpha_2"]	\arrow[d,"u_1\otimes_2\tn{id}"]& &\mathbb{I}_1\otimes_2(\mathbb{I}_1\otimes_2 \mathbb{I}_1)\arrow[d,"\tn{id}\otimes_2 u_1"]\\
			\mathbb{I}_1 \otimes_2 \mathbb{I}_1 \arrow[r, "u_1"] & \mathbb{I}_1& \mathbb{I}_1 \otimes_2 \mathbb{I}_1\arrow[l,"u_1"'],
		\end{tikzcd}
		
		and the corresponding diagram for $u_2$ and $\alpha_1$,

		\item \label{DCtensor1}
		\begin{tikzcd}
			((c\otimes_1d)\otimes_2(c'\otimes_1d'))\otimes_2(c''\otimes_1d'')	\arrow[r,"\alpha_2"]\arrow[d,"\eta\otimes_2 \tn{id}"] 	&	(c\otimes_1d)\otimes_2((c'\otimes_1d')\otimes_2(c''\otimes_1d''))	\arrow[d,"\tn{id}\otimes_2\eta"]\\
			((c\otimes_2c')\otimes_1(d\otimes_2d'))\otimes_2(c''\otimes_1d'')	\arrow[d,"\eta"]	&	(c\otimes_1 d)\otimes_2((c'\otimes_2c'')\otimes_1(d'\otimes_2d''))	\arrow[d,"\eta"]\\
			((c\otimes_2c')\otimes_2c'')\otimes_1((d\otimes_2d')\otimes_2d'') \arrow[r,"\alpha_2\otimes_1\alpha_2"]	&	(c\otimes_2(c'\otimes_2c''))\otimes_1(d\otimes_2(d'\otimes_2d'')),
		\end{tikzcd}
		where $\alpha_2$ denotes the associator for $\otimes_2$.
		\item \label{DCtensor2}
		\begin{tikzcd}
			((c\otimes_1c')\otimes_1c'')\otimes_2((d\otimes_1d')\otimes_1d'')\arrow[r,"\alpha_1\otimes_2\alpha_1"] \arrow[d,"\eta"]	&	(c\otimes_1(c'\otimes_1c''))\otimes_2(d\otimes_1(d'\otimes_1d''))\arrow[d,"\eta"]\\
			((c\otimes_1c')\otimes_2(d\otimes_1d'))\otimes_1(c''\otimes_2d'')\arrow[d,"\eta"]	&	(c\otimes_2d)\otimes_1((c'\otimes_1c'')\otimes_2(d'\otimes_1d'')) \arrow[d,"\eta"]\\
			((c\otimes_2d)\otimes_1(c'\otimes_2d'))\otimes_1 (c''\otimes_2d'')	\arrow[r,"\alpha_1"]	&	(c\otimes_2 d)\otimes_1((c'\otimes_2d')\otimes_1(c''\otimes_2d'')),
		\end{tikzcd}
		where $\alpha_1$ denotes the associator for $\otimes_1$.
	\end{enumerate}
\end{df}

\begin{df}\label{DCbilaxdef}
	Reversing the direction of the compatibility morphisms in Definition \ref{DClax2fold} yields the notion of \emph{oplax 2-fold monoidal category}\footnote{This also corresponds to switching the roles of $\otimes_1$ and $\otimes_2$.}. A category that is both lax and oplax 2-fold monoidal will be called \emph{bilax 2-fold monoidal}. 
	
	If $\cat{C}$ is a bilax 2-fold monoidal category with lax compatibility morphisms $\eta,u_0,u_1,u_2$ and $\zeta,v_0,v_1,v_2$ that satisfy
	$$
	\eta\circ\zeta = \tn{id},\quad u_0\circ v_0=\tn{id}, \quad u_1\circ v_2=\tn{id},\quad v_1\circ u_2=\tn{id},
	$$
	then $\cat{C}$ will be called an \emph{inclusive bilax 2-fold monoidal category}. If, additionally, $u_1$ and $v_2$ are isomorphisms, we will call $\cat{C}$ \emph{strongly inclusive}.
\end{df}

The notion of bilax 2-fold monoidal is known in the community. The notion of (strong) inclusivity is introduced here to capture the structure $\dcentcat{A}$ has. 

\begin{df}\label{DCbraideddef}
	Let $(\cat{C},\otimes_1,\otimes_2)$ be lax 2-fold monoidal and let $\beta_1$ be a braiding for $\otimes_1$. Then $\cat{C}$ is called \emph{horizontally braided lax 2-fold monoidal} if the braiding is such that the following diagrams commute:
	\begin{enumerate}[(a)]
		\item \label{DCu2braid}
		\begin{tikzcd}
			\mathbb{I}_2 \arrow[r,"u_2"] \arrow[d,"u_2"] & \mathbb{I}_2\otimes_1\mathbb{I}_2 \arrow[dl,"\beta_1"]\\
			\mathbb{I}_2\otimes_1\mathbb{I}_2 &,
		\end{tikzcd}
		\item 
		\begin{tikzcd}\label{DCetabraid}
			(c\otimes_1 c')\otimes_2 (d\otimes_1 d')\arrow[r,"\eta"]\arrow[d,"\beta_1\otimes_2\beta_1"] &(c\otimes_2 d)\otimes_1(c'\otimes_2 d')\arrow[d,"\beta_1"]\\
			(c'\otimes_1 c)\otimes_2 (d'\otimes_1 d)\arrow[r,"\eta"] &(c'\otimes_2 d')\otimes_1(c\otimes_2 d)
		\end{tikzcd}.
	\end{enumerate}
	If $\beta_2$ is a braiding for $\otimes_2$ and satisfies the analogous compatibility with $\eta$ and $u_1$, we call $\cat{C}$ \emph{vertically braided}. If $\cat{C}$ is both horizontally and vertically braided we will refer to it as \emph{braided lax 2-fold monoidal}. If, in a braided lax 2-fold monoidal category, the horizontal (resp. vertical) braiding is symmetric the category will be called \emph{horizontally (reps. vertically) symmetric}.
	
	If $\cat{C}$ is additionally a bilax 2-fold monoidal category, and the horizontal or vertical braiding satisfies the corresponding compatibility with the oplax compatibility morphisms, $\cat{C}$ will be called \emph{(horizontally or vertically) braided bilax 2-fold monoidal}.
\end{df}

\begin{df}\label{DClaxtensordef}
	When a lax 2-fold monoidal category $\cat{C}$ is enriched and tensored over $\Vect$, and the monoidal structures are tensor structures, we will call $\cat{C}$ a \emph{lax 2-fold tensor category}.
\end{df}

\section{The Drinfeld Centre as a Lax 2-Fold Monoidal Category}\label{DCdcaslax2foldsect}
This section is devoted to proving the main theorem of this article:

\begin{thm}\label{DCdc2foldmon}
	Let $\cat{A}$ be a symmetric fusion category over an algebraically closed field of characteristic zero and $\dcentcat{A}$ its Drinfeld centre. Denote by $\otimes_c$ and $\otimes_s$ its usual (convolution) and its symmetric tensor product (Definition \ref{DCsymtensobj}) on $\dcentcat{A}$, respectively. Then $(\dcentcat{A},\otimes_c,\otimes_s)$ is a vertically symmetric braided strongly inclusive bilax 2-fold tensor category, cf. Definitions \ref{DClax2fold}, \ref{DCbilaxdef}, \ref{DCbraideddef} and \ref{DClaxtensordef}.
\end{thm}

The structure of this section is as follows. We will first define the compatibility morphisms from Definition \ref{DClax2fold}, we will denote these by $(\eta,u_0,u_1,u_2)$ for the lax direction and $(\zeta, v_0,v_1,v_2)$ for the oplax direction. We will then proceed to check their coherence, combining the necessary proofs for the two cases whenever possible.

\subsection{Lax Compatibility Morphisms}
Our first order of business is to define the lax compatibility morphism for $\otimes_s$ and $\otimes_c$.

\subsubsection{The Comparison Morphism}
The following lemma allows us to define $\eta$ and $\zeta$. 
\begin{lem}
	Let $c,c',d,d'\in\dcentcat{A}$, then the following string diagrams define morphisms in $\dcentcat{A}$:
	\begin{equation}\label{DClaxproj}
	\zeta_{c,d,c',d'}=
	\hbox{
		\begin{tikzpicture}[baseline=(current  bounding  box.center)]
		
		\node (cf) at (-0.7,-0.5){\strut$c\otimes_s d$};
		\node (cf2) at (0.7,-0.5){\strut$c'\otimes_s d'$};
		
		\node (tr) at (-0.7,0.1) {\strut$\bigtriangledown$};
		\coordinate (trd) at (-0.7,0);
		\coordinate (truw) at (-0.8,00.2);
		\coordinate (true) at (-0.6,0.2);
		
		\node (tr2) at (0.7,0.1) {\strut$\bigtriangledown$};
		\coordinate (trd2) at (0.7,0);
		\coordinate (truw2) at (0.6,00.2);
		\coordinate (true2) at (0.8,0.2);

		\node (tr1) at (0,1.2) {\strut$\bigtriangleup$};
		\coordinate (trd1ww) at (-0.13,1.12);
		\coordinate (trd1e) at (0.1,1.12);
		\coordinate (trd1w) at (-0.1,1.12);
		\coordinate (trd1ee) at (0.13,1.12);
		\coordinate (tru1) at (0,1.35);
		\node (c1) at (0,2) {\strut$(c\otimes_c c')\otimes_s(d\otimes_cd')$};

		\begin{knot}[clip width=4]
		\strand [thick] (cf) to (trd);
		\strand [thick] (cf2) to (trd2);
		\strand [thick] (true) to [out=70, in=-70] (trd1e);
		\strand [thick] (true2) to [out=70, in=-70] (trd1ee);
		\strand [thick] (tru1) to (c1);
		\strand [thick] (truw) to [out=110,in=-110] (trd1ww);
		\end{knot}
		\draw [thick] (truw2) to [out=110,in=-110] (trd1w);
		\end{tikzpicture}
	}
	\tn{ and }
	\eta_{c,c',d,d'}=
	\hbox{
		\begin{tikzpicture}[baseline=(current  bounding  box.center)]
		
		\node (cf) at (0,0.5){\strut$(c\otimes_c c')\otimes_s (d\otimes_c d')$};

		\node (tr) at (0,1.1) {\strut$\bigtriangledown$};
		\coordinate (trd) at (0,1);
		\coordinate (truww) at (-0.13,1.2);
		\coordinate (truw) at (-0.1,1.2);
		\coordinate (true) at (0.1,1.2);
		\coordinate (truee) at (0.13,1.2);

		\node (tr1) at (-0.7,2.2) {\strut$\bigtriangleup$};
		\coordinate (trd1w) at (-0.8,2.12);
		\coordinate (trd1e) at (-0.6,2.12);
		\coordinate (tru1) at (-0.7,2.35);
		\node (c1) at (-0.7,3) {\strut$c\otimes_s d$};
		
		\node (tr2) at (0.7,2.2) {\strut$\bigtriangleup$};
		\coordinate (trd2w) at (0.6,2.12);
		\coordinate (trd2e) at (0.8,2.12);
		\coordinate (tru2) at (0.7,2.35);
		\node (c2) at (0.7,3) {\strut$c'\otimes_s d'$};

		\begin{knot}[clip width=4]
		\strand [thick] (cf) to (trd);
		\strand [thick] (true) to [out=70, in=-70] (trd1e);
		\strand [thick] (truee) to [out=70, in=-70] (trd2e);
		\strand [thick] (tru2) to (c2);
		\strand [thick] (tru1) to (c1);
		\strand [thick] (truww) to [out=110,in=-110] (trd1w);
		\end{knot}
		\draw [thick] (truw) to [out=110,in=-110] (trd2w);
		\end{tikzpicture}
	},
	\end{equation}
	respectively. Here the unresolved crossing denotes the symmetry in $\cat{A}$ between the underlying objects in $\cat{A}$ of $d$ and $c'$, c.f. Section \ref{DCsetupsect}. These morphisms exhibit the object $(c\otimes_s d)\otimes_c(c'\otimes_sd')$ as a subobject of $(c\otimes_c c')\otimes_s(d\otimes_c d')$ with inclusion $\zeta_{c,d,c',d'}$ and projection $\eta_{c,c',d,d'}$.
\end{lem}

\begin{proof}
	We have to show that the composite along $(c\otimes_c c')\otimes_s (d\otimes_c d')$ of the two maps is the identity, and that they define morphisms in $\dcentcat{A}$. For the former:
	$$
	\hbox{
		\begin{tikzpicture}[baseline=(current  bounding  box.center)]
		
		\node (cf) at (-0.7,0.5){\strut$c\otimes_s d$};
		\node (cf2) at (0.7,0.5){\strut$c'\otimes_s d'$};
		
		\node (tr) at (-0.7,1.1) {\strut$\bigtriangledown$};
		\coordinate (trd) at (-0.7,1);
		\coordinate (truw) at (-0.8,1.2);
		\coordinate (true) at (-0.6,1.2);
		
		\node (tr2) at (0.7,1.1) {\strut$\bigtriangledown$};
		\coordinate (trd2) at (0.7,1);
		\coordinate (truw2) at (0.6,1.2);
		\coordinate (true2) at (0.8,1.2);

		\node (tr1) at (0,2.2) {\strut$\bigtriangleup$};
		\coordinate (trd1ww) at (-0.13,2.12);
		\coordinate (trd1e) at (0.1,2.12);
		\coordinate (trd1w) at (-0.1,2.12);
		\coordinate (trd1ee) at (0.13,2.12);
		\coordinate (tru1) at (0,2.35);

		\node (tru) at (0,3.1) {\strut$\bigtriangledown$};
		\coordinate (trdu) at (0,3);
		\coordinate (truwwu) at (-0.13,3.2);
		\coordinate (truwu) at (-0.1,3.2);
		\coordinate (trueu) at (0.1,3.2);
		\coordinate (trueeu) at (0.13,3.2);

		\node (tr1u) at (-0.7,4.2) {\strut$\bigtriangleup$};
		\coordinate (trd1wu) at (-0.8,4.12);
		\coordinate (trd1eu) at (-0.6,4.12);
		\coordinate (tru1u) at (-0.7,4.35);
		\node (c1) at (-0.7,5) {\strut$c\otimes_s d$};
		
		\node (tr2u) at (0.7,4.2) {\strut$\bigtriangleup$};
		\coordinate (trd2wu) at (0.6,4.12);
		\coordinate (trd2eu) at (0.8,4.12);
		\coordinate (tru2u) at (0.7,4.35);
		\node (c2) at (0.7,5) {\strut$c'\otimes_s d'$};
		
		\begin{knot}[clip width=4]
		\strand [thick] (cf) to (trd);
		\strand [thick] (cf2) to (trd2);
		\strand [thick] (true) to [out=70, in=-70] (trd1e);
		\strand [thick] (true2) to [out=70, in=-70] (trd1ee);
		\strand [thick] (truw) to [out=110,in=-110] (trd1ww);
		\strand [thick] (tru1) to (trdu);
		\strand [thick] (trueu) to [out=70, in=-70] (trd1eu);
		\strand [thick] (trueeu) to [out=70, in=-70] (trd2eu);
		\strand [thick] (tru2u) to (c2);
		\strand [thick] (tru1u) to (c1);
		\strand [thick] (truwwu) to [out=110,in=-110] (trd1wu);
		\end{knot}
		\draw [thick] (truw2) to [out=110,in=-110] (trd1w);
		\draw [thick] (truwu) to [out=110,in=-110] (trd2wu);
		\end{tikzpicture}
	}
	=
	\hbox{
		\begin{tikzpicture}[baseline=(current  bounding  box.center)]
		
		\node (cf) at (-0.7,0.5){\strut$c\otimes_s d$};
		\node (cf2) at (0.7,0.5){\strut$c'\otimes_s d'$};
		
		\node (tr) at (-0.7,1.1) {\strut$\bigtriangledown$};
		\coordinate (trd) at (-0.7,1);
		\coordinate (truw) at (-0.8,1.2);
		\coordinate (true) at (-0.6,1.2);
		
		\node (tr2) at (0.7,1.1) {\strut$\bigtriangledown$};
		\coordinate (trd2) at (0.7,1);
		\coordinate (truw2) at (0.6,1.2);
		\coordinate (true2) at (0.8,1.2);

		\coordinate (trd1e) at (0.1,1.82);
		\coordinate (trd1w) at (-0.1,1.82);

		\coordinate (truwu) at (-0.1,3.5);
		\coordinate (trueu) at (0.1,3.5);

		\node (tr1u) at (-0.7,4.2) {\strut$\bigtriangleup$};
		\coordinate (trd1wu) at (-0.8,4.12);
		\coordinate (trd1eu) at (-0.6,4.12);
		\coordinate (tru1u) at (-0.7,4.35);
		\node (c1) at (-0.7,5) {\strut$c\otimes_s d$};
		
		\node (tr2u) at (0.7,4.2) {\strut$\bigtriangleup$};
		\coordinate (trd2wu) at (0.6,4.12);
		\coordinate (trd2eu) at (0.8,4.12);
		\coordinate (tru2u) at (0.7,4.35);
		\node (c2) at (0.7,5) {\strut$c'\otimes_s d'$};

		\coordinate (west) at (-1.1,2.7);
		\coordinate (north) at (0,3);
		\coordinate (east) at (1.2,2.7);
		\coordinate (south) at (0,2.4);
		
		\begin{knot}[clip width=4,clip radius=2pt]
		\strand [blue, thick] (west) to [out=90,in=-180] (north) to [out=0,in=90] (east) to [out=-90,in=0] (south) to [out=-180,in=-90] (west);
		\strand [thick] (cf) to (trd);
		\strand [thick] (cf2) to (trd2);
		\strand [thick] (true) to [out=70, in=-90] (trd1e) to (trueu) to [out=90, in=-70] (trd1eu);
		\strand [thick] (true2) to [out=90, in=-90] (trd2eu);
		\strand [thick] (truw) to [out=90,in=-90] (trd1wu);
		\strand [thick] (tru2u) to (c2);
		\strand [thick] (tru1u) to (c1);
		\strand [thick] (trd1w) to (truwu);
		\flipcrossings{5,7,2,4}
		\end{knot}
		\draw [thick] (truw2) to [out=110,in=-90] (trd1w);
		\draw [thick] (truwu) to [out=90,in=-110] (trd2wu);
		\end{tikzpicture}
	},
	$$
	where we have replaced a projection followed by an inclusion with the idempotent from Lemma \ref{DCidempotent}, cf. Equation \eqref{DCinclprojprops}. We can now pull the top of the ring up and the bottom of the ring down, using the way the unresolved and resolved crossings interact, see Equation \eqref{DCcrossinginteraction}, to get:
	$$
\hbox{
	\begin{tikzpicture}[baseline=(current  bounding  box.center)]
	
	\node (cf) at (-0.7,0.5){\strut$c\otimes_s d$};
	\node (cf2) at (0.7,0.5){\strut$c'\otimes_s d'$};
	
	\node (tr) at (-0.7,1.1) {\strut$\bigtriangledown$};
	\coordinate (trd) at (-0.7,1);
	\coordinate (truw) at (-0.8,1.2);
	\coordinate (true) at (-0.6,1.2);
	
	\node (tr2) at (0.7,1.1) {\strut$\bigtriangledown$};
	\coordinate (trd2) at (0.7,1);
	\coordinate (truw2) at (0.6,1.2);
	\coordinate (true2) at (0.8,1.2);

	\coordinate (trd1e) at (0.1,2.5);
	\coordinate (trd1w) at (-0.1,2.5);

	\coordinate (truwu) at (-0.1,2.9);
	\coordinate (trueu) at (0.1,2.9);
	
	\coordinate (bw) at (-0.1,2.55);
	\coordinate (be) at (0.1,2.55);

	\node (tr1u) at (-0.7,4.2) {\strut$\bigtriangleup$};
	\coordinate (trd1wu) at (-0.8,4.12);
	\coordinate (trd1eu) at (-0.6,4.12);
	\coordinate (tru1u) at (-0.7,4.35);
	\node (c1) at (-0.7,5) {\strut$c\otimes_s d$};
	
	\node (tr2u) at (0.7,4.2) {\strut$\bigtriangleup$};
	\coordinate (trd2wu) at (0.6,4.12);
	\coordinate (trd2eu) at (0.8,4.12);
	\coordinate (tru2u) at (0.7,4.35);
	\node (c2) at (0.7,5) {\strut$c'\otimes_s d'$};

	\coordinate (west) at (-1.1,2.7);
	\coordinate (north) at (0,3.7);
	\coordinate (east) at (1.2,2.7);
	\coordinate (south) at (0,1.6);
	
	\begin{knot}[clip width=4,clip radius=2pt]
	\strand [blue, thick] (west) to [out=90,in=-180] (north) to [out=0,in=90] (east) to [out=-90,in=0] (south) to [out=-180,in=-90] (west);
	\strand [thick] (cf) to (trd);
	\strand [thick] (cf2) to (trd2);
	\strand [thick] (true) to [out=70, in=-90] (trd1e);
	\strand [thick] (trd1e) to (trueu);
	\strand [thick] (trueu) to [out=90, in=-70] (trd1eu);
	\strand [thick] (true2) to [out=90, in=-90] (trd2eu);
	\strand [thick] (truw) to [out=90,in=-90] (trd1wu);
	\strand [thick] (tru2u) to (c2);
	\strand [thick] (tru1u) to (c1);
	\strand [thick] (bw) to [out=-90,in=90] (trd1w) to [out=-90,in=70] (truw2);
	\strand [thick] (bw) to [out=90,in=-90] (truwu);
	\strand [thick] (truwu) to [out=90,in=-110] (trd2wu);
	\flipcrossings{1,5,4,8}
	\end{knot}
	\end{tikzpicture}
}
	=
	\hbox{
		\begin{tikzpicture}[baseline=(current  bounding  box.center)]
		
		\node (cf) at (-0.7,0.5){\strut$c\otimes_s d$};
		\node (cf2) at (0.7,0.5){\strut$c'\otimes_s d'$};
		
		\node (tr) at (-0.7,1.6) {\strut$\bigtriangledown$};
		\coordinate (trd) at (-0.7,1.5);
		\coordinate (truw) at (-0.8,1.7);
		\coordinate (true) at (-0.6,1.7);
		
		\node (tr2) at (0.7,1.6) {\strut$\bigtriangledown$};
		\coordinate (trd2) at (0.7,1.5);
		\coordinate (truw2) at (0.6,1.7);
		\coordinate (true2) at (0.8,1.7);

		\node (tr1u) at (-0.7,3.7) {\strut$\bigtriangleup$};
		\coordinate (trd1wu) at (-0.8,3.62);
		\coordinate (trd1eu) at (-0.6,3.62);
		\coordinate (tru1u) at (-0.7,3.85);
		\node (c1) at (-0.7,5) {\strut$c\otimes_s d$};
		
		\node (tr2u) at (0.7,3.7) {\strut$\bigtriangleup$};
		\coordinate (trd2wu) at (0.6,3.62);
		\coordinate (trd2eu) at (0.8,3.62);
		\coordinate (tru2u) at (0.7,3.85);
		\node (c2) at (0.7,5) {\strut$c'\otimes_s d'$};

		\coordinate (west) at (-1.1,2.7);
		\coordinate (north) at (0,3.1);
		\coordinate (east) at (1.2,2.7);
		\coordinate (south) at (0,2.3);
		
		\begin{knot}[clip width=4,clip radius=2pt]
		\strand [blue, thick] (west) to [out=90,in=-180] (north) to [out=0,in=90] (east) to [out=-90,in=0] (south) to [out=-180,in=-90] (west);
		\strand [thick] (cf) to (trd);
		\strand [thick] (cf2) to (trd2);
		\strand [thick] (true2) to [out=90, in=-90] (trd2eu);
		\strand [thick] (trd1eu) to [out=-70,in=70] (true);
		\strand [thick] (tru2u) to (c2);
		\strand [thick] (tru1u) to (c1);
		\strand [thick] (trd2wu) to [out=-110,in=110] (truw2);
		\strand [thick] (truw) to (trd1wu);
		\flipcrossings{5,7,2,4}
		\end{knot}
		\end{tikzpicture}
	}
	=
	\hbox{
		\begin{tikzpicture}[baseline=(current  bounding  box.center)]
		
		\node (cf) at (-0.7,0.5){\strut$c\otimes_s d$};
		\node (cf2) at (0.7,0.5){\strut$c'\otimes_s d'$};
		
		\node (tr) at (-0.7,1.6) {\strut$\bigtriangledown$};
		\coordinate (trd) at (-0.7,1.5);
		\coordinate (truw) at (-0.8,1.7);
		\coordinate (true) at (-0.6,1.7);
		
		\node (tr2) at (0.7,1.6) {\strut$\bigtriangledown$};
		\coordinate (trd2) at (0.7,1.5);
		\coordinate (truw2) at (0.6,1.7);
		\coordinate (true2) at (0.8,1.7);

		\node (tr1u) at (-0.7,3.7) {\strut$\bigtriangleup$};
		\coordinate (trd1wu) at (-0.8,3.62);
		\coordinate (trd1eu) at (-0.6,3.62);
		\coordinate (tru1u) at (-0.7,3.85);
		\node (c1) at (-0.7,5) {\strut$c\otimes_s d$};
		
		\node (tr2u) at (0.7,3.7) {\strut$\bigtriangleup$};
		\coordinate (trd2wu) at (0.6,3.62);
		\coordinate (trd2eu) at (0.8,3.62);
		\coordinate (tru2u) at (0.7,3.85);
		\node (c2) at (0.7,5) {\strut$c'\otimes_s d'$};

		\coordinate (west) at (-1.1,2.7);
		\coordinate (north) at (0,3.1);
		\coordinate (east) at (1.2,2.7);
		\coordinate (south) at (0,2.3);
		
		\begin{knot}[clip width=4,clip radius=2pt]
		\strand [blue, thick] (west) to [out=90,in=-180] (north) to [out=0,in=90] (east) to [out=-90,in=0] (south) to [out=-180,in=-90] (west);
		\strand [thick] (cf) to (trd);
		\strand [thick] (cf2) to (trd2);
		\strand [thick] (true2) to [out=90, in=-90] (trd2eu);
		\strand [thick] (trd1eu) to [out=-70,in=70] (true);
		\strand [thick] (tru2u) to (c2);
		\strand [thick] (tru1u) to (c1);
		\strand [thick] (trd2wu) to [out=-110,in=110] (truw2);
		\strand [thick] (truw) to (trd1wu);
		\flipcrossings{1,3,2,4}
		\end{knot}
		\end{tikzpicture}
	},
	$$
	where in the last step we used Lemma \ref{DCslicing} to interchange the crossings between the ring and the strands between the inclusion and projection. In the last diagram, the ring evaluates to 1, and the diagram evaluates to the identity on $(c\otimes_s d)\otimes_c (c'\otimes_sd')$, see Equation \eqref{DCinclprojprops}. To show that the inclusion and projection are morphisms in $\dcentcat{A}$, ie. that they commute with the half-braidings, we compute, for $a\in\cat{A}\subset\dcentcat{A}$:
	\begin{eqnarray*}
		\hbox{
			\begin{tikzpicture}[baseline=(current  bounding  box.center)]
			
			\node (cf) at (-0.7,-1.5){\strut$c\otimes_s d$};
			\node (cf2) at (0.7,-1.5){\strut$c'\otimes_s d'$};
			
			\node (tr) at (-0.7,0.1) {\strut$\bigtriangledown$};
			\coordinate (trd) at (-0.7,0);
			\coordinate (truw) at (-0.8,00.2);
			\coordinate (true) at (-0.6,0.2);
			
			\node (tr2) at (0.7,0.1) {\strut$\bigtriangledown$};
			\coordinate (trd2) at (0.7,0);
			\coordinate (truw2) at (0.6,00.2);
			\coordinate (true2) at (0.8,0.2);

			\node (tr1) at (0,1.2) {\strut$\bigtriangleup$};
			\coordinate (trd1ww) at (-0.13,1.12);
			\coordinate (trd1e) at (0.1,1.12);
			\coordinate (trd1w) at (-0.1,1.12);
			\coordinate (trd1ee) at (0.13,1.12);
			\coordinate (tru1) at (0,1.35);
			\node (c1) at (0,2) {\strut$(c\otimes_c c')\otimes_s(d\otimes_cd')$};
			
			\node (a) at (-1.5,-1.5){\strut$a$};
			\coordinate (lc) at (-1.5,-1.25);
			\coordinate (rc) at (1.6,0.1);
			\coordinate (ao) at (1.6,2);
			
			\begin{knot}[clip width=4]
			\strand [thick] (cf) to (trd);
			\strand [thick] (cf2) to (trd2);
			\strand [thick] (true) to [out=70, in=-70] (trd1e);
			\strand [thick] (true2) to [out=70, in=-70] (trd1ee);
			\strand [thick] (tru1) to (c1);
			\strand [thick] (truw) to [out=110,in=-110] (trd1ww);
			\strand [thick] (a) to [out=90,in=-90] (lc) to [out=90,in=-90] (rc) to [out=90,in=-90] (ao);
			\end{knot}
			\draw [thick] (truw2) to [out=110,in=-110] (trd1w);
			\end{tikzpicture}
		}
		=&
		\hbox{
			\begin{tikzpicture}[baseline=(current  bounding  box.center)]
			
			\node (cf) at (-0.7,-0.5){\strut$c\otimes_s d$};
			\node (cf2) at (0.7,-0.5){\strut$c'\otimes_s d'$};
			
			\node (tr) at (-0.7,0.1) {\strut$\bigtriangledown$};
			\coordinate (trd) at (-0.7,0);
			\coordinate (truw) at (-0.8,00.2);
			\coordinate (true) at (-0.6,0.2);
			
			\node (tr2) at (0.7,0.1) {\strut$\bigtriangledown$};
			\coordinate (trd2) at (0.7,0);
			\coordinate (truw2) at (0.6,00.2);
			\coordinate (true2) at (0.8,0.2);

			\node (tr1) at (0,2.2) {\strut$\bigtriangleup$};
			\coordinate (trd1ww) at (-0.13,2.12);
			\coordinate (trd1e) at (0.1,2.12);
			\coordinate (trd1w) at (-0.1,2.12);
			\coordinate (trd1ee) at (0.13,2.12);
			\coordinate (tru1) at (0,2.35);
			\node (c1) at (0,3) {\strut$(c\otimes_c c')\otimes_s(d\otimes_cd')$};
			
			\node (a) at (-1.5,-0.5){\strut$a$};
			\coordinate (lc) at (-1.5,0);
			\coordinate (rc) at (1.6,2);
			\coordinate (ao) at (1.6,3);
			
			\begin{knot}[clip width=4,clip radius=3pt]
			\strand [thick] (cf) to (trd);
			\strand [thick] (cf2) to (trd2);
			\strand [thick] (true) to [out=90, in=-100] (trd1e);
			\strand [thick] (true2) to [out=70, in=-70] (trd1ee);
			\strand [thick] (tru1) to (c1);
			\strand [thick] (truw) to [out=110,in=-110] (trd1ww);
			\strand [thick] (truw2) to [out=90,in=-80] (trd1w);
			\strand [thick] (a) to [out=90,in=-90] (lc) to [out=90,in=-90] (rc) to [out=90,in=-90] (ao);
			\flipcrossings{3,4}
			\end{knot}
			\end{tikzpicture}
		}
		=
		\hbox{
			\begin{tikzpicture}[baseline=(current  bounding  box.center)]
			
			\node (cf) at (-0.7,-0.5){\strut$c\otimes_s d$};
			\node (cf2) at (0.7,-0.5){\strut$c'\otimes_s d'$};
			
			\node (tr) at (-0.7,0.1) {\strut$\bigtriangledown$};
			\coordinate (trd) at (-0.7,0);
			\coordinate (truw) at (-0.8,00.2);
			\coordinate (true) at (-0.6,0.2);
			
			\node (tr2) at (0.7,0.1) {\strut$\bigtriangledown$};
			\coordinate (trd2) at (0.7,0);
			\coordinate (truw2) at (0.6,00.2);
			\coordinate (true2) at (0.8,0.2);

			\node (tr1) at (0,2.2) {\strut$\bigtriangleup$};
			\coordinate (trd1ww) at (-0.13,2.12);
			\coordinate (trd1e) at (0.1,2.12);
			\coordinate (trd1w) at (-0.1,2.12);
			\coordinate (trd1ee) at (0.13,2.12);
			\coordinate (tru1) at (0,2.35);
			\node (c1) at (0,3) {\strut$(c\otimes_c c')\otimes_s(d\otimes_cd')$};
			
			\node (a) at (-1.5,-0.5){\strut$a$};
			\coordinate (lc) at (-1.5,0.8);
			\coordinate (rc) at (1.6,2.5);
			\coordinate (ao) at (1.6,3);
			
			\coordinate (mide) at (0.25,1);
			\coordinate (midw) at (-0.25,1);
			
			\begin{knot}[clip width=4,clip radius =3pt]
			\strand [thick] (cf) to (trd);
			\strand [thick] (cf2) to (trd2);
			\strand [thick] (true) to [out=70,in=-90] (mide) to [out=90,in=-90] (trd1e);
			\strand [thick] (true2) to [out=70, in=-70] (trd1ee);
			\strand [thick] (tru1) to (c1);
			\strand [thick] (truw) to [out=110,in=-110] (trd1ww);
			\strand [thick] (a) to [out=90,in=-90] (lc) to [out=90,in=-90] (rc) to [out=90,in=-90] (ao);
			\strand [thick] (midw) to [out=90,in=-90] (trd1w);
			\flipcrossings{3}
			\end{knot}
			\draw [thick] (truw2) to [out=110,in=-90] (midw);
			\end{tikzpicture}
		}\\
		=&
		\hbox{
			\begin{tikzpicture}[baseline=(current  bounding  box.center)]
			
			\node (cf) at (-0.7,0.5){\strut$c\otimes_s d$};
			\node (cf2) at (0.7,0.5){\strut$c'\otimes_s d'$};
			
			\node (tr) at (-0.7,1.1) {\strut$\bigtriangledown$};
			\coordinate (trd) at (-0.7,1);
			\coordinate (truw) at (-0.8,1.2);
			\coordinate (true) at (-0.6,1.2);
			
			\node (tr2) at (0.7,1.1) {\strut$\bigtriangledown$};
			\coordinate (trd2) at (0.7,1);
			\coordinate (truw2) at (0.6,1.2);
			\coordinate (true2) at (0.8,1.2);

			\node (tr1) at (0,2.2) {\strut$\bigtriangleup$};
			\coordinate (trd1ww) at (-0.13,2.12);
			\coordinate (trd1e) at (0.1,2.12);
			\coordinate (trd1w) at (-0.1,2.12);
			\coordinate (trd1ee) at (0.13,2.12);
			\coordinate (tru1) at (0,2.35);
			\node (c1) at (0,4) {\strut$(c\otimes_c c')\otimes_s(d\otimes_cd')$};
			
			\node (a) at (-1.5,0.5){\strut$a$};
			\coordinate (lc) at (-1.5,2);
			\coordinate (rc) at (1.6,3.75);
			\coordinate (ao) at (1.6,4);
			
			\begin{knot}[clip width=4,clip radius =3pt]
			\strand [thick] (cf) to (trd);
			\strand [thick] (cf2) to (trd2);
			\strand [thick] (true) to [out=70, in=-70] (trd1e);
			\strand [thick] (true2) to [out=70, in=-70] (trd1ee);
			\strand [thick] (tru1) to (c1);
			\strand [thick] (truw) to [out=110,in=-110] (trd1ww);
			\strand [thick] (a) to [out=90,in=-90] (lc) to [out=90,in=-90] (rc) to [out=90,in=-90] (ao);
			\end{knot}
			\draw [thick] (truw2) to [out=110,in=-110] (trd1w);
			\end{tikzpicture}
		},
	\end{eqnarray*}
	where we made repeated use of slicing, and use of Equation \eqref{DCcrossinginteraction} in the second equality.
\end{proof}

\begin{lem}
	The morphisms from Equation \eqref{DClaxproj} combine to give natural transformations.
\end{lem}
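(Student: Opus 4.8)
The statement asks us to show that, as $c,c',d,d'$ range over $\dcentcat{A}$, the families $\zeta_{c,d,c',d'}$ and $\eta_{c,c',d,d'}$ are natural transformations between the two functors $\dcentcat{A}^{\boxtimes 4}\rar\dcentcat{A}$ given by $(c,c',d,d')\mapsto (c\otimes_c c')\otimes_s(d\otimes_c d')$ and $(c,c',d,d')\mapsto (c\otimes_s d)\otimes_c(c'\otimes_s d')$. Both assignments are functorial, being built from the functors $\otimes_c$ and $\otimes_s$, so the content is the commutativity, for morphisms $f\colon c\rar\tilde c$, $f'\colon c'\rar\tilde c'$, $g\colon d\rar\tilde d$, $g'\colon d'\rar\tilde d'$, of the square
\[
\zeta_{\tilde c,\tilde d,\tilde c',\tilde d'}\circ\big((f\otimes_s g)\otimes_c(f'\otimes_s g')\big)=\big((f\otimes_c f')\otimes_s(g\otimes_c g')\big)\circ\zeta_{c,d,c',d'}
\]
together with the mirror-image identity for $\eta$.

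The plan is to verify these by sliding the morphism boxes $f,f',g,g'$ through the string diagrams defining $\zeta$ and $\eta$ in Equation \eqref{laxproj}, using three ingredients. First, by the definition of $\otimes_s$ on morphisms in Equation \eqref{symtensmorph}, each factor such as $f\otimes_s g$ is the composite $\bigtriangleup\circ(f\otimes g)\circ\bigtriangledown$ with the boxes $f,g$ sitting on the two split strands; when this is stacked against the $\bigtriangledown$ that opens $\zeta$ (resp.\ the $\bigtriangleup$ that closes $\eta$), the two triangles meeting along a strand form $\bigtriangledown\circ\bigtriangleup=\Pi$ by Equation \eqref{inclprojprops}. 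Second, the idempotent $\Pi$ is natural: since morphisms of $\dcentcat{A}$ commute with half-braidings and $\Pi$ is built from the encircling ring of Equation \eqref{idempotent} that implements these half-braidings, we have $\Pi_{\tilde c,\tilde d}\circ(f\otimes g)=(f\otimes g)\circ\Pi_{c,d}$, after which the leftover idempotent is absorbed into the adjacent inclusion or projection via $\Pi\circ\bigtriangledown=\bigtriangledown$ and $\bigtriangleup\circ\Pi=\bigtriangleup$. Third, I would pass $g$ and $f'$ through the single symmetry crossing $s_{d,c'}$ using naturality of the symmetry in $\cat{A}$ together with Equation \eqref{crossinginteraction}. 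After these moves both composites collapse to the same diagram, namely $\zeta$ (or $\eta$) with the four boxes transported to the outer legs, which is exactly the other side of the square.

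The step I expect to be the main obstacle is the bookkeeping at the junctions where the projection of one tensor factor meets the inclusion of the next, producing $\Pi_{\tilde c,\tilde d}$: here one must genuinely invoke naturality of $\Pi$ (equivalently, that $f,f',g,g'$ preserve the subobjects cut out by the idempotents) rather than mere functoriality of $\otimes_c$, and one must check that this works simultaneously on all four legs and is compatible with transporting the boxes past the crossing $s_{d,c'}$. Since $\zeta$ and $\eta$ are the inclusion and projection of a single splitting, I would establish the $\zeta$-square in full and then obtain the $\eta$-square either by running the identical sliding argument with the diagram read upside down, or by pre- and post-composing the $\zeta$-square with the retraction and using $\eta\circ\zeta=\tn{id}$.
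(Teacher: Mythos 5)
Your main argument is correct and establishes the same naturality squares as the paper, but it handles the key combinatorial step differently. The paper's proof, like yours, starts by replacing the inner projection--inclusion pairs by the idempotents $\Pi$ and slides the boxes $f,g,f',g'$ through them (what you call naturality of $\Pi$, which the paper implements diagrammatically via naturality of the symmetry and braiding together with Equation \eqref{crossinginteraction}). The difference is in what happens to the leftover rings: the paper keeps both rings, moves them down past the crossing, and then needs the genuinely geometric steps --- cloaking (Lemma \ref{cloaking}) and the mutual transparency of rings --- to cancel one ring against the bottom inclusions and reinterpret the other as the inclusion--projection pair of the big idempotent on $(cc')\otimes_c$-side. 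You avoid all of this by pure idempotent algebra: the small idempotents are absorbed into the adjacent inclusions via $\Pi\circ\bigtriangledown=\bigtriangledown$, and the big idempotent is introduced and removed on the other side of the square using its own naturality together with $\bigtriangleup\circ\Pi=\bigtriangleup$. This buys a shorter argument requiring neither cloaking nor transparency, at the mild cost of invoking naturality of $\Pi$ also for the composite objects $(c\otimes_c c', d\otimes_c d')$ --- which is legitimate, since $f\otimes_c f'$ and $g\otimes_c g'$ are again morphisms in $\dcentcat{A}$ and hence commute with half-braidings, exactly as you say.

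One caveat: of your two proposed ways to obtain the $\eta$-square, only the first (running the identical argument with the diagrams read upside down, which is what the paper does) is complete. The second --- composing the $\zeta$-square with the retraction and using $\eta\circ\zeta=\id$ --- is not formally valid: it only yields the desired identity pre-composed with the idempotent $\zeta\circ\eta$, which is \emph{not} the identity (the split subobject is proper in general), so cancelling it requires the additional fact that the idempotent $\zeta\circ\eta$ is itself natural. That fact is true, but it is not a formal consequence of $\eta\circ\zeta=\id$ together with naturality of $\zeta$; it needs the same kind of half-braiding argument as the rest of the proof.
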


\begin{proof}
	Let $f\colon c_1\rar c_2$, $f'\colon c_1'\rar c_2'$, $g\colon d_1\rar d_2$ and $g'\colon d_1'\rar d_2'$ be morphisms in $\dcentcat{A}$. Using the definition of $\otimes_s$ on morphisms, Equation \eqref{DCsymtensmorph}, we compute:
	$$
	\hbox{
		\begin{tikzpicture}[baseline=(current  bounding  box.center)]
		
		\coordinate (csd) at (-0.7,-2.8);
		\coordinate (cpsdp) at (0.7,-2.8);
		
		\node (trll) at (-0.7,-2.5) {\strut$\bigtriangledown$};
		\coordinate (trdll) at (-0.7,-2.6);
		\coordinate (truwll) at (-0.8,-2.4);
		\coordinate (truell) at (-0.6,-2.4);
		
		\node (tr2lr) at (0.7,-2.5) {\strut$\bigtriangledown$};
		\coordinate (trd2lr) at (0.7,-2.6);
		\coordinate (truw2lr) at (0.6,-2.4);
		\coordinate (true2lr) at (0.8,-2.4);

		\node (f) at (-1,-1.8)[draw,minimum height=18pt]{\strut$f$};
		\node (g) at (-0.4,-1.8)[draw,minimum height=18pt]{\strut$g$};
		\node (fp) at (0.4,-1.8)[draw,minimum height=18pt]{\strut$f'$};
		\node (gp) at (1,-1.8)[draw,minimum height=18pt]{\strut$g'$};
		
		\node (tr1ll) at (-0.7,-1) {\strut$\bigtriangleup$};
		\coordinate (trd1llw) at (-0.83,-1.08);
		\coordinate (trd1lle) at (-0.57,-1.08);
		\coordinate (trllu1) at (-0.7,-0.85);
		
		\node (tr1lr) at (0.7,-1) {\strut$\bigtriangleup$};
		\coordinate (trd1lre) at (0.83,-1.08);
		\coordinate (trd1lrw) at (0.57,-1.08);
		\coordinate (trlru1) at (0.7,-0.85);

		\node (tr) at (-0.7,0.1) {\strut$\bigtriangledown$};
		\coordinate (trd) at (-0.7,0);
		\coordinate (truw) at (-0.8,00.2);
		\coordinate (true) at (-0.6,0.2);
		
		\node (tr2) at (0.7,0.1) {\strut$\bigtriangledown$};
		\coordinate (trd2) at (0.7,0);
		\coordinate (truw2) at (0.6,00.2);
		\coordinate (true2) at (0.8,0.2);

		\node (tr1) at (0,1.8) {\strut$\bigtriangleup$};
		\coordinate (trd1ww) at (-0.13,1.72);
		\coordinate (trd1e) at (0.1,1.72);
		\coordinate (trd1w) at (-0.1,1.72);
		\coordinate (trd1ee) at (0.13,1.72);
		\coordinate (tru1) at (0,1.95);
		\coordinate (c1) at (0,2.1);

		\begin{knot}[clip width=4,clip radius =3pt]
		\strand [thick] (csd) to (trdll);
		\strand [thick] (cpsdp) to (trd2lr);
		\strand [thick] (truwll) to (f);
		\strand [thick] (truell) to (g);
		\strand [thick] (truw2lr) to (fp);
		\strand [thick] (true2lr) to (gp);
		\strand [thick] (f) to (trd1llw);
		\strand [thick] (g) to (trd1lle);
		\strand [thick] (fp) to (trd1lrw);
		\strand [thick] (gp) to (trd1lre);			
		\strand [thick] (trllu1) to (trd);
		\strand [thick] (trlru1) to (trd2);
		\strand [thick] (true) to [out=70, in=-70] (trd1e);
		\strand [thick] (true2) to [out=70, in=-70] (trd1ee);
		\strand [thick] (tru1) to (c1);
		\strand [thick] (truw) to [out=110,in=-110] (trd1ww);
		\end{knot}
		\draw [thick] (truw2) to [out=110,in=-110] (trd1w);
		\end{tikzpicture}
	}
	=
	\hbox{
		\begin{tikzpicture}[baseline=(current  bounding  box.center)]
		
		\coordinate (csd) at (-0.7,-2.8);
		\coordinate (cpsdp) at (0.7,-2.8);
		
		\node (trll) at (-0.7,-2.5) {\strut$\bigtriangledown$};
		\coordinate (trdll) at (-0.7,-2.6);
		\coordinate (truwll) at (-0.8,-2.4);
		\coordinate (truell) at (-0.6,-2.4);
		
		\node (tr2lr) at (0.7,-2.5) {\strut$\bigtriangledown$};
		\coordinate (trd2lr) at (0.7,-2.6);
		\coordinate (truw2lr) at (0.6,-2.4);
		\coordinate (true2lr) at (0.8,-2.4);
		
		\coordinate (lf) at (-1,-1.8);
		\coordinate (lg) at (0.4,-1.8);
		\coordinate (lfp) at (-0.4,-1.8);
		\coordinate (lgp) at (1,-1.8);

		\node (f) at (-1,0.75)[draw,minimum height=18pt]{\strut$f$};
		\node (g) at (0.4,0.75)[draw,minimum height=18pt]{\strut$g$};
		\node (fp) at (-0.4,0.75)[draw,minimum height=18pt]{\strut$f'$};
		\node (gp) at (1,0.75)[draw,minimum height=18pt]{\strut$g'$};

		\node (tr1) at (0,1.8) {\strut$\bigtriangleup$};
		\coordinate (trd1ww) at (-0.13,1.72);
		\coordinate (trd1e) at (0.1,1.72);
		\coordinate (trd1w) at (-0.1,1.72);
		\coordinate (trd1ee) at (0.13,1.72);
		\coordinate (tru1) at (0,1.95);
		\coordinate (c1) at (0,2.1);

		\coordinate (west) at (-1.3,-0.4);
		\coordinate (north) at (-0.2,-0.1);
		\coordinate (east) at (0.7,-0.3);
		\coordinate (south) at (-0.2,-0.6);
		
		\coordinate (west1) at (-0.7,-1.0);
		\coordinate (north1) at (0.2,-0.8);
		\coordinate (east1) at (1.3,-1.1);
		\coordinate (south1) at (0.2,-1.3);

		\begin{knot}[clip width=4,clip radius =3pt]
		\strand [blue, thick] (west) to [out=90,in=-180] (north) to [out=0,in=90] (east) to [out=-90,in=0](south) to [out=-180,in=-90] (west);
		\strand [blue, thick] (west1) to [out=90,in=-180] (north1) to [out=0,in=90] (east1) to[out=-90,in=0](south1) to [out=-180,in=-90] (west1);
		\strand [thick] (csd) to (trdll);
		\strand [thick] (cpsdp) to (trd2lr);
		\strand [thick] (truwll) to [out=100,in=-90] (lf) to (f);
		\strand [thick] (truell) to [out=80,in=-90](lg) to (g);
		\strand [thick] (true2lr) to [out=80,in=-90] (lgp) to (gp);
		\strand [thick] (g) to [out=90, in=-70] (trd1e);
		\strand [thick] (gp) to [out=90, in=-70] (trd1ee);
		\strand [thick] (tru1) to (c1);
		\strand [thick] (f) to [out=90,in=-110] (trd1ww);
		\strand [thick] (fp) to [out=90,in=-110] (trd1w);
		\strand [thick] (lfp) to (fp);
		\flipcrossings{2,3,12,9}
		\end{knot}
		\draw [thick] (truw2lr) to [in=-90,out=110] (lfp);
		\end{tikzpicture}
	},
	$$
	where we replaced projections followed by inclusions by the idempotent from Lemma \ref{DCidempotent}, and used the naturality of the symmetry and braiding, as well as Equation \eqref{DCcrossinginteraction}, to move the rings down and the morphisms up. We can now use cloaking (Lemma \ref{DCcloaking}) for the bottom strand of the top ring with the bottom ring to get:
	$$
	\hbox{
		\begin{tikzpicture}[baseline=(current  bounding  box.center)]
		
		\coordinate (csd) at (-0.7,-2.8);
		\coordinate (cpsdp) at (0.7,-2.8);
		
		\node (trll) at (-0.7,-2.5) {\strut$\bigtriangledown$};
		\coordinate (trdll) at (-0.7,-2.6);
		\coordinate (truwll) at (-0.8,-2.4);
		\coordinate (truell) at (-0.6,-2.4);
		
		\node (tr2lr) at (0.7,-2.5) {\strut$\bigtriangledown$};
		\coordinate (trd2lr) at (0.7,-2.6);
		\coordinate (truw2lr) at (0.6,-2.4);
		\coordinate (true2lr) at (0.8,-2.4);
		
		\coordinate (lf) at (-1,-1.8);
		\coordinate (lg) at (0.4,-1.8);
		\coordinate (lfp) at (-0.4,-1.8);
		\coordinate (lgp) at (1,-1.8);

		\node (f) at (-1,0.75)[draw,minimum height=18pt]{\strut$f$};
		\node (g) at (0.4,0.75)[draw,minimum height=18pt]{\strut$g$};
		\node (fp) at (-0.4,0.75)[draw,minimum height=18pt]{\strut$f'$};
		\node (gp) at (1,0.75)[draw,minimum height=18pt]{\strut$g'$};

		\node (tr1) at (0,1.8) {\strut$\bigtriangleup$};
		\coordinate (trd1ww) at (-0.13,1.72);
		\coordinate (trd1e) at (0.1,1.72);
		\coordinate (trd1w) at (-0.1,1.72);
		\coordinate (trd1ee) at (0.13,1.72);
		\coordinate (tru1) at (0,1.95);
		\coordinate (c1) at (0,2.1);
		
		\coordinate (west) at (-1.3,-0.5);
		\coordinate (north) at (-0.2,-0.1);
		\coordinate (east) at (1.6,-0.6);
		\coordinate (south) at (-0.2,-1.2);
		
		\coordinate (west1) at (-0.7,-0.6);
		\coordinate (north1) at (0.2,-0.3);
		\coordinate (east1) at (1.3,-0.6);
		\coordinate (south1) at (0.2,-0.9);

		\begin{knot}[clip width=4,clip radius =3pt]
		\strand [blue, thick] (west) to [out=90,in=-180] (north) to [out=0,in=90] (east) to [out=-90,in=0](south) to [out=-180,in=-90] (west);
		\strand [blue, thick] (west1) to [out=90,in=-180] (north1) to [out=0,in=90] (east1) to[out=-90,in=0](south1) to [out=-180,in=-90] (west1);
		\strand [thick] (csd) to (trdll);
		\strand [thick] (cpsdp) to (trd2lr);
		\strand [thick] (truwll) to [out=100,in=-90] (lf) to (f);
		\strand [thick] (truell) to [out=80,in=-90](lg) to (g);
		\strand [thick] (true2lr) to [out=80,in=-90] (lgp) to (gp);
		\strand [thick] (g) to [out=90, in=-70] (trd1e);
		\strand [thick] (gp) to [out=90, in=-70] (trd1ee);
		\strand [thick] (tru1) to (c1);
		\strand [thick] (f) to [out=90,in=-110] (trd1ww);
		\strand [thick] (fp) to [out=90,in=-110] (trd1w);
		\strand [thick] (lfp) to (fp);
		\flipcrossings{2,3,14,11,8,5}
		\end{knot}
		\draw [thick] (truw2lr) to [in=-90,out=110] (lfp);
		\end{tikzpicture}
	}
	=
	\hbox{
		\begin{tikzpicture}[baseline=(current  bounding  box.center)]
		
		\coordinate (csd) at (-0.7,-2.8);
		\coordinate (cpsdp) at (0.7,-2.8);
		
		\node (trll) at (-0.7,-2.5) {\strut$\bigtriangledown$};
		\coordinate (trdll) at (-0.7,-2.6);
		\coordinate (truwll) at (-0.8,-2.4);
		\coordinate (truell) at (-0.6,-2.4);
		
		\node (tr2lr) at (0.7,-2.5) {\strut$\bigtriangledown$};
		\coordinate (trd2lr) at (0.7,-2.6);
		\coordinate (truw2lr) at (0.6,-2.4);
		\coordinate (true2lr) at (0.8,-2.4);

		\node (tr1d) at (0,-0.6) {\strut$\bigtriangleup$};
		\coordinate (lf) at (-0.13,-0.68);
		\coordinate (lg) at (0.1,-0.68);
		\coordinate (lfp) at (-0.1,-0.68);
		\coordinate (lgp) at (0.13,-0.68);
		\coordinate (tru1d) at (0,-0.45);

		\node (trmu) at (0,-0.2) {\strut$\bigtriangledown$};
		\coordinate (trdmu) at (0,-0.3);
		\coordinate (truwmu) at (-0.1,-0.1);
		\coordinate (truemu) at (0.1,-0.1);

		\node (f) at (-1,0.75)[draw,minimum height=18pt]{\strut$f$};
		\node (g) at (0.4,0.75)[draw,minimum height=18pt]{\strut$g$};
		\node (fp) at (-0.4,0.75)[draw,minimum height=18pt]{\strut$f'$};
		\node (gp) at (1,0.75)[draw,minimum height=18pt]{\strut$g'$};

		\node (tr1) at (0,1.8) {\strut$\bigtriangleup$};
		\coordinate (trd1ww) at (-0.13,1.72);
		\coordinate (trd1e) at (0.1,1.72);
		\coordinate (trd1w) at (-0.1,1.72);
		\coordinate (trd1ee) at (0.13,1.72);
		\coordinate (tru1) at (0,1.95);
		\coordinate (c1) at (0,2.1);

		\begin{knot}[clip width=4,clip radius =3pt]
		\strand [thick] (csd) to (trdll);
		\strand [thick] (cpsdp) to (trd2lr);
		\strand [thick] (truwll) to [out=100,in=-90] (lf);
		\strand [thick] (truwmu) to [out=120,in=-90] (f);
		\strand [thick] (truwmu) to [out=100,in=-90] (fp);
		\strand [thick] (truemu) to [out=80,in=-90] (g);
		\strand [thick] (truemu) to [out=60,in=-90] (gp);
		\strand [thick] (truell) to [out=80,in=-90](lg);
		\strand [thick] (true2lr) to [out=80,in=-90] (lgp);
		\strand [thick] (g) to [out=90, in=-70] (trd1e);
		\strand [thick] (gp) to [out=90, in=-70] (trd1ee);
		\strand [thick] (tru1) to (c1);
		\strand [thick] (f) to [out=90,in=-110] (trd1ww);
		\strand [thick] (fp) to [out=90,in=-110] (trd1w);
		\strand [thick] (tru1d) to (trdmu);
		\end{knot}
		\draw [thick] (truw2lr) to [in=-90,out=110] (lfp);
		\end{tikzpicture}
	}.
	$$
	The last equality follows from the fact that the rings are transparent to each other. This we means we can bring the smaller ring down using Equation \eqref{DCcrossinginteraction}, and cancel it with the inclusion, cf. Equation \eqref{DCinclprojprops}. The larger ring then gives the inclusion-projection composite in the middle of the last diagram, using Equation \eqref{DCinclprojprops} again. 
	
	The proof of naturality for the other map in Equation \eqref{DClaxproj} is obtained by reading the diagrams top to bottom.
\end{proof}

With this Lemma in hand, we can define $\eta$ and $\zeta$ to be the natural transformations with components $\eta_{c,c',d,d'}$ and $\zeta_{c,d,c',d'}$ defined in Equation \eqref{DClaxproj}.

\subsubsection{Unit Compatiblity}
We will now produce the required morphisms $u_0,u_1,u_2$ and $v_0,v_1,v_2$ that relate the units for the two tensor products on $\dcentcat{A}$. 

We start with the following observation:
\begin{lem}
	The following are morphisms in $\dcentcat{A}$
	\begin{eqnarray*}
		\hbox{
			\begin{tikzpicture}[baseline=(current  bounding  box.center)]
			
			\coordinate (outw) at (-0.04,3.8);
			\coordinate (oute) at (0.04,3.8);
			\coordinate (mid) at (0,3);
			
			\begin{knot}[clip width=4, clip radius = 3pt]
			\strand [thick, blue] (outw) to [out=-90,in=180] (mid) to [out=0,in=-90] (oute);
			\end{knot}
			\end{tikzpicture}
		}&:=&(v_0\colon  \mathbb{I}_c\xrightarrow{\oplus\frac{t_i}{D}\bar{\tn{coev}}_i}\bigoplus_{i\in\SA} i\otimes_c i^*)
		\\
		\hbox{
			\begin{tikzpicture}[baseline=(current  bounding  box.center)]
			
			\coordinate (inw) at (-0.04,2.2);
			\coordinate (ine) at (0.04,2.2);
			\coordinate (mid) at (0,3);
			
			\begin{knot}[clip width=4, clip radius = 3pt]
			\strand [thick, blue] (inw) to [out=90,in=180] (mid) to [out=0,in=90] (ine);
			\end{knot}
			\end{tikzpicture}
		}&:=&(u_0\colon  \mathbb{I}_s \xrightarrow{\oplus\tn{ev}} \mathbb{I}_c).
	\end{eqnarray*} 
	These morphism exhibit the unit $\mathbb{I}_c$ for $\otimes_c$ as a subobject of the unit $\mathbb{I}_s$ for $\otimes_s$ with inclusion $v_0$ and projection $u_0$.
\end{lem}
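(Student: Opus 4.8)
The plan is to verify the two assertions separately: first that $v_0$ and $u_0$ intertwine the half-braidings, so that they are genuinely morphisms of $\dcentcat{A}$, and second that $u_0\circ v_0=\id_{\mathbb I_c}$, which is exactly the condition for $\mathbb I_c$ to be a retract of $\mathbb I_s$ with the stated inclusion and projection. Throughout I use that $\mathbb I_c=(\mathbf 1,\text{triv})$ carries the trivial half-braiding, while the half-braiding $\beta^{\mathbb I_s}$ on $\mathbb I_s=\bigoplus_{i\in\SA} i\otimes_c i^*$ is the one from Equation \eqref{unithalfbraid}, assembled from the bases $B(ai,j)$ and their partners $\phi^*$ of Equation \eqref{phistardef}. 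Being a morphism in $\dcentcat{A}$ then amounts, for each $a\in\cat A$, to the statement that an $a$-strand passed across the image of $v_0$ (resp. $u_0$) by $\beta^{\mathbb I_s}$ can be slid off freely.

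For $u_0=\bigoplus_i\ev_i$ I would post-compose the half-braiding \eqref{unithalfbraid} with the evaluation $\ev_j\colon j\otimes j^*\to\mathbf 1$ on the outgoing strands. Capping the $(j,j^*)$-pair off closes the boxes $\phi$ and $\phi^*$ into each other; by the defining property of $\phi^*$ (Equation \eqref{phistardef}) and the completeness of the family $\{\phi\in B(ai,j)\}$ as a resolution of the identity on $ai$ (Equation \eqref{resid}), the double sum over $j$ and $\phi$ collapses and leaves precisely the $a$-strand running straight past an uncrossed evaluation. This is exactly $\ev\circ\beta^{\mathbb I_s}_a=\ev$ with $a$ transparent, i.e. the centrality of $u_0$.

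The centrality of $v_0=\bigoplus_i\frac{t_i}{D}\bar{\tn{coev}}_i$ is the main obstacle, and I would treat it as the upside-down counterpart of the previous step. Pre-composing $\beta^{\mathbb I_s}_a$ with the twisted coevaluation, one must now run the sum over $i$ through the half-braiding; here the weights $\tfrac{t_i}{D}$ in the definition of $v_0$ together with Lemma \ref{otherdirectsum} (Equation \eqref{helpfulid}) are what make the summation reassemble into a bare coevaluation, while the twist incurred by $\bar{\tn{coev}}_i$ is handled using the interaction \eqref{twisttrick} of the twist with the $\phi$. The delicate point is bookkeeping the twist: because $\bar{\tn{coev}}_i$ bends $i$ and $i^*$ together, closing it through the half-braiding produces a factor $\theta_i$, and one must check that the chosen $t_i$ is exactly what cancels it, so that the $a$-strand again becomes transparent in the sense of Definition \ref{transparentdef}.

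Finally, $u_0\circ v_0$ is an endomorphism of $\mathbb I_c=\mathbf 1$, hence a scalar, computed as $\sum_{i\in\SA}\frac{t_i}{D}\,\ev_i\circ\bar{\tn{coev}}_i$. Each summand joins the cup $\bar{\tn{coev}}_i$ to the cap $\ev_i$ into a single closed $i$-loop; the loop evaluates to $d_i$ and the factor $t_i$ cancels the twist incurred in closing it, so that the $i$-th term contributes $\tfrac{d_i^2}{D}$. Summing and invoking $D=\sum_{i\in\SA}d_i^2$ gives $u_0\circ v_0=\id_{\mathbb I_c}$, which completes the identification of $\mathbb I_c$ as a subobject of $\mathbb I_s$. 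The dual composite $v_0\circ u_0$ is then the idempotent on $\mathbb I_s$ cutting out this copy of $\mathbb I_c$, matching the normalization of the ring in Equation \eqref{ringdef}.
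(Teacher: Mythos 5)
Your proposal is correct and follows essentially the same route as the paper: the paper likewise reduces everything to showing that $u_0$ and $v_0$ intertwine the half-braidings, with the $v_0$ case handled by exactly the decomposition of Lemma \ref{otherdirectsum} (Equation \eqref{helpfulid}) and the twist bookkeeping you describe, and the $u_0$ case noted to be ``similar, but simpler.'' The only difference is one of emphasis: the paper works the $v_0$ diagram computation in full and dismisses the inclusion--projection identity $u_0\circ v_0=\id_{\mathbb{I}_c}$ as clear, whereas you spell out that scalar computation (each $i$-summand contributing $d_i^2/D$, consistent with the normalization of Equation \eqref{ringdef}) and give more detail on the $u_0$ side.
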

\begin{proof}
	The fact that these maps constitute a inclusion and projection pair is clear. We still need to show that these morphisms are morphisms in $\dcentcat{A}$, i.e. that they commute with the braiding. Recall that the braiding for $\mathbb{I}_s$ is defined in Equation \eqref{DCunithalfbraid}. We compute, using Lemma \ref{DCotherdirectsum} in the last step:
	$$
	\hbox{
		\begin{tikzpicture}[baseline=(current  bounding  box.center)]
		
		\coordinate (outw) at (-0.05,3.5);
		\coordinate (oute) at (0.05,3.5);
		\coordinate (olw) at (-0.05,2.5);
		\coordinate (ole) at (0.05,2.5);
		\coordinate (mid) at (0,2);
		
		\node (ain) at (-0.5,1.8){\strut$a$};
		\coordinate (lc) at (-0.5,2.5);
		\coordinate (rc) at (0.5, 3.5);
		\coordinate (aout) at (0.5,3.5);
		
		\begin{knot}[clip width=4, clip radius = 3pt]
		\strand [thick, blue] (outw) to [out=-90,in=90] (olw) to [out=-90,in=180] (mid) to [out=0,in=-90] (ole) to [out=90,in=-90] (oute);
		\strand [thick] (ain) to [out=90,in=-90] (lc) to [out=90,in=-90] (rc) to [out=90,in=-90] (aout);
		\end{knot}
		\end{tikzpicture}
	}
	=\sum\limits_{i,j\in\cat{O}(\cat{A})}\sum\limits_{\phi\in B(aj,i)}\frac{d_j}{D}
	\hbox{
		\begin{tikzpicture}[baseline=(current  bounding  box.center)]
		
		\node (outw) at (-0.5,4){\strut$i$};
		\node (oute) at (0.05,4){\strut$i^*$};
		\node (olw) at (-0.5,2.5)[draw]{\strut$\phi$};
		\node (ole) at (0.5,3)[draw]{\strut$\phi^t$};
		\coordinate (mid) at (0,2);
		
		\node (ain) at (-0.5,1.2){\strut$a$};
		\coordinate (lc) at (0,2.9);
		\coordinate (rc) at (1, 3.3);
		\node (aout) at (0.5,4){\strut$a$};
		
		\node (jl) at (0.7,2){\strut$j^*$};
		
		\begin{knot}[clip width=4, clip radius = 3pt]
		\strand [thick] (outw) to [out=-90,in=90] (olw);
		\strand [thick] (olw) to [out=-80,in=180] (mid) to [out=0,in=-90] (rc) to [out=90,in=80] (ole);
		\strand [thick] (ain) to [out=90,in=-100] (olw);
		\strand [thick] (ole) to [out=100,in=-90] (aout);
		\strand [thick] (oute) to [out=-90,in=90] (lc) to [out=-90,in=-90] (ole);
		\end{knot}
		\end{tikzpicture}
	}
	=
	\hbox{
		\begin{tikzpicture}[baseline=(current  bounding  box.center)]
		
		\coordinate (outw) at (-0.05,3.5);
		\coordinate (oute) at (0.05,3.5);
		\coordinate (olw) at (-0.05,3.2);
		\coordinate (ole) at (0.05,3.2);
		\coordinate (mid) at (0,2.7);
		
		\node (ain) at (-0.5,1.8){\strut$a$};
		\coordinate (lc) at (-0.5,2.3);
		\coordinate (rc) at (0.5, 2.8);
		\coordinate (aout) at (0.5,3.5);
		
		\begin{knot}[clip width=4, clip radius = 3pt]
		\strand [thick, blue] (outw) to [out=-90,in=90] (olw) to [out=-90,in=180] (mid) to [out=0,in=-90] (ole) to [out=90,in=-90] (oute);
		\strand [thick] (ain) to [out=90,in=-90] (lc) to [out=90,in=-90] (rc) to [out=90,in=-90] (aout);
		\end{knot}
		\end{tikzpicture}
	}.
	$$
	A similar, but simpler, argument shows that $u_0$ commutes with braiding.
\end{proof}

To produce the morphisms $u_1$ and $v_2$, we note that:
\begin{lem}
	The objects $\mathbb{I}_c\otimes_s\mathbb{I}_c$ and $\mathbb{I}_c$ are canonically isomorphic.
\end{lem}
\begin{proof}
We observe that on $\cat{A}\subset \dcentcat{A}$, the symmetric tensor product and the convolution tensor product agree. 
\end{proof}

We set $u_1\colon  \mathbb{I}_c\otimes_s\mathbb{I}_c \xleftrightarrow{\cong} \mathbb{I}_c\colon v_2$.

For $u_2$ and $v_1$, we use the following:

\begin{lem}\label{DCu2v1def}
	The object $\mathbb{I}_s$ is a subobject of $\mathbb{I}_s\otimes_c\mathbb{I}_s$, with inclusion and projection given by
	$$
	u_2= 
	\hbox{
		\begin{tikzpicture}[baseline=(current  bounding  box.center)]
		
		\coordinate (inw) at (-0.05,0);
		\coordinate (ine) at (0.05,0);
		
		\coordinate (outlw) at (-0.35,1);
		\coordinate (outle) at (-0.25,1);
		
		\coordinate (outrw) at (0.25,1);
		\coordinate (outre) at (0.35,1);	
		
		\coordinate (mid) at (0,0.6);

		\begin{knot}[clip width=4,clip radius =3pt]
		\strand [thick,blue] (inw) to [out=90,in=-90] (outlw);
		\strand [thick,blue] (ine) to [out=90,in=-90] (outre);
		\strand [thick,blue] (outle) to [out=-90,in=180] (mid) to [out=0,in=-90] (outrw); 
		\end{knot}
		\end{tikzpicture}
	}
	=\sum\limits_{i\in\cat{O}(\cat{A})}\frac{t_i}{D}
	\hbox{
		\begin{tikzpicture}[baseline=(current  bounding  box.center)]
		
		\node (inw) at (-0.1,0){\strut$i$};
		\node (ine) at (0.1,0){\strut$i^*$};
		
		\node (outlw) at (-0.4,1.5){\strut$i$};
		\node (outle) at (-0.2,1.5){\strut$i^*$};
		
		\node (outrw) at (0.2,1.5){\strut$i$};
		\node (outre) at (0.4,1.5){\strut$i^*$};	
		
		\coordinate (mid) at (0,0.6);

		\begin{knot}[clip width=4,clip radius =3pt]
		\strand [thick] (inw) to [out=90,in=-90] (outlw);
		\strand [thick] (ine) to [out=90,in=-90] (outre);
		\strand [thick] (outle) to [out=-90,in=180] (mid) to [out=0,in=-90] (outrw); 
		\end{knot}
		\end{tikzpicture}
	}
	\mbox{ and }
	v_1= \hbox{
		\begin{tikzpicture}[baseline=(current  bounding  box.center)]
		
		\coordinate (inw) at (-0.05,1);
		\coordinate (ine) at (0.05,1);
		
		\coordinate (outlw) at (-0.35,0);
		\coordinate (outle) at (-0.25,0);
		
		\coordinate (outrw) at (0.25,0);
		\coordinate (outre) at (0.35,0);	
		
		\coordinate (mid) at (0,0.4);

		\begin{knot}[clip width=4,clip radius =3pt]
		\strand [thick,blue] (inw) to [out=-90,in=90] (outlw);
		\strand [thick,blue] (ine) to [out=-90,in=90] (outre);
		\strand [thick,blue] (outle) to [out=90,in=180] (mid) to [out=0,in=90] (outrw);
		\end{knot}
		\end{tikzpicture}
	}
	=\sum\limits_{i,j\in\cat{O}(\cat{A})} \delta_{i,j^*}
	\hbox{
		\begin{tikzpicture}[baseline=(current  bounding  box.center)]
		
		\node (inw) at (-0.1,1){\strut$i$};
		\node (ine) at (0.1,1){\strut$i^*$};
		
		\node (outlw) at (-0.4,-0.5){\strut$i$};
		\node (outle) at (-0.2,-0.5){\strut$i^*$};
		
		\node (outrw) at (0.2,-0.5){\strut$j$};
		\node (outre) at (0.4,-0.5){\strut$j^*$};	
		
		\coordinate (mid) at (0,0.4);

		\begin{knot}[clip width=4,clip radius =3pt]
		\strand [thick] (inw) to [out=-90,in=90] (outlw);
		\strand [thick] (ine) to [out=-90,in=90] (outre);
		\strand [thick] (outle) to [out=90,in=180] (mid) to [out=0,in=90] (outrw);
		\end{knot}
		\end{tikzpicture}
	},
	$$
	repectively. $\delta_{i,j^*}$ denotes the Kronecker delta symbol that is 1 when $i=j^*$ and zero otherwise. In particular, $u_2$ and $v_1$ are morphisms in $\dcentcat{A}$.
\end{lem}

\begin{proof}
	It is clear that $u_2$ and $v_1$ constitute an inclusion-projection pair, composing along of $\mathbb{I}_s\otimes_c \mathbb{I}_s$ gives $\sum_{i} d_i \frac{d_i}{D}=1$ times the identity on $\mathbb{I}_s$. We still need to establish they are indeed morphisms in $\dcentcat{A}$. That is, we need to show that 
	$$
	\hbox{
		\begin{tikzpicture}[baseline=(current  bounding  box.center)]

		\coordinate (inw) at (-0.05,0);
		\coordinate (ine) at (0.05,0);
		
		\coordinate (outlw) at (-0.35,1);
		\coordinate (outle) at (-0.25,1);
		
		\coordinate (outrw) at (0.25,1);
		\coordinate (outre) at (0.35,1);	
		
		\coordinate (mid) at (0,0.6);	
		
		\coordinate (ulw) at (-0.35,2);
		\coordinate (ule) at (-0.25,2);
		
		\coordinate (urw) at (0.25,2);
		\coordinate (ure) at (0.35,2);	
		
		\coordinate (a) at (-0.5,0);
		\coordinate (lc) at (-0.5,0.6);
		\coordinate (rc) at (0.5,1.7);
		\coordinate (ao) at (0.5,2);
		
		\begin{knot}[clip width=4,clip radius =3pt]
		\strand [thick,blue] (inw) to [out=90,in=-90] (outlw) to (ulw);
		\strand [thick,blue] (ine) to [out=90,in=-90] (outre) to (ure);
		\strand [thick,blue] (ule) to (outle) to [out=-90,in=180] (mid);
		\strand[thick,blue] (mid) to [out=0,in=-90] (outrw) to (urw);
		\strand [thick] (a) to[out=90,in=-90] (lc) to [out=90,in=-90] (rc) to [out=90,in=-90] (ao);
		\end{knot}
		\end{tikzpicture}
	}
	=
	\hbox{
		\begin{tikzpicture}[baseline=(current  bounding  box.center)]
		
		\coordinate (lw) at (-0.05,-1);
		\coordinate (le) at (0.05,-1);
		
		\coordinate (inw) at (-0.05,0);
		\coordinate (ine) at (0.05,0);
		
		\coordinate (outlw) at (-0.35,1);
		\coordinate (outle) at (-0.25,1);
		
		\coordinate (outrw) at (0.25,1);
		\coordinate (outre) at (0.35,1);	
		
		\coordinate (mid) at (0,0.6);

		\coordinate (a) at (-0.5,-1);
		\coordinate (rc) at (0.5,0.3);
		\coordinate (ao) at (0.5,1);
		
		\begin{knot}[clip width=4,clip radius =3pt]
		\strand [thick,blue] (lw) to (inw) to [out=90,in=-90] (outlw);
		\strand [thick,blue] (le) to (ine) to [out=90,in=-90] (outre);
		\strand [thick,blue] (outle) to [out=-90,in=180] (mid) to [out=0,in=-90] (outrw);
		\strand [thick] (a) to[out=90,in=-90] (rc) to [out=90,in=-90] (ao);
		\end{knot}
		\end{tikzpicture}
	}
	\tn{ and }
	\hbox{
		\begin{tikzpicture}[baseline=(current  bounding  box.center)]
		
		\coordinate (inw) at (-0.05,1);
		\coordinate (ine) at (0.05,1);
		
		\coordinate (outlw) at (-0.35,0);
		\coordinate (outle) at (-0.25,0);
		
		\coordinate (outrw) at (0.25,0);
		\coordinate (outre) at (0.35,0);	
		
		\coordinate (mid) at (0,0.4);	
		
		\coordinate (ulw) at (-0.35,-1);
		\coordinate (ule) at (-0.25,-1);
		
		\coordinate (urw) at (0.25,-1);
		\coordinate (ure) at (0.35,-1);	
		
		\coordinate (a) at (-0.5,-1);
		\coordinate (lc) at (-0.5,-0.6);
		\coordinate (rc) at (0.5,0.2);
		\coordinate (ao) at (0.5,1);
		
		\begin{knot}[clip width=4,clip radius =3pt]
		\strand [thick,blue] (inw) to [out=-90,in=90] (outlw) to (ulw);
		\strand [thick,blue] (ine) to [out=-90,in=90] (outre) to (ure);
		\strand [thick,blue] (ule) to (outle) to [out=90,in=180] (mid);
		\strand [thick,blue] (mid) to [out=0,in=90] (outrw) to (urw);
		\strand [thick] (a) to [out=90,in=-90] (lc) to [out=90,in=-90] (rc) to [out=90,in=-90] (ao);
		\end{knot}
		\end{tikzpicture}
	}
	=
	\hbox{
		\begin{tikzpicture}[baseline=(current  bounding  box.center)]
		
		\coordinate (iw) at (-0.05,2);
		\coordinate (ie) at (0.05,2);

		\coordinate (inw) at (-0.05,1);
		\coordinate (ine) at (0.05,1);
		
		\coordinate (outlw) at (-0.35,0);
		\coordinate (outle) at (-0.25,0);
		
		\coordinate (outrw) at (0.25,0);
		\coordinate (outre) at (0.35,0);	
		
		\coordinate (mid) at (0,0.4);

		\coordinate (a) at (-0.5,0);
		\coordinate (lc) at (-0.5,0.7);
		\coordinate (rc) at (0.5,1.5);
		\coordinate (ao) at (0.5,2);
		
		\begin{knot}[clip width=4,clip radius =3pt]
		\strand [thick,blue] (iw) to (inw) to [out=-90,in=90] (outlw);
		\strand [thick,blue] (ie) to (ine) to [out=-90,in=90] (outre);
		\strand [thick,blue] (outle) to [out=90,in=180] (mid) to [out=0,in=90] (outrw);
		\strand [thick] (a) to [out=90,in=-90] (lc) to [out=90,in=-90] (rc) to [out=90,in=-90] (ao);
		\end{knot}
		\end{tikzpicture}
	}.
	$$
	Unpacking the definition (Equation \eqref{DCunithalfbraid}) of the half-braiding for $\mathbb{I}_s$, we see that we get for $a\in\cat{A}$:
	\begin{equation}\label{DCunpack}
	\hbox{
		\begin{tikzpicture}[baseline=(current  bounding  box.center)]

		\coordinate (inw) at (-0.05,0);
		\coordinate (ine) at (0.05,0);
		
		\coordinate (outlw) at (-0.35,1);
		\coordinate (outle) at (-0.25,1);
		
		\coordinate (outrw) at (0.25,1);
		\coordinate (outre) at (0.35,1);	
		
		\coordinate (mid) at (0,0.6);	
		
		\coordinate (ulw) at (-0.35,2);
		\coordinate (ule) at (-0.25,2);
		
		\coordinate (urw) at (0.25,2);
		\coordinate (ure) at (0.35,2);	
		
		\coordinate (a) at (-0.5,0);
		\coordinate (lc) at (-0.5,0.6);
		\coordinate (rc) at (0.5,1.7);
		\coordinate (ao) at (0.5,2);
		
		\begin{knot}[clip width=4,clip radius =3pt]
		\strand [thick,blue] (inw) to [out=90,in=-90] (outlw) to (ulw);
		\strand [thick,blue] (ine) to [out=90,in=-90] (outre) to (ure);
		\strand [thick,blue] (ule) to (outle) to [out=-90,in=180] (mid);
		\strand[thick,blue] (mid) to [out=0,in=-90] (outrw) to (urw);
		\strand [thick] (a) to[out=90,in=-90] (lc) to [out=90,in=-90] (rc) to [out=90,in=-90] (ao);
		\end{knot}
		\end{tikzpicture}
	}
	=
	\sum\limits_{j,k\in\cat{O}(\cat{A})}\sum\limits_{\phi,\phi'\in B(ak,j)}\frac{t_k}{D}
	\hbox{
		\begin{tikzpicture}[baseline=(current  bounding  box.center)]
		
		\node (inw) at (-0.5,-1){\strut$k$};
		\node (ine) at (0.05,-1){\strut$k^*$};
		
		\node (outw) at (-1,3){\strut$j$};
		\node (oute) at (1,3){\strut$j^*$};
		
		\node (iw) at (-0.5,3){\strut$j^*$};
		\node (ie) at (0.5,3){\strut$j$};
		
		\node (phid) at (-0.5,1)[draw, minimum width=10pt]{\strut$\phi^*$};
		\node (phi) at (0.5,1.5)[draw,minimum width=10pt]{\strut$\phi'$};
		
		\coordinate (lc) at (-1.5,0.7);
		\coordinate (rc) at (-0.3,2.2);
		
		\coordinate (mid) at (-0.2,0.2);
		
		\node (ain) at (-1.5,-1){\strut$a$};
		\node (aout) at (1.5,3){\strut$a$};
		
		\node (phin) at (-1,0.3)[draw, minimum width=10pt]{\strut$\phi$};
		\node (phio) at (1,2.2)[draw, minimum width=10pt]{\strut$\phi^{\prime*}$};
		
		\node (al) at (0,1){\strut$a$};
		\node (kl) at (-0.5,0){\strut$k$};
		
		\coordinate (rcc) at (1.5,1.8);
		
		\begin{knot}[clip width=4,clip radius =3pt]
		\strand [thick] (iw) to [out=-90,in=90] (phid);
		\strand [thick] (ie) to [out=-90,in=90] (phi);
		\strand [thick] (phid) to [out=-70,in=-110] (phi);
		\strand [thick] (phid) to [out=-110,in=180] (mid) to [out=0,in=-80] (phi);
		\strand [thick] (ine) to [out=80,in=-90] (phio);
		\strand [thick] (phio) to [out=-80,in=-90] (rcc) to [out=90,in=-90] (aout);
		\strand [thick] (phio) to [out=90,in=-90] (oute);
		\strand [thick] (inw) to [out=90,in=-80] (phin);
		\strand [thick] (ain) to [out=90,in=-100] (phin);
		\strand [thick] (phin) to [out=90,in=-90] (outw);
		\end{knot}
		\end{tikzpicture}
	}.
	\end{equation}
	We can manipulate the middle part of the summands to see:
	\begin{equation*}
	\hbox{
		\begin{tikzpicture}[baseline=(current  bounding  box.center)]
		
		\node (iw) at (-0.5,2.5){\strut$j^*$};
		\node (ie) at (0.5,2.5){\strut$j$};
		
		\node (phid) at (-0.5,1)[draw, minimum width=10pt]{\strut$\phi^*$};
		\node (phi) at (0.5,1.5)[draw,minimum width=10pt]{\strut$\phi'$};
		
		\coordinate (lc) at (-1.5,0.7);
		\coordinate (rc) at (-0.3,2.2);
		
		\coordinate (mid) at (0,0);
		
		\node (al) at (0,1){\strut$a$};
		\node (kl) at (0,-0.2){\strut$k$};
		
		\begin{knot}[clip width=4,clip radius =3pt]
		\strand [thick] (iw) to [out=-90,in=90] (phid);
		\strand [thick] (ie) to [out=-90,in=90] (phi);
		\strand [thick] (phid) to [out=-70,in=-110] (phi);
		\strand [thick] (phid) to [out=-110,in=180] (mid) to [out=0,in=-70] (phi);
		\end{knot}
		\end{tikzpicture}
	}
	=
	\hbox{
		\begin{tikzpicture}[baseline=(current  bounding  box.center)]
		
		\node (iw) at (-0.5,2.8){\strut$j^*$};
		\node (ie) at (0.5,2.8){\strut$j$};
		
		\node (phid) at (-0.5,0.8)[draw, minimum width=10pt]{\strut$\phi^t$};
		\node (phi) at (0.5,1.8)[draw,minimum width=10pt]{\strut$\phi'$};
		
		\coordinate (lc) at (-1,0.5);
		\coordinate (rc) at (-0.3,2.2);
		
		\coordinate (mid) at (0.5,0.7);
		
		\begin{knot}[clip width=4,clip radius =3pt]
		\strand [thick] (iw) to [out=-90,in=90] (lc) to [out=-90,in=-90] (phid);
		\strand [thick] (ie) to [out=-90,in=90] (phi);
		\strand [thick] (phid) to [out=110,in=-110] (phi);
		\strand [thick] (phid) to [out=70,in=180] (mid) to [out=0,in=-70] (phi);
		\end{knot}
		\end{tikzpicture}
	}
	=
	\delta_{\phi,\phi'}
	\hbox{
		\begin{tikzpicture}[baseline=(current  bounding  box.center)]
		
		\node (iw) at (-0.5,2.8){\strut$j^*$};
		\node (ie) at (0.5,2.8){\strut$j$};
		
		\coordinate (mid) at (0,1.7);
		
		\begin{knot}[clip width=4,clip radius =3pt]
		\strand [thick] (iw) to [out=-90,in=180] (mid) to [out=0,in=-90] (ie);
		\end{knot}
		\end{tikzpicture}
	},
	\end{equation*}
	where $\phi,\phi'\in B(ak,j)$. Plugging this into Equation \eqref{DCunpack}, we get:
	\begin{equation*}
	\sum\limits_{j,k\in\cat{O}(\cat{A})}\sum\limits_{\phi,\in B(ak,j)}\frac{t_k}{D}
	\hbox{
		\begin{tikzpicture}[baseline=(current  bounding  box.center)]
		
		\node (inw) at (-0.5,0){\strut$k$};
		\node (ine) at (0.05,0){\strut$k^*$};
		
		\node (outw) at (-1,3){\strut$j$};
		\node (oute) at (1,3){\strut$j^*$};
		
		\node (iw) at (-0.5,3){\strut$j^*$};
		\node (ie) at (0.5,3){\strut$j$};

		\coordinate (lc) at (-1.5,0.7);
		\coordinate (rc) at (-0.3,2.2);
		
		\coordinate (mid) at (-0.2,2.2);
		
		\node (ain) at (-1.5,0){\strut$a$};
		\node (aout) at (1.5,3){\strut$a$};
		
		\node (phin) at (-0.5,1.5)[draw, minimum width=10pt]{\strut$\phi$};
		\node (phio) at (0.5,1.5)[draw, minimum width=10pt]{\strut$\phi^{*}$};

		\coordinate (rcc) at (1,1);
		
		\begin{knot}[clip width=4,clip radius =3pt]
		\strand [thick] (iw) to [out=-90,in=180] (mid) to [out=0,in=-90] (ie);
		\strand [thick] (ine) to [out=90,in=-100] (phio);
		\strand [thick] (phio) to [out=-80,in=-90] (rcc) to [out=90,in=-90] (aout);
		\strand [thick] (phio) to [out=90,in=-90] (oute);
		\strand [thick] (inw) to [out=90,in=-80] (phin);
		\strand [thick] (ain) to [out=90,in=-100] (phin);
		\strand [thick] (phin) to [out=90,in=-90] (outw);
		\end{knot}
		\end{tikzpicture}
	}
	=
	\hbox{
		\begin{tikzpicture}[baseline=(current  bounding  box.center)]
		
		\coordinate (lw) at (-0.05,-1);
		\coordinate (le) at (0.05,-1);
		
		\coordinate (inw) at (-0.05,0);
		\coordinate (ine) at (0.05,0);
		
		\coordinate (outlw) at (-0.35,1);
		\coordinate (outle) at (-0.25,1);
		
		\coordinate (outrw) at (0.25,1);
		\coordinate (outre) at (0.35,1);	
		
		\coordinate (mid) at (0,0.6);

		\coordinate (a) at (-0.5,-1);
		\coordinate (rc) at (0.5,0.3);
		\coordinate (ao) at (0.5,1);
		
		\begin{knot}[clip width=4,clip radius =3pt]
		\strand [thick,blue] (lw) to (inw) to [out=90,in=-90] (outlw);
		\strand [thick,blue] (le) to (ine) to [out=90,in=-90] (outre);
		\strand [thick,blue] (outle) to [out=-90,in=180] (mid) to [out=0,in=-90] (outrw);
		\strand [thick] (a) to[out=90,in=-90] (rc) to [out=90,in=-90] (ao);
		\end{knot}
		\end{tikzpicture}
	}.
	\end{equation*}
	Similarly, we have:
	$$
	\hbox{
		\begin{tikzpicture}[baseline=(current  bounding  box.center)]

		\coordinate (inw) at (-0.05,2);
		\coordinate (ine) at (0.05,2);
		
		\coordinate (outlw) at (-0.35,1);
		\coordinate (outle) at (-0.25,1);
		
		\coordinate (outrw) at (0.25,1);
		\coordinate (outre) at (0.35,1);	
		
		\coordinate (mid) at (0,1.4);	
		
		\coordinate (ulw) at (-0.35,0);
		\coordinate (ule) at (-0.25,0);
		
		\coordinate (urw) at (0.25,0);
		\coordinate (ure) at (0.35,0);	
		
		\coordinate (a) at (-0.5,0);
		\coordinate (lc) at (-0.5,0.5);
		\coordinate (rc) at (0.5,1.5);
		\coordinate (ao) at (0.5,2);

		\begin{knot}[clip width=4,clip radius =3pt]
		\strand [thick,blue] (inw) to [out=-90,in=90] (outlw) to (ulw);
		\strand [thick,blue] (ine) to [out=-90,in=90] (outre) to (ure);
		\strand [thick,blue] (ule) to (outle) to [out=90,in=180] (mid);
		\strand[thick,blue] (mid) to [out=0,in=90] (outrw) to (urw);
		\strand [thick] (a) to[out=90,in=-90] (lc) to [out=90,in=-90] (rc) to [out=90,in=-90] (ao);
		\end{knot}
		\end{tikzpicture}
	}
	=
	\sum\limits_{j,k\in\cat{O}(\cat{A})}\sum\limits_{\phi,\phi'\in B(ak,j)}
	\hbox{
		\begin{tikzpicture}[baseline=(current  bounding  box.center)]
		
		\node (inw) at (-0.5,3){\strut$k$};
		\node (ine) at (0.05,3){\strut$k^*$};
		
		\node (outw) at (-1,-1){\strut$j$};
		\node (oute) at (1,-1){\strut$j^*$};
		
		\node (iw) at (-0.5,-1){\strut$j^*$};
		\node (ie) at (0.5,-1){\strut$j$};
		
		\node (phid) at (-0.5,1)[draw, minimum width=10pt]{\strut$\phi^*$};
		\node (phi) at (0.5,0.5)[draw,minimum width=10pt]{\strut$\phi'$};
		
		\coordinate (lc) at (-1.5,1.3);
		\coordinate (rc) at (-0.3,-0.2);
		
		\coordinate (mid) at (-0.2,1.8);
		
		\node (ain) at (-1.5,-1){\strut$a$};
		\node (aout) at (1.5,3){\strut$a$};
		
		\node (phin) at (-1,1.7)[draw, minimum width=10pt]{\strut$\phi$};
		\node (phio) at (1,-0.2)[draw, minimum width=10pt]{\strut$\phi^{\prime*}$};
		
		\node (al) at (0.15,1){\strut$a$};
		\node (kl) at (-0.4,2){\strut$k$};
		
		\coordinate (amidup) at (0,1.5);
		\coordinate (amidlow) at (0,0);

		\begin{knot}[clip width=4,clip radius =3pt]
		\strand [thick] (iw) to [out=90,in=-90] (phid);
		\strand [thick] (ie) to [out=90,in=-90] (phi);
		\strand [thick] (phid) to [out=70,in=90] (amidup) to [out=-90,in=90](amidlow) to [out=-90, in=-110] (phi);
		\strand [thick] (phid) to [out=110,in=180] (mid) to [out=0,in=80] (phi);
		\strand [thick] (ine) to [out=-80,in=90] (phio);
		\strand [thick] (phio) to [out=60,in=-90] (aout);
		\strand [thick] (phio) to [out=-90,in=90] (oute);
		\strand [thick] (inw) to [out=-90,in=80] (phin);
		\strand [thick] (ain) to [out=90,in=-100] (phin);
		\strand [thick] (phin) to [out=-90,in=90] (outw);
		\end{knot}
		\end{tikzpicture}
	}.
	$$
	We can again examine the middle part of this diagram to see:
	$$
	\hbox{
		\begin{tikzpicture}[baseline=(current  bounding  box.center)]
		
		\node (iw) at (-0.5,0){\strut$j^*$};
		\node (ie) at (0.5,0){\strut$j$};
		
		\node (phid) at (-0.5,1)[draw, minimum width=10pt]{\strut$\phi^*$};
		\node (phi) at (0.5,1.5)[draw,minimum width=10pt]{\strut$\phi'$};
		
		\coordinate (mid) at (0,2);
		
		\begin{knot}[clip width=4,clip radius =3pt]
		\strand [thick] (iw) to (iw) to [out=90,in=-90] (phid);
		\strand [thick] (ie) to [out=90,in=-90] (phi);
		\strand [thick] (phid) to [out=110,in=180] (mid) to [out=0,in=90] (phi);
		\strand [thick] (phid) to [out=70,in=-110] (phi);
		\end{knot}
		\end{tikzpicture}
	}
	=
	\hbox{
		\begin{tikzpicture}[baseline=(current  bounding  box.center)]
		
		\node (iw) at (-0.5,0){\strut$j^*$};
		\node (ie) at (0.5,0){\strut$j$};
		
		\node (phid) at (-0.5,1)[draw, minimum width=10pt]{\strut$\phi^t$};
		\node (phi) at (0.5,2)[draw,minimum width=10pt]{\strut$\phi'$};
		
		\coordinate (lc) at (-1,0.9);
		\coordinate (rc) at (0.1,1.1);
		
		\coordinate (mid) at (0,2.5);
		
		\begin{knot}[clip width=4,clip radius =3pt]
		\strand [thick] (iw) to [out=90,in=-90] (rc) to [out=90,in=70] (phid);
		\strand [thick] (ie) to [out=90,in=-90] (phi);
		\strand [thick] (phid) to [out=-90,in=-90] (lc) to [out=90,in=180] (mid) to [out=0,in=90] (phi);
		\strand [thick] (phid) to [out=110,in=-110] (phi);
		\end{knot}
		\end{tikzpicture}
	}=
	\hbox{
		\begin{tikzpicture}[baseline=(current  bounding  box.center)]
		
		\node (iw) at (-0.3,-0.5){\strut$j^*$};
		\node (ie) at (0.5,-0.5){\strut$j$};
		
		\node (phid) at (-1,2)[draw, minimum width=10pt]{\strut$\phi^t$};
		\node (phi) at (-1,1)[draw,minimum width=10pt]{\strut$\phi'$};
		
		\coordinate (lc) at (-1.5,0.7);
		\coordinate (rc) at (-0.3,2.2);
		
		\coordinate (twistmid) at (-1,1.5);
		
		\coordinate (mid) at (-1.5,2.3);
		
		\begin{knot}[clip width=4,clip radius =3pt]
		\strand [thick] (iw) to [out=90,in=-90] (rc) to [out=90,in=70] (phid);
		\strand [thick] (ie) to [out=90,in=-60] (phi);
		\strand [thick] (phid) to [out=-90,in=90] (phi);
		\strand [thick] (phid) to [out=110,in=90] (mid) to [out=-90,in=90] (lc) to [out=-90,in=-110] (phi);
		\end{knot}
		\fill (twistmid) circle[radius=2pt];
		\fill (lc) circle[radius=2pt];
		\end{tikzpicture},
	}
	$$
	where in the last equality we have moved $\phi^t$ up and $\phi'$ along the back. We can now use Equation \eqref{DCtwisttrick} to move the twists to the $j$ strand. Then, after pre-composing with the coevaluation $\mathbb{I}_c\rar j j^*$, we can view the last morphism as an endomorphism of the simple object $j$, so it is completely determined by its trace. This trace is computed by pre-composing the morphisms in the last diagram the morphism $\mathbb{I}\rar j^*j$ that is obtained by composing the coevaluation for $j^*$ and $j^{**}$ with the pivotal structure. After we cancel the self-intersection with the twist in the diagram thus obtained, we get that this computes as $\delta_{\phi,\phi'}$. This means that this morphism evaluates to $\delta_{\phi,\phi'}$ times the evaluation on $j^*j$. Plugging this in yields the desired relation.	
\end{proof}

\subsection{Coherence}
This section is devoted to proving that the morphisms defined in the previous section satisfy the coherence conditions from Definition \ref{DClax2fold}. This will establish Theorem \ref{DCdc2foldmon}.

\subsubsection{Unitor Coherence}
\begin{lem}
	The morphisms $u_0$ and $v_0$ satisfy the coherence diagrams from Definition \ref{DClax2fold}\eqref{DCunitor1}, where $1=c$, $2=s$ and $1=s$, $2=c$ respectively.
\end{lem}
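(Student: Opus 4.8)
The plan is to recognise that every square in Definition~\ref{lax2fold}\eqref{unitor1} is simply a naturality square for one of the four unitor isomorphisms, evaluated at the relevant compatibility morphism; once this is seen, there is nothing to compute.

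Consider first $u_0$ with the labelling $1=c$, $2=s$, so that $\mathbb{I}_1=\mathbb{I}_c$, $\mathbb{I}_2=\mathbb{I}_s$ and $u_0\colon\mathbb{I}_s\rar\mathbb{I}_c$. The square for $\rho_1=\rho_c$ asserts precisely that
$$
u_0\circ\rho_c=\rho_c\circ(u_0\otimes_c\id_{\mathbb{I}_c}),
$$
which is exactly the naturality of the right unitor $\rho_c$, viewed as a natural isomorphism from the functor $(-)\otimes_c\mathbb{I}_c$ to the identity functor on $\dcentcat{A}$, applied to the morphism $u_0$. The accompanying square for $\lambda_1=\lambda_c$ is the naturality of the left unitor for $\otimes_c$ at $u_0$, and the squares for $\lambda_2=\lambda_s$ and $\rho_2=\rho_s$ are the naturality squares of the left and right unitors for $\otimes_s$ at $u_0$, now with $\mathbb{I}_s$ as the fixed tensor factor so that these unitors are defined on the objects that appear. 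Since the unitors recalled in Definition~\ref{symtensobj} are genuine natural isomorphisms, with inverses given in Equation~\eqref{unitorinverses}, all four squares commute automatically.

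For $v_0$ I invoke the footnote to Definition~\ref{bilaxdef}: the oplax coherence is the lax coherence of the 2-fold structure with the roles of the two tensor products exchanged, i.e. with $1=s$, $2=c$. Under this relabelling $v_0\colon\mathbb{I}_c\rar\mathbb{I}_s$ occupies the slot of $u_0$, and the four squares become the naturality squares of the four unitors evaluated at $v_0$, hence commute for the identical reason. The only point requiring care is the bookkeeping: matching each diagram in \eqref{unitor1} to the correct unitor and checking that the factor held fixed is a unit for the tensor product whose unitor appears, so that this unitor is defined on the relevant objects. No property of $u_0$, $v_0$, the idempotent $\Pi_{c,d}$, or the half-braidings is used beyond the already-established fact that $u_0$ and $v_0$ are morphisms in $\dcentcat{A}$; accordingly I expect no genuine obstacle here, only the routine verification that the labels line up.
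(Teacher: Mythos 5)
Your proof is correct, but it takes a genuinely different route from the paper's. You identify every square in Definition \ref{lax2fold}\eqref{unitor1} as a naturality square of one of the four unitors evaluated at $u_0$ or $v_0$, so that the only substantive input is the preceding lemma that $u_0$ and $v_0$ are morphisms in $\dcentcat{A}$. The paper argues more concretely: for $u_0$ it simply notes that (with the $\otimes_c$-coherence data suppressed) both composites are $u_0$ on the nose, and for $v_0$ it conjugates the square by the invertible $\otimes_s$-unitor --- i.e.\ it compares $\lambda_s\circ(v_0\otimes_s\id)\circ\lambda_s^{-1}$ with $v_0$ --- and verifies the equality by an explicit string-diagram computation: the projection-followed-by-inclusion is replaced by the idempotent ring via Equation \eqref{inclprojprops}, and the ring is then removed using snapping (Lemma \ref{snapping}). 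The trade-off is where the naturality of $\lambda_s,\rho_s$ comes from. You import it wholesale as part of the monoidal structure of $(\dcentcat{A},\otimes_s)$ established in \cite{Wasserman2017}; note that the present paper never states this naturality explicitly (Definition \ref{symtensobj} only gives the unitors and their inverses), so your argument silently leans on the prior paper, whereas the paper's computation is self-contained in the string calculus and amounts to verifying by hand precisely the instance of naturality you invoke. Since Theorem \ref{dc2foldmon} presupposes that $\otimes_s$ is a tensor product, your appeal is legitimate and not circular, and it even has a small advantage: it treats uniformly the mixed squares (the $\otimes_s$-unitor squares for $u_0$ and the $\otimes_c$-unitor squares for $v_0$), which the paper's one-line dismissal of the $u_0$ case passes over rather quickly.
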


\begin{proof}
	For $u_0$, both routes through the diagram in \ref{DClax2fold}\eqref{DCunitor1} evaluate to $u_0$ directly, so there is nothing to prove. 
	
	For $v_0$, it is more convenient to compare $\mathbb{I}_c\xrightarrow{\lambda_s^{-1}}\mathbb{I}_c\otimes_s\mathbb{I}_s\xrightarrow{u_0\otimes_s \tn{id}}\mathbb{I}_s\otimes_s\mathbb{I}_s\xrightarrow{\lambda_s}\mathbb{I}_s$ to $v_0$. To do this, observe that, in string diagrams, this composite computes as:
	$$
	\hbox{
		\begin{tikzpicture}[baseline=(current  bounding  box.center)]
		
		\node (tr1n) at (0,1.2) {\strut$\bigtriangleup$};
		\coordinate (trd) at (0,1.35);
		\coordinate (truw) at (-0.1,1.12);
		\coordinate (true) at (0.1,1.12);
		\coordinate (trum) at (0,1.12);
		
		\node (tr1) at (0,2.1) {\strut$\bigtriangledown$};		
		\coordinate (tr1d) at (0,2);
		\coordinate (tr1uw) at (-0.1,2.2);
		\coordinate (tr1ue) at (0.1,2.2);
		\coordinate (tr1um) at (0,2.2);
		
		\coordinate (ctrl1) at (0.1,2.8);		
		
		\coordinate (ctrl) at (0.1,0.5);
		
		\coordinate (outw) at (-0.05,3.5);
		\coordinate (oute) at (0.05,3.5);
		\coordinate (mid) at (0,3);
		
		\begin{knot}[clip width=4, clip radius = 3pt]
		\strand [thick] (tr1d) to (trd);
		\strand [thick,blue] (trum) to [out=-120, in=0] (ctrl) to [out=180,in=-120] (truw);
		\strand [thick,blue] (tr1um) to [out=120, in=0] (ctrl1) to [out=180,in=120] (tr1uw);
		\strand [thick, blue] (outw) to [out=-90,in=180] (mid) to [out=0,in=-90] (oute);
		\end{knot}
		\end{tikzpicture}
	}=
	\hbox{
		\begin{tikzpicture}[baseline=(current  bounding  box.center)]
		
		\coordinate (west) at (-0.5,1.6);
		\coordinate (north) at (0,1.9);
		\coordinate (east) at (0.5,1.6);
		\coordinate (south) at (0,1.3);

		\coordinate (truw) at (-0.1,1.12);
		\coordinate (trum) at (0,1.12);
		
		\coordinate (tr1uw) at (-0.1,2.2);
		\coordinate (tr1um) at (0,2.2);
		
		\coordinate (ctrl1) at (0.1,2.8);		
		
		\coordinate (ctrl) at (0.1,0.5);
		
		\coordinate (outw) at (-0.05,3.5);
		\coordinate (oute) at (0.05,3.5);
		\coordinate (mid) at (0,3);
		
		\begin{knot}[clip width=4, clip radius = 3pt]
		\strand [thick,blue] (tr1um) to [out=90,in=-90] (trum) to [out=-90, in=0] (ctrl) to [out=180,in=-90] (truw) to [out=-90,in=90] (tr1uw);
		\strand [thick,blue] (tr1um) to [out=90, in=0] (ctrl1) to [out=180,in=90] (tr1uw);
		\strand [thick, blue] (outw) to [out=-90,in=180] (mid) to [out=0,in=-90] (oute);
		\strand [blue, thick] (west)
		to [out=90,in=-180] (north)
		to [out=0,in=90] (east)
		to [out=-90,in=0] (south)
		to [out=-180,in=-90] (west);
		\flipcrossings{}
		\end{knot}
		\end{tikzpicture}
	}=
	\hbox{
		\begin{tikzpicture}[baseline=(current  bounding  box.center)]
		
		\coordinate (outw) at (-0.05,3.5);
		\coordinate (oute) at (0.05,3.5);
		\coordinate (mid) at (0,3);
		
		\begin{knot}[clip width=4, clip radius = 3pt]
		\strand [thick, blue] (outw) to [out=-90,in=180] (mid) to [out=0,in=-90] (oute);
		\end{knot}
		\end{tikzpicture}
	},
	$$
	where we have applied Equation \eqref{DCinclprojprops} followed by snapping (Lemma \ref{DCsnapping}) and the fact that a loose ring evaluates to 1.
\end{proof}

\begin{lem}
	The morphisms $\eta,u_2$ make the diagrams from Definition \ref{DClax2fold}\eqref{DCunitor4} commute for $\otimes_1=\otimes_c$ and $\otimes_2=\otimes_c$. Analogously, the morphisms $\zeta$ and $v_1$ make the diagrams from Definitions \ref{DClax2fold}\eqref{DCunitor3} commute for $\otimes_1=\otimes_s$ and $\otimes_2=\otimes_c$.	
\end{lem}

\begin{proof}
	Consider the anti-clockwise composite in the diagram from Definition \ref{DClax2fold}\eqref{DCunitor4}:
	\begin{align*}
	d \otimes_c d' \xrightarrow{\lambda_s^{-1}}& \mathbb{I}_s \otimes_s (d \otimes_c d') \xrightarrow{u_2\otimes_s \tn{id}} (\mathbb{I}_s\otimes_c\mathbb{I}_s)\otimes_s(d\otimes_c d)\\
	& \xrightarrow{\eta_{\mathbb{I}_s,\mathbb{I}_s,d,d'}} (\mathbb{I}_s\otimes_sd)\otimes_c (\mathbb{I}_s \otimes_s d') \xrightarrow{\lambda_s\otimes_s\lambda_s}d\otimes_c d'.
	\end{align*}
	In terms of string diagrams, replacing inclusions followed by projections by the idempotent using Equation \eqref{DCinclprojprops} right away, this becomes:
	$$
	\hbox{
		\begin{tikzpicture}[baseline=(current  bounding  box.center)]
		
		\node (din) at (0.5, -0.6){\strut$d$};
		\node (dpin) at (1,-0.6){\strut$d'$};
		
		\coordinate (ctrl1) at (1.2,0.5);
		
		\coordinate (iw1) at (-0.05,1);
		\coordinate (ie1) at (0.05,1);
		
		\coordinate (iw2) at (-0.05,1.6);
		\coordinate (ie2) at (0.05,1.6);
		
		\coordinate (mid) at (0,2.1);
		
		\coordinate (ilw1) at (-0.35,2.6);
		\coordinate (ile1) at (-0.25,2.6);
		\coordinate (irw1) at (0.25,2.6);
		\coordinate (ire1) at (0.35,2.6);
		
		\coordinate (ilw1a) at (-0.35,3);
		\coordinate (ile1a) at (-0.25,3);
		\coordinate (irw1a) at (0.25,3);
		\coordinate (ire1a) at (0.35,3);
		\coordinate (dl) at (0.5,3.25);

		\coordinate (du) at (0,3.8);
		\coordinate (irw2) at (0.65,3.8);
		\coordinate (ire2) at (0.75,3.8);
		
		\coordinate (ilw3) at (-0.35,5);
		\coordinate (ile3) at (-0.25,5);
		\coordinate (irw3) at (0.65,5);
		\coordinate (ire3) at (0.75,5);
		
		\coordinate (ucw) at (0.2,5.5);
		\coordinate (uce) at (1.2,5.5);
		
		\coordinate (dout) at (0,6.3);
		\coordinate (dpout) at (1,6.3);
		
		\coordinate (west1) at (-0.5,1.3);
		\coordinate (north1) at (0.5,1.6);
		\coordinate (east1) at (1.5,1.3);
		\coordinate (south1) at (0.5,1.0);
		
		\coordinate (west2) at (-0.6,2.6);
		\coordinate (north2) at (0.5,2.75);
		\coordinate (east2) at (1.3,2.6);
		\coordinate (south2) at (0.5,2.4);
		
		\coordinate (west3) at (-0.6,4.6);
		\coordinate (north3) at (-0.2,4.9);
		\coordinate (east3) at (0.2,4.6);
		\coordinate (south3) at (-0.2,4.3);
		
		\coordinate (west4) at (0.4,4.6);
		\coordinate (north4) at (1,4.9);
		\coordinate (east4) at (1.3,4.6);
		\coordinate (south4) at (1,4.3);

		\begin{knot}[clip width=4,clip radius =3.5pt]
		\strand [thick] (dpin) to [out=90,in=-90] (dpout);
		\strand [thick] (din) to [out=90,in=-90] (dl);
		\strand [thick] (dl) to [out=90,in=-90] (du) to [out=90,in=-90] (dout);
		\strand [thick,blue] (mid) to [out=0, in=-90] (irw1) to [out=90,in=-90] (irw1a) to [out=90,in=-90] (irw2) to [out=90,in=-90] (irw3) to [out=90,in=90] (uce) to [out=-90,in=90] (ire3) to [out=-90,in=90] (ire2) to [out=-90,in=90] (ire1a) to [out=-90,in=90] (ire1) to [out=-90,in=90] (ie2) to [out=-90,in=90] (ie1) to [out=-90,in=90] (ctrl1) to [out=-90,in=-90] (iw1) to [out=90,in=-90] (iw2) to [out=90,in=-90] (ilw1) to [out=90,in=-90] (ilw1a) to [out=90,in=-90] (ilw3) to [out=90,in=90] (ucw) to [out=-90,in=90] (ile3) to [out=-90,in=90] (ile1a) to [out=-90,in=90] (ile1) to [out=-90,in=180] (mid);
		\strand [blue, thick] (west1) to [out=90,in=-180] (north1) to [out=0,in=90] (east1) to [out=-90,in=0] (south1) to [out=-180,in=-90] (west1);
		\strand [blue, thick] (west2) to [out=90,in=-180] (north2) to [out=0,in=90] (east2) to [out=-90,in=0] (south2) to [out=-180,in=-90] (west2);
		\strand [blue, thick] (west3) to [out=90,in=-180] (north3) to [out=0,in=90] (east3) to [out=-90,in=0] (south3) to [out=-180,in=-90] (west3);
		\strand [blue, thick] (west4) to [out=90,in=-180] (north4) to [out=0,in=90] (east4) to [out=-90,in=0] (south4) to [out=-180,in=-90] (west4);
		\flipcrossings{2,19,32,34,9,38,36,29,25,28,7,15,21,5,13,11,3,18,22,23}
		\end{knot}
		\end{tikzpicture}
	}
	=
	\hbox{
		\begin{tikzpicture}[baseline=(current  bounding  box.center)]
		
		\node (din) at (0.5, -0.6){\strut$d$};
		\node (dpin) at (1,-0.6){\strut$d'$};
		
		\coordinate (ctrl1) at (1.2,0.5);
		
		\coordinate (iw1) at (-0.05,1);
		\coordinate (ie1) at (0.05,1);
		
		\coordinate (iw2) at (-0.05,1.6);
		\coordinate (ie2) at (0.05,1.6);
		
		\coordinate (mid) at (0,2.1);
		
		\coordinate (ilw1) at (-0.35,2.6);
		\coordinate (ile1) at (-0.25,2.6);
		\coordinate (irw1) at (0.25,2.6);
		\coordinate (ire1) at (0.35,2.6);
		
		\coordinate (ilw1a) at (-0.35,3);
		\coordinate (ile1a) at (-0.25,3);
		\coordinate (irw1a) at (0.25,3);
		\coordinate (ire1a) at (0.35,3);
		\coordinate (dl) at (0.5,3.25);

		\coordinate (du) at (0,3.8);
		\coordinate (irw2) at (0.65,3.8);
		\coordinate (ire2) at (0.75,3.8);
		
		\coordinate (ilw3) at (-0.35,5);
		\coordinate (ile3) at (-0.25,5);
		\coordinate (irw3) at (0.65,5);
		\coordinate (ire3) at (0.75,5);
		
		\coordinate (ucw) at (0.2,5.5);
		\coordinate (uce) at (1.2,5.5);
		
		\coordinate (dout) at (0,6.3);
		\coordinate (dpout) at (1,6.3);

		\begin{knot}[clip width=4,clip radius =3pt]
		\strand [thick] (dpin) to [out=90,in=-90] (dpout);
		\strand [thick] (din) to [out=90,in=-90] (dl);
		\strand [thick] (dl) to [out=90,in=-90] (du) to [out=90,in=-90] (dout);
		\strand [thick,blue] (mid) to [out=0, in=-90] (irw1) to [out=90,in=-90] (irw1a) to [out=90,in=-90] (irw2) to [out=90,in=-90] (irw3) to [out=90,in=90] (uce) to [out=-90,in=90] (ire3) to [out=-90,in=90] (ire2) to [out=-90,in=90] (ire1a) to [out=-90,in=90] (ire1) to [out=-90,in=90] (ie2) to [out=-90,in=90] (ie1) to [out=-90,in=90] (ctrl1) to [out=-90,in=-90] (iw1) to [out=90,in=-90] (iw2) to [out=90,in=-90] (ilw1) to [out=90,in=-90] (ilw1a) to [out=90,in=-90] (ilw3) to [out=90,in=90] (ucw) to [out=-90,in=90] (ile3) to [out=-90,in=90] (ile1a) to [out=-90,in=90] (ile1) to [out=-90,in=180] (mid);
		\flipcrossings{8,2,5,3}
		\end{knot}
		\end{tikzpicture}
	}
	=
	\hbox{
		\begin{tikzpicture}[baseline=(current  bounding  box.center)]
		
		\node (din) at (0.5, -0.5){\strut$d$};
		\node (dpin) at (1,-0.5){\strut$d'$};
		
		\coordinate (dout) at (0.5,6.3);
		\coordinate (dpout) at (1,6.3);

		\begin{knot}[clip width=4,clip radius =3pt]
		\strand [thick] (dpin) to [out=90,in=-90] (dpout);
		\strand [thick] (din) to [out=90,in=-90] (dout);
		\end{knot}
		\end{tikzpicture}
	}.
	$$
	To see the first equality, first observe that snapping (Lemma \ref{DCsnapping}) for the top right ring results in a free ring and the winding in the top right of the second diagram. The free ring evaluates to 1, leaving us with the winding. Now observe that we can use a similar trick for all the other rings, where we do have to move the middle ring down first.
	The second equality is unwinding the loop, using that overcrossing for the loop is the symmetry in $\cat{A}$, hence the same as an unresolved crossing (see Equation \eqref{DCtransparancy}). Reading the diagrams top to bottom yields a proof for the case of $\zeta$ and $v_1$.
\end{proof}

\subsubsection{Associator Coherence}
We now proceed to check coherence between the associators and the (op)lax structure.

\begin{lem}
	The morphisms $u_1$ and $u_2$ satisfy the coherence diagrams from Definition \ref{DClax2fold}\eqref{DCunitor2} for $\otimes_1=\otimes_c$ and $\otimes_2=\otimes_s$. Furthermore, the morphisms $v_1$ and $v_2$ satisfy the coherence diagrams from Definition \ref{DClax2fold}\eqref{DCunitor2} for $\otimes_1=\otimes_s$ and $\otimes_2=\otimes_c$.
\end{lem}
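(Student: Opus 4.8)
The plan is to split the four commuting squares into two groups according to which associator occurs. For the lax data $(\otimes_1,\otimes_2)=(\otimes_c,\otimes_s)$, the two instances of \eqref{unitor2} are the associativity square for $u_1\colon\mathbb{I}_c\otimes_s\mathbb{I}_c\to\mathbb{I}_c$ (using $\alpha_s$) and the coassociativity square for $u_2\colon\mathbb{I}_s\to\mathbb{I}_s\otimes_c\mathbb{I}_s$ (using $\alpha_c$). For the oplax data $(\otimes_1,\otimes_2)=(\otimes_s,\otimes_c)$, the morphism playing the role of $u_1$ is $v_1\colon\mathbb{I}_s\otimes_c\mathbb{I}_s\to\mathbb{I}_s$ (associativity, using $\alpha_c$) and the one playing the role of $u_2$ is $v_2\colon\mathbb{I}_c\to\mathbb{I}_c\otimes_s\mathbb{I}_c$ (coassociativity, using $\alpha_s$). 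Grouping by associator, I must verify two $\alpha_s$-diagrams (for $u_1$ and $v_2$) and two $\alpha_c$-diagrams (for $u_2$ and $v_1$).

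The $\alpha_s$-diagrams are essentially automatic. Since $u_1$ and $v_2$ are mutually inverse isomorphisms identifying $\mathbb{I}_c\otimes_s\mathbb{I}_c$ with $\mathbb{I}_c$, and $\mathbb{I}_c$ is the tensor unit of $\cat{A}$ viewed inside $\dcentcat{A}$, on which $\otimes_s$ restricts to the tensor product of $\cat{A}$ with associator $\alpha_s$ induced from $\alpha_\cat{A}$, the map $u_1$ is simply the unitor $\mathbb{I}\otimes\mathbb{I}\to\mathbb{I}$ of $\cat{A}$. The $u_1$-square is then exactly the standard compatibility of the unit's unitor with $\alpha_\cat{A}$, an instance of Mac Lane coherence (a double application of the triangle axiom together with $\lambda_\mathbb{I}=\rho_\mathbb{I}$). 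As $v_2=u_1^{-1}$, its coassociativity square is obtained by reversing every arrow in the $u_1$-square, so it commutes as well.

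For the two $\alpha_c$-diagrams I would compute directly from the formulas in Lemma \ref{u2v1def}. The crucial observation is that $u_2$ is \emph{diagonal}: on the simple summand $i\otimes i^*$ of $\mathbb{I}_s$ it lands in $(i\otimes i^*)\otimes_c(i\otimes i^*)$ with coefficient $t_i/D$, inserting a single coevaluation $\mathbb{I}\to i^*\otimes i$ between the two inner legs. Hence both $(u_2\otimes_c\id)\circ u_2$ and $(\id\otimes_c u_2)\circ u_2$ restrict to the $i$-summand with the identical scalar $t_i^2/D^2=1/D^2$, the twist squaring to $1$ on simples because $\cat{A}$ is symmetric (cf.\ the discussion around \eqref{bulletdef}), and each inserts two nested coevaluations. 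What remains is the coassociativity of the comultiplication on $i\otimes i^*$ obtained by inserting coevaluations, which holds by isotoping the two coevaluation strands past each other, with $\alpha_c$ accounting precisely for the rebracketing. The $v_1$-square is handled by the mirror computation read top to bottom: $v_1$ contracts the adjacent inner legs of two $\mathbb{I}_s$-factors by an evaluation, the Kronecker delta $\delta_{i,j^*}$ enforces the matching of summands, and associativity follows from the snake relation; here the coefficients are products of deltas, so no scalars can mismatch.

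The main obstacle is the bookkeeping in the $u_2$ and $v_1$ computations: one must confirm that the simple-object grading is preserved so that the double sum over simples collapses to a single index, that the twist scalars enter symmetrically on the two sides so that $t_i^2=1$ cancels them, and that the reassociation of the nested (co)evaluations is implemented exactly by $\alpha_c$. Once the diagonal form of $u_2$ and the grading behaviour of $v_1$ are recorded, each square reduces to an isotopy of (co)evaluation strands and is routine.
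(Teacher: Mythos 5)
Your proposal is correct, and it rests on the same two ingredients as the paper's proof: Mac Lane coherence in $\cat{A}$ for the squares involving the canonical isomorphism $\mathbb{I}_c\otimes_s\mathbb{I}_c\cong\mathbb{I}_c$, and a direct string-diagram computation from the explicit formulas of Lemma \ref{u2v1def} for the squares involving $\mathbb{I}_s$. The difference is in how the four squares are distributed between these two techniques. The paper declares the $u_1$ \emph{and} $v_1$ squares to need no proof, writes out the $u_2$ square (showing that $(u_2\otimes_c\tn{id})\circ u_2$ equals its own mirror image, which is $(\tn{id}\otimes_c u_2)\circ u_2$), and dismisses $v_2$ as ``similar, remembering that $\alpha_s$ is induced from $\cat{A}$''. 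You instead group by associator: the two $\alpha_s$-squares ($u_1$ and $v_2$) are dispatched by coherence in $\cat{A}$ together with the observation that $v_2=u_1^{-1}$, so that the $v_2$ square is the $u_1$ square with all (invertible) arrows reversed; the two $\alpha_c$-squares ($u_2$ and $v_1$) are verified summand by summand, where each composite inserts (respectively contracts) two cups (respectively caps) on disjoint pairs of strands with matching coefficients. Your grouping is arguably the more systematic one: $v_1$ is a genuinely non-invertible cap-contraction exactly mirror to $u_2$, not a canonical isomorphism, so the paper's ``nothing to prove'' for $v_1$ is really shorthand for the same diagram identity you verify explicitly, while your arrow-reversal argument for $v_2$ is cleaner than re-running a computation. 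Both routes are valid and establish the lemma.

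Two small inaccuracies in your write-up, neither of which affects the argument. First, no snake relation enters the $v_1$ square: both composites $v_1\circ(v_1\otimes_c\tn{id})$ and $v_1\circ(\tn{id}\otimes_c v_1)\circ\alpha_c$ consist of two evaluation caps on disjoint pairs of strands (positions $(2,3)$ and $(4,5)$ of the six strands), so they agree by the interchange law and planar isotopy alone; there is no cup to cancel against a cap. Second, $t_i^2=1$ is not needed for the $u_2$ square to commute, since both composites carry the identical factor $(t_i/D)^2$ whatever its value; the symmetry of $\cat{A}$ only matters if you wish to evaluate that scalar as $1/D^2$.
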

\begin{proof}
	For $u_1$ and $v_1$, there is nothing to prove. For $u_2$, we have to check that:
	\begin{center}
		\begin{tikzcd}
			(\mathbb{I}_1\otimes_2 \mathbb{I}_1)\otimes_2 \mathbb{I}_1 \arrow[rr,"\alpha_2"]& &\mathbb{I}_1\otimes_2(\mathbb{I}_1\otimes_2 \mathbb{I}_1)\\
			\mathbb{I}_1 \otimes_2 \mathbb{I}_1\arrow[u,"u_2\otimes_2\tn{id}"] \arrow[r, "u_1"] & \mathbb{I}_1& \mathbb{I}_1 \otimes_2 \mathbb{I}_1\arrow[l,"u_1"]\arrow[u,"\tn{id}\otimes_2 u_2"]
		\end{tikzcd}
	\end{center}
	commutes. In terms of string diagrams, this becomes:
	$$
	\hbox{
		\begin{tikzpicture}[baseline=(current  bounding  box.center)]

		\coordinate (inw) at (-0.05,0);
		\coordinate (ine) at (0.05,0);
		
		\coordinate (outlw) at (-0.35,1);
		\coordinate (outle) at (-0.25,1);
		
		\coordinate (outrw) at (0.25,1);
		\coordinate (outre) at (0.35,1);	
		
		\coordinate (ooww) at (-0.7,2);
		\coordinate (oowe) at (-0.6,2);
		
		\coordinate (oomw) at (-0.05,2);
		\coordinate (oome) at (0.05,2);
		
		\coordinate (ooew) at (0.25,2);
		\coordinate (ooee) at (0.35,2);
		
		\coordinate (mid) at (0,0.6);	
		\coordinate (mu) at (-0.3,1.6);
		
		\begin{knot}[clip width=4,clip radius =3pt]
		\strand [thick,blue] (inw) to [out=90,in=-90] (outlw) to [out=90,in=-90] (ooww);
		\strand [thick,blue] (ine) to [out=90,in=-90] (outre) to [out=90,in=-90] (ooee);
		\strand [thick,blue] (oome) to [out=-90,in=90] (outle) to [out=-90,in=180] (mid) to [out=0,in=-90] (outrw) to [out=90,in=-90] (ooew);
		\strand [thick,blue] (oowe) to [out=-90,in=180]  (mu) to [out=0,in=-90] (oomw);
		\end{knot}
		\end{tikzpicture}
	}
	=\reflectbox{
		\hbox{
			\begin{tikzpicture}[baseline=(current  bounding  box.center)]

			\coordinate (inw) at (-0.05,0);
			\coordinate (ine) at (0.05,0);
			
			\coordinate (outlw) at (-0.35,1);
			\coordinate (outle) at (-0.25,1);
			
			\coordinate (outrw) at (0.25,1);
			\coordinate (outre) at (0.35,1);	
			
			\coordinate (ooww) at (-0.7,2);
			\coordinate (oowe) at (-0.6,2);
			
			\coordinate (oomw) at (-0.05,2);
			\coordinate (oome) at (0.05,2);
			
			\coordinate (ooew) at (0.25,2);
			\coordinate (ooee) at (0.35,2);
			
			\coordinate (mid) at (0,0.6);	
			\coordinate (mu) at (-0.3,1.6);
			
			\begin{knot}[clip width=4,clip radius =3pt]
			\strand [thick,blue] (inw) to [out=90,in=-90] (outlw) to [out=90,in=-90] (ooww);
			\strand [thick,blue] (ine) to [out=90,in=-90] (outre) to [out=90,in=-90] (ooee);
			\strand [thick,blue] (oome) to [out=-90,in=90] (outle) to [out=-90,in=180] (mid) to [out=0,in=-90] (outrw) to [out=90,in=-90] (ooew);
			\strand [thick,blue] (oowe) to [out=-90,in=180]  (mu) to [out=0,in=-90] (oomw);
			\end{knot}
			\end{tikzpicture}
		}
	}.
	$$
	The proof for $v_2$ proceeds similarly, remembering that the associators for $\otimes_s$ are induced from the associators of $\cat{A}$.
\end{proof}

\begin{lem}
	The morphisms $\eta,u_1$ make the diagrams from Definitions \ref{DClax2fold}\eqref{DCunitor3} commute for $\otimes_1=\otimes_c$ and $\otimes_2=\otimes_s$. Analogously, the morphisms $\zeta$ and $v_2$ make the diagrams from Definitions \ref{DClax2fold}\eqref{DCunitor3} commute for $\otimes_1=\otimes_s$ and $\otimes_2=\otimes_c$.
\end{lem}

\begin{proof}
	As $u_1$ and $v_1$ are the isomorphisms between $\mathbb{I}_c$ and $\mathbb{I}_c\otimes_s\mathbb{I}_c$, there is nothing to prove.
\end{proof}

\begin{lem}\label{DCcohef}
	The natural transformation $\eta$ makes the diagrams from Definitions \ref{DClax2fold}\eqref{DCtensor1} commute for $\otimes_1=\otimes_c$ and $\otimes_2=\otimes_s$. Analogously, the morphisms $\zeta$ make the diagrams from Definitions \ref{DClax2fold}\eqref{DCtensor2} commute for $\otimes_1=\otimes_s$ and $\otimes_2=\otimes_c$.	
\end{lem}

\begin{proof}
	For the first case, we compute the anti-clockwise composite from the top-right corner to the bottom-right corner:
	$$
	\hbox{
		\begin{tikzpicture}[baseline=(current  bounding  box.center)]
		
		\coordinate (in) at (0,0);
		
		\node (tr1) at (0,0.6) {\strut$\bigtriangledown$};
		\coordinate (tr1d) at (0,0.5);
		\coordinate (tr1uww) at (-0.15,0.7);
		\coordinate (tr1uw) at (-0.1,0.7);
		\coordinate (tr1ue) at (0.1,0.7);

		\node (tr2) at (0.15,1.1) {\strut$\bigtriangledown$};
		\coordinate (tr2d) at (0.15,1);
		\coordinate (tr2uww) at (0,1.2);
		\coordinate (tr2uw) at (0.05,1.2);
		\coordinate (tr2ue) at (0.25,1.2);
		\coordinate (tr2uee) at (0.3,1.2);

		\node (tr3) at (-0.1,1.6) {\strut$\bigtriangleup$};
		\coordinate (tr3dww) at (-0.25,1.52);
		\coordinate (tr3dw) at (-0.2,1.52);
		\coordinate (tr3de) at (-0.0,1.52);
		\coordinate (tr3dee) at (0.05,1.52);
		\coordinate (tr3u) at (-0.1,1.75);
		
		\node (tr4) at (0,2.2) {\strut$\bigtriangleup$};
		\coordinate (tr4dw) at (-0.15,2.12);
		\coordinate (tr4de) at (0.1,2.12);
		\coordinate (tr4dee) at (0.15,2.12);
		\coordinate (tr4u) at (0,2.35);
		
		\node (tr5) at (0,2.6) {\strut$\bigtriangledown$};
		\coordinate (tr5d) at (0,2.5);
		\coordinate (tr5uw) at (-0.15,2.7);
		\coordinate (tr5ue) at (0.1,2.7);
		\coordinate (tr5uee) at (0.15,2.7);
		
		\node (tr6) at (-0.15,3.2) {\strut$\bigtriangledown$};
		\coordinate (tr6d) at (-0.15,3.1);
		\coordinate (tr6uww) at (-0.3,3.3);
		\coordinate (tr6uw) at (-0.25,3.3);
		\coordinate (tr6ue) at (-0.05,3.3);
		\coordinate (tr6uee) at (0,3.3);
		
		\node (trl7) at (-0.3,3.8) {\strut$\bigtriangleup$};
		\coordinate (trl7dw) at (-0.4,3.72);
		\coordinate (trl7de) at (-0.2,3.72);
		\coordinate (trl7u) at (-0.3,3.95);
		
		\node (trr7) at (0,3.8) {\strut$\bigtriangleup$};
		\coordinate (trr7dw) at (-0.1,3.72);
		\coordinate (trr7de) at (0.1,3.72);
		\coordinate (trr7u) at (0,3.95);
		
		\coordinate (mw) at (0.25,3.9);
		\coordinate (me) at (0.3,3.9);
		
		\node (trl8) at (-0.2,4.5) {\strut$\bigtriangleup$};
		\coordinate (trl8dw) at (-0.3,4.42);
		\coordinate (trl8de) at (-0.1,4.42);
		\coordinate (trl8u) at (-0.2,4.65);
		
		\node (trr8) at (0.2,4.5) {\strut$\bigtriangleup$};
		\coordinate (trr8dw) at (0.1,4.42);
		\coordinate (trr8de) at (0.3,4.42);
		\coordinate (trr8u) at (0.2,4.65);
		
		\node (trl9) at (-0.2,5) {\strut$\bigtriangledown$};
		\coordinate (trl9d) at (-0.2,4.9);
		\coordinate (trl9uw) at (-0.3,5.1);
		\coordinate (trl9ue) at (-0.1,5.1);
		
		\node (trr9) at (0.2,5) {\strut$\bigtriangledown$};
		\coordinate (trr9d) at (0.2,4.9);
		\coordinate (trr9uw) at (0.1,5.1);
		\coordinate (trr9ue) at (0.3,5.1);
		
		\node (trl10) at (-0.3,5.5) {\strut$\bigtriangledown$};
		\coordinate (trl10d) at (-0.3,5.4);
		\coordinate (trl10uw) at (-0.4,5.6);
		\coordinate (trl10ue) at (-0.2,5.6);
		
		\node (trr10) at (0.1,5.5) {\strut$\bigtriangledown$};
		\coordinate (trr10d) at (0.1,5.4);
		\coordinate (trr10uw) at (0,5.6);
		\coordinate (trr10ue) at (0.2,5.6);
		
		\node (trl11) at (-0.15,6) {\strut$\bigtriangleup$};
		\coordinate (trl11dw) at (-0.25,5.92);
		\coordinate (trl11de) at (-0.05,5.92);
		\coordinate (trl11u) at (-0.15,6.15);
		
		\node (trr11) at (0.3,6) {\strut$\bigtriangleup$};
		\coordinate (trr11dw) at (0.2,5.92);
		\coordinate (trr11de) at (0.4,5.92);
		\coordinate (trr11u) at (0.3,6.15);
		
		\node (trl12) at (-0.25,6.5) {\strut$\bigtriangleup$};
		\coordinate (trl12dw) at (-0.35,6.42);
		\coordinate (trl12de) at (-0.15,6.42);
		\coordinate (trl12u) at (-0.25,6.65);
		
		\node (trr12) at (0.2,6.5) {\strut$\bigtriangleup$};
		\coordinate (trr12dw) at (0.1,6.42);
		\coordinate (trr12de) at (0.3,6.42);
		\coordinate (trr12u) at (0.2,6.65);
		
		\coordinate (outw) at (-0.25, 7) ;
		\coordinate (oute) at (0.2, 7) ;				
		
		\begin{knot}[clip width=4,clip radius =3pt]
		\strand [thick] (in) to (tr1d);
		\strand [thick] (tr1uww) to [out=90,in=-90] (tr3dww);
		\strand [thick] (tr1uw) to [out=90,in=-90] (tr3dw);
		\strand [thick] (tr1ue) to [out=90,in=-90] (tr2d);
		\strand [thick] (tr2uww) to [out=90,in=-90] (tr3de);
		\strand [thick] (tr2uw) to [out=90,in=-90] (tr3dee);
		\strand [thick] (tr2ue) to [out=90,in=-90] (tr4de);
		\strand [thick] (tr2uee) to [out=90,in=-90] (tr4dee);
		\strand [thick] (tr3u) to [out=90,in=-90] (tr4dw);
		\strand [thick] (tr4u) to (tr5d);
		\strand [thick] (tr5uw) to [out=90,in=-90] (tr6d);
		\strand [thick] (tr6uww) to [out=90,in=-90] (trl7dw);
		\strand [thick] (tr6uw) to [out=90,in=-90] (trr7dw);
		\strand [thick] (tr6ue) to [out=90,in=-90] (trl7de);
		\strand [thick] (tr6uee) to [out=90,in=-90] (trr7de);
		\strand [thick] (tr5ue) to [out=90,in=-90] (mw);
		\strand [thick] (tr5uee) to [out=90,in=-90] (me);
		\strand [thick] (trl7u) to [out=90,in=-90] (trl8dw);
		\strand [thick] (trr7u) to [out=90,in=-90] (trr8dw);
		\strand [thick] (mw) to [out=90,in=-90] (trl8de);
		\strand [thick] (me) to [out=90,in=-90] (trr8de);
		\strand [thick] (trl8u) to [out=90,in=-90] (trl9d);
		\strand [thick] (trr8u) to [out=90,in=-90] (trr9d);
		\strand [thick] (trl9uw) to [out=90,in=-90] (trl10d);
		\strand [thick] (trr9uw) to [out=90,in=-90] (trr10d);
		\strand [thick] (trl10ue) to [out=90,in=-90] (trl11dw);
		\strand [thick] (trl9ue) to [out=90,in=-90] (trl11de);
		\strand [thick] (trr10ue) to [out=90,in=-90] (trr11dw);
		\strand [thick] (trr9ue) to [out=90,in=-90] (trr11de);
		\strand [thick] (trl10uw) to [out=90,in=-90] (trl12dw);
		\strand [thick] (trr10uw) to [out=90,in=-90] (trr12dw);
		\strand [thick] (trl11u) to [out=90,in=-90] (trl12de);
		\strand [thick] (trr11u) to [out=90,in=-90] (trr12de);
		\strand [thick] (trl12u) to (outw);
		\strand [thick] (trr12u) to (oute);
		\end{knot}
		\end{tikzpicture}
	}
	=
	\hbox{
		\begin{tikzpicture}[baseline=(current  bounding  box.center)]
		
		\coordinate (in) at (0,0);
		
		\node (tr1) at (0,0.6) {\strut$\bigtriangledown$};
		\coordinate (tr1d) at (0,0.5);
		\coordinate (tr1uww) at (-0.15,0.7);
		\coordinate (tr1uw) at (-0.1,0.7);
		\coordinate (tr1ue) at (0.1,0.7);

		\node (tr2) at (0.45,1.6) {\strut$\bigtriangledown$};
		\coordinate (tr2d) at (0.45,1.5);
		\coordinate (tr2uww) at (0.3,1.7);
		\coordinate (tr2uw) at (0.35,1.7);
		\coordinate (tr2ue) at (0.55,1.7);
		\coordinate (tr2uee) at (0.6,1.7);
		
		\coordinate (low) at (0.1, 2);
		
		\coordinate (midw) at (-0.15,3.5);
		\coordinate (midee) at (0.7,3.3);
		
		\coordinate (upw) at (0.1,4.5);
		\coordinate (up) at (0.35,4.5);
		
		\node (trl11) at (-0.15,5.5) {\strut$\bigtriangleup$};
		\coordinate (trl11dw) at (-0.25,5.42);
		\coordinate (trl11de) at (-0.05,5.42);
		\coordinate (trl11u) at (-0.15,5.65);
		
		\node (trr11) at (0.65,5.5) {\strut$\bigtriangleup$};
		\coordinate (trr11dw) at (0.55,5.42);
		\coordinate (trr11de) at (0.75,5.42);
		\coordinate (trr11u) at (0.65,5.65);
		
		\node (trl12) at (-0.25,6.5) {\strut$\bigtriangleup$};
		\coordinate (trl12dw) at (-0.35,6.42);
		\coordinate (trl12de) at (-0.15,6.42);
		\coordinate (trl12u) at (-0.25,6.65);
		
		\node (trr12) at (0.4,6.5) {\strut$\bigtriangleup$};
		\coordinate (trr12dw) at (0.3,6.42);
		\coordinate (trr12de) at (0.5,6.42);
		\coordinate (trr12u) at (0.4,6.65);
		
		\coordinate (outw) at (-0.25, 7) ;
		\coordinate (oute) at (0.4, 7) ;		
		
		\coordinate (west1) at (-0.05,3.4);
		\coordinate (north1) at (0.25,3.9);
		\coordinate (east1) at (0.5,3.4);
		\coordinate (south1) at (0.25,2.9);

		\begin{knot}[clip width=3,clip radius =3pt]
		\strand [thick] (in) to (tr1d);
		\strand [thick] (tr1uww) to [out=90,in=-90] (trl12dw);
		\strand [thick] (tr1uw) to [out=90,in=-90] (low);
		\strand [thick] (low) to [out=90,in=-90] (upw);
		\strand [thick] (upw) to [out=90,in=-90](trr12dw);
		\strand [thick] (tr1ue) to [out=90,in=-90] (tr2d);
		\strand [thick] (tr2uww) to [out=100,in=-90] (midw) to [out=90,in=-90] (trl11dw);
		\strand [thick] (tr2uw) to [out=90,in=-90] (up);
		\strand [thick] (up) to [out=90,in=-90] (trr11dw);
		\strand [thick] (tr2ue) to [out=80,in=-90] (midee) to [out=90,in=-90] (trl11de);
		\strand [thick] (tr2uee) to [out=80,in=-90] (trr11de);
		\strand [thick] (trl11u) to [out=90,in=-90] (trl12de);
		\strand [thick] (trr11u) to [out=90,in=-90] (trr12de);
		\strand [thick] (trl12u) to (outw);
		\strand [thick] (trr12u) to (oute);
		\strand [blue, thick] (west1) to [out=90,in=-180] (north1) to [out=0,in=90] (east1) to [out=-90,in=0] (south1) to [out=-180,in=-90] (west1);
		\flipcrossings{1,4}
		\end{knot}
		\end{tikzpicture}
	}
	=
	\hbox{
		\begin{tikzpicture}[baseline=(current  bounding  box.center)]
		
		\coordinate (in) at (0,0);
		
		\node (tr1) at (0,0.6) {\strut$\bigtriangledown$};
		\coordinate (tr1d) at (0,0.5);
		\coordinate (tr1uww) at (-0.15,0.7);
		\coordinate (tr1uw) at (-0.1,0.7);
		\coordinate (tr1ue) at (0.1,0.7);

		\node (tr2) at (0.1,1.1) {\strut$\bigtriangledown$};
		\coordinate (tr2d) at (0.1,1);
		\coordinate (tr2uww) at (-0.05,1.2);
		\coordinate (tr2uw) at (0.0,1.2);
		\coordinate (tr2ue) at (0.2,1.2);
		\coordinate (tr2uee) at (0.25,1.2);
		
		\coordinate (lc) at (-0.195,1.55);
		
		\node (trl11) at (-0.05,1.6) {\strut$\bigtriangleup$};
		\coordinate (trl11dw) at (-0.15,1.52);
		\coordinate (trl11de) at (0.05,1.52);
		\coordinate (trl11u) at (-0.05,1.75);
		
		\node (trr11) at (0.25,1.6) {\strut$\bigtriangleup$};
		\coordinate (trr11dw) at (0.15,1.52);
		\coordinate (trr11de) at (0.35,1.52);
		\coordinate (trr11u) at (0.25,1.75);
		
		\node (trl12) at (-0.15,2.5) {\strut$\bigtriangleup$};
		\coordinate (trl12dw) at (-0.25,2.42);
		\coordinate (trl12de) at (-0.05,2.42);
		\coordinate (trl12u) at (-0.15,2.65);
		
		\node (trr12) at (0.15,2.5) {\strut$\bigtriangleup$};
		\coordinate (trr12dw) at (0.05,2.42);
		\coordinate (trr12de) at (0.25,2.42);
		\coordinate (trr12u) at (0.15,2.65);
		
		\coordinate (outw) at (-0.15, 3) ;
		\coordinate (oute) at (0.15, 3) ;

		\begin{knot}[clip width=0,clip radius =3pt]
		\strand [thick] (in) to (tr1d);
		\strand [thick] (tr1uww) to [out=100,in=-90] (trl12dw);
		\strand [thick] (tr1uw) to [out=100,in=-90] (lc) to [out=90,in=-90](trr12dw);
		\strand [thick] (tr1ue) to [out=90,in=-90] (tr2d);
		\strand [thick] (tr2uww) to [out=90,in=-90] (trl11dw);
		\strand [thick] (tr2uw) to [out=90,in=-90] (trr11dw);
		\strand [thick] (tr2ue) to [out=90,in=-90] (trl11de);
		\strand [thick] (tr2uee) to [out=90,in=-90] (trr11de);
		\strand [thick] (trl11u) to [out=90,in=-90] (trl12de);
		\strand [thick] (trr11u) to [out=90,in=-90] (trr12de);
		\strand [thick] (trl12u) to (outw);
		\strand [thick] (trr12u) to (oute);
		\end{knot}
		\end{tikzpicture}
	}.
	$$
	In the first step, we used the relation from Equation \eqref{DCinclprojprops} to replace projection-inclusion pairs by rings, and subsequently used slicing (Lemma \ref{DCslicing}) to bring these rings to a position where we could use:
	$$
	\hbox{
		\begin{tikzpicture}[baseline=(current  bounding  box.center)]

		\node (truw) at (-0.3,0){};
		\node (true) at (0.3,0){};
		
		\coordinate (lc) at (-0.3,1.8);
		\coordinate (rc) at (0.3,1.8);
		
		\node (tr1) at (0,2.2) {\strut$\bigtriangleup$};
		\coordinate (trd1w) at (-0.1,2.12);
		\coordinate (trd1e) at (0.1,2.12);
		\coordinate (tru1) at (0,2.35);
		\node (c1) at (0,3) {};
		
		\coordinate (west1) at (-0.7,1.0);
		\coordinate (north1) at (0.0,1.3);
		\coordinate (east1) at (0.7,1.0);
		\coordinate (south1) at (0.0,0.7);

		\begin{knot}[clip width=4]
		\strand [thick] (true) to [out=90, in=-70] (trd1e);
		\strand [thick] (tru1) to (c1);
		\strand [thick] (truw) to [out=90,in=-110] (trd1w);
		\strand [blue, thick] (west1) to [out=90,in=-180] (north1) to [out=0,in=90] (east1) to [out=-90,in=0] (south1) to [out=-180,in=-90] (west1);
		\flipcrossings{1,4}
		\end{knot}
		\end{tikzpicture}
	}
	=
	\hbox{
		\begin{tikzpicture}[baseline=(current  bounding  box.center)]

		\node (truw) at (-0.3,0){};
		\node (true) at (0.3,0){};
		
		\coordinate (lc) at (-0.3,1.8);
		\coordinate (rc) at (0.3,1.8);
		
		\node (tr1) at (0,2.2) {\strut$\bigtriangleup$};
		\coordinate (trd1w) at (-0.1,2.12);
		\coordinate (trd1e) at (0.1,2.12);
		\coordinate (tru1) at (0,2.35);
		\node (c1) at (0,3) {};

		\begin{knot}[clip width=4]
		\strand [thick] (true) to [out=90, in=-70] (trd1e);
		\strand [thick] (tru1) to (c1);
		\strand [thick] (truw) to [out=90,in=-110] (trd1w);
		\end{knot}
		\end{tikzpicture}
	},
	$$
	which is an easy consequence of Equation \ref{DCinclprojprops}. This left the ring in the middle of the second diagram. To rid ourselves of this, we used the relation between the braiding in $\dcentcat{A}$ and the symmetry in $\cat{A}$ from Equation \eqref{DCcrossinginteraction} and slicing to cancel the ring with the top right projection. The third diagram is just the composite on the right hand side of the coherence diagram \ref{DClax2fold}\eqref{DCtensor1}. 
	Reading the diagrams in this proof top to bottom yields a proof of the commutativity of the diagram from Definition \ref{DClax2fold}\eqref{DCtensor2}.
\end{proof}

\begin{lem}
	The natural transformation $\eta$ makes the diagrams from Definitions \ref{DClax2fold}\eqref{DCtensor2} commute for $\otimes_1=\otimes_c$ and $\otimes_2=\otimes_s$. Analogously, the morphisms $\zeta$ make the diagrams from Definitions \ref{DClax2fold}\eqref{DCtensor1} commute for $\otimes_1=\otimes_s$ and $\otimes_2=\otimes_c$.	
\end{lem}

\begin{proof}
	As we are suppressing the associators in the string diagrams, we see that we have, in terms of string diagrams for the left side composite in the diagram in Definition \ref{DClax2fold}\eqref{DCtensor2}:
	$$
	\hbox{
		\begin{tikzpicture}[baseline=(current  bounding  box.center)]
		
		\coordinate (inw) at (0,0);
		\coordinate (inm) at (0.5,0);
		\coordinate (ine) at (1,0);

		\node (trw1) at (0,0.6) {\strut$\bigtriangledown$};
		\coordinate (trw1d) at (0,0.5);
		\coordinate (trw1uw) at (-0.1,0.7);
		\coordinate (trw1ue) at (0.1,0.7);
		
		\node (trm1) at (0.5,0.6) {\strut$\bigtriangledown$};
		\coordinate (trm1d) at (0.5,0.5);
		\coordinate (trm1uw) at (0.4,0.7);
		\coordinate (trm1ue) at (0.6,0.7);
		
		\node (tre1) at (1,0.6) {\strut$\bigtriangledown$};
		\coordinate (tre1d) at (1,0.5);
		\coordinate (tre1uw) at (0.9,0.7);
		\coordinate (tre1ue) at (1.1,0.7);

		\node (tr2) at (0.25,1.6) {\strut$\bigtriangleup$};
		\coordinate (tr2dw) at (0.15,1.52);
		\coordinate (tr2de) at (0.35,1.52);
		\coordinate (tr2dww) at (0.1,1.52);
		\coordinate (tr2dee) at (0.4,1.52);
		\coordinate (tr2u) at (0.25,1.75);

		\node (tr3) at (0.25,2) {\strut$\bigtriangledown$};
		\coordinate (tr3d) at (0.25,1.9);
		\coordinate (tr3uw) at (0.15,2.1);
		\coordinate (tr3ue) at (0.35,2.1);
		\coordinate (tr3uww) at (0.1,2.1);
		\coordinate (tr3uee) at (0.4,2.1);

		\node (tr4) at (0.5,3) {\strut$\bigtriangleup$};
		\coordinate (tr4dw) at (0.45,2.92);
		\coordinate (tr4de) at (0.55,2.92);
		\coordinate (tr4dww) at (0.4,2.92);
		\coordinate (tr4dee) at (0.6,2.92);
		\coordinate (tr4dwww) at (0.35,2.92);
		\coordinate (tr4deee) at (0.65,2.92);
		\coordinate (tr4u) at (0.5,3.15);
		
		\coordinate (out) at (0.5, 3.5) ;

		\begin{knot}[clip width=0,clip radius =3pt]
		\strand [thick] (inw) to (trw1d);
		\strand [thick] (inm) to (trm1d);
		\strand [thick] (ine) to (tre1d);
		\strand [thick] (trw1uw) to [out=90,in=-90] (tr2dww);
		\strand [thick] (trm1uw) to [out=90,in=-90](tr2dw);
		\strand [thick] (trw1ue) to [out=90,in=-90] (tr2de);
		\strand [thick] (trm1ue) to [out=90,in=-90] (tr2dee);
		\strand [thick] (tr2u) to [out=90,in=-90] (tr3d);
		\strand [thick] (tre1ue) to [out=90,in=-90] (tr4deee);
		\strand [thick] (tre1uw) to [out=90,in=-90] (tr4dw);
		\strand [thick] (tr3uww) to [out=90,in=-90] (tr4dwww);
		\strand [thick] (tr3uw) to [out=90,in=-90] (tr4dww);
		\strand [thick] (tr3ue) to [out=90,in=-90] (tr4de);
		\strand [thick] (tr3uee) to [out=90,in=-90] (tr4dee);
		\strand [thick] (tr4u) to (out);
		\end{knot}
		\end{tikzpicture}
	}
	=
	\hbox{
		\begin{tikzpicture}[baseline=(current  bounding  box.center)]
		
		\coordinate (inw) at (0,0);
		\coordinate (inm) at (0.5,0);
		\coordinate (ine) at (1,0);

		\node (trw1) at (0,0.6) {\strut$\bigtriangledown$};
		\coordinate (trw1d) at (0,0.5);
		\coordinate (trw1uw) at (-0.1,0.7);
		\coordinate (trw1ue) at (0.1,0.7);
		
		\node (trm1) at (0.5,0.6) {\strut$\bigtriangledown$};
		\coordinate (trm1d) at (0.5,0.5);
		\coordinate (trm1uw) at (0.4,0.7);
		\coordinate (trm1ue) at (0.6,0.7);
		
		\node (tre1) at (1,0.6) {\strut$\bigtriangledown$};
		\coordinate (tre1d) at (1,0.5);
		\coordinate (tre1uw) at (0.9,0.7);
		\coordinate (tre1ue) at (1.1,0.7);

		\coordinate (tr2dw) at (0.15,1.22);
		\coordinate (tr2de) at (0.35,1.22);
		\coordinate (tr2dww) at (0.1,1.22);
		\coordinate (tr2dee) at (0.4,1.22);

		\coordinate (tr3uw) at (0.15,2.1);
		\coordinate (tr3ue) at (0.35,2.1);
		\coordinate (tr3uww) at (0.1,2.1);
		\coordinate (tr3uee) at (0.4,2.1);

		\node (tr4) at (0.5,3) {\strut$\bigtriangleup$};
		\coordinate (tr4dw) at (0.45,2.92);
		\coordinate (tr4de) at (0.55,2.92);
		\coordinate (tr4dww) at (0.4,2.92);
		\coordinate (tr4dee) at (0.6,2.92);
		\coordinate (tr4dwww) at (0.35,2.92);
		\coordinate (tr4deee) at (0.65,2.92);
		\coordinate (tr4u) at (0.5,3.15);
		
		\coordinate (out) at (0.5, 3.5) ;
		
		\coordinate (west1) at (-0.15,1.7);
		\coordinate (north1) at (0.3,2);
		\coordinate (east1) at (0.6,1.7);
		\coordinate (south1) at (0.3,1.4);

		\begin{knot}[clip width=3,clip radius =3pt]
		\strand [thick] (inw) to (trw1d);
		\strand [thick] (inm) to (trm1d);
		\strand [thick] (ine) to (tre1d);
		\strand [thick] (trw1uw) to [out=90,in=-90] (tr2dww) to (tr3uww)to [out=90,in=-90] (tr4dwww);
		\strand [thick] (trm1uw) to [out=90,in=-90](tr2dw) to (tr3uw) to [out=90,in=-90] (tr4dww);
		\strand [thick] (trw1ue) to [out=90,in=-90] (tr2de) to (tr3ue) to [out=90,in=-90] (tr4de);
		\strand [thick] (trm1ue) to [out=90,in=-90] (tr2dee) to (tr3uee) to [out=90,in=-90] (tr4dee);
		\strand [thick] (tre1ue) to [out=90,in=-90] (tr4deee);
		\strand [thick] (tre1uw) to [out=90,in=-90] (tr4dw);
		\strand [thick] (tr4u) to (out);
		\strand [blue, thick] (west1) to [out=90,in=-180] (north1) to [out=0,in=90] (east1) to [out=-90,in=0] (south1) to [out=-180,in=-90] (west1);
		\flipcrossings{1,3,6,8}
		\end{knot}
		\end{tikzpicture}
	}
	=
	\hbox{
		\begin{tikzpicture}[baseline=(current  bounding  box.center)]
		
		\coordinate (inw) at (0,0);
		\coordinate (inm) at (0.5,0);
		\coordinate (ine) at (1,0);

		\node (trw1) at (0,0.6) {\strut$\bigtriangledown$};
		\coordinate (trw1d) at (0,0.5);
		\coordinate (trw1uw) at (-0.1,0.7);
		\coordinate (trw1ue) at (0.1,0.7);
		
		\node (trm1) at (0.5,0.6) {\strut$\bigtriangledown$};
		\coordinate (trm1d) at (0.5,0.5);
		\coordinate (trm1uw) at (0.4,0.7);
		\coordinate (trm1ue) at (0.6,0.7);
		
		\node (tre1) at (1,0.6) {\strut$\bigtriangledown$};
		\coordinate (tre1d) at (1,0.5);
		\coordinate (tre1uw) at (0.9,0.7);
		\coordinate (tre1ue) at (1.1,0.7);

		\coordinate (tr2dw) at (0.15,2.1);
		\coordinate (tr2de) at (0.35,2.1);
		\coordinate (tr2dww) at (0.1,2.1);
		\coordinate (tr2dee) at (0.4,2.1);

		\coordinate (tr3uw) at (0.15,2.3);
		\coordinate (tr3ue) at (0.35,2.3);
		\coordinate (tr3uww) at (0.1,2.3);
		\coordinate (tr3uee) at (0.4,2.3);

		\node (tr4) at (0.5,3) {\strut$\bigtriangleup$};
		\coordinate (tr4dw) at (0.45,2.92);
		\coordinate (tr4de) at (0.55,2.92);
		\coordinate (tr4dww) at (0.4,2.92);
		\coordinate (tr4dee) at (0.6,2.92);
		\coordinate (tr4dwww) at (0.35,2.92);
		\coordinate (tr4deee) at (0.65,2.92);
		\coordinate (tr4u) at (0.5,3.15);
		
		\coordinate (out) at (0.5, 3.5) ;
		
		\coordinate (west1) at (-0.4,1.4);
		\coordinate (north1) at (0.2,1.6);
		\coordinate (east1) at (0.7,1.4);
		\coordinate (south1) at (0.2,1.2);

		\begin{knot}[clip width=3,clip radius =3pt]
		\strand [thick] (inw) to (trw1d);
		\strand [thick] (inm) to (trm1d);
		\strand [thick] (ine) to (tre1d);
		\strand [thick] (trw1uw) to [out=90,in=-90] (tr2dww) to (tr3uww)to [out=90,in=-90] (tr4dwww);
		\strand [thick] (trm1uw) to [out=90,in=-90](tr2de);
		\strand [thick] (tr2de) to (tr3uw) to [out=90,in=-90] (tr4dww);
		\strand [thick] (trw1ue) to [out=90,in=-90] (tr2dw) to (tr3ue) to [out=90,in=-90] (tr4de);
		\strand [thick] (trm1ue) to [out=90,in=-90] (tr2dee) to (tr3uee) to [out=90,in=-90] (tr4dee);
		\strand [thick] (tre1ue) to [out=90,in=-90] (tr4deee);
		\strand [thick] (tre1uw) to [out=90,in=-90] (tr4dw);
		\strand [thick] (tr4u) to (out);
		\strand [blue, thick] (west1) to [out=90,in=-180] (north1) to [out=0,in=90] (east1) to [out=-90,in=0] (south1) to [out=-180,in=-90] (west1);
		\flipcrossings{1,3,8,6}
		\end{knot}
		\end{tikzpicture}
	}
	=
	\hbox{
		\begin{tikzpicture}[baseline=(current  bounding  box.center)]
		
		\coordinate (inw) at (0,0);
		\coordinate (inm) at (0.5,0);
		\coordinate (ine) at (1,0);

		\node (trw1) at (0,0.6) {\strut$\bigtriangledown$};
		\coordinate (trw1d) at (0,0.5);
		\coordinate (trw1uw) at (-0.1,0.7);
		\coordinate (trw1ue) at (0.1,0.7);
		
		\node (trm1) at (0.5,0.6) {\strut$\bigtriangledown$};
		\coordinate (trm1d) at (0.5,0.5);
		\coordinate (trm1uw) at (0.4,0.7);
		\coordinate (trm1ue) at (0.6,0.7);
		
		\node (tre1) at (1,0.6) {\strut$\bigtriangledown$};
		\coordinate (tre1d) at (1,0.5);
		\coordinate (tre1uw) at (0.9,0.7);
		\coordinate (tre1ue) at (1.1,0.7);

		\coordinate (tr2dw) at (0.15,2.1);
		\coordinate (tr2de) at (0.35,2.1);
		\coordinate (tr2dww) at (0.1,2.1);
		\coordinate (tr2dee) at (0.4,2.1);

		\coordinate (tr3uw) at (0.15,2.3);
		\coordinate (tr3ue) at (0.35,2.3);
		\coordinate (tr3uww) at (0.1,2.3);
		\coordinate (tr3uee) at (0.4,2.3);

		\node (tr4) at (0.5,3) {\strut$\bigtriangleup$};
		\coordinate (tr4dw) at (0.45,2.92);
		\coordinate (tr4de) at (0.55,2.92);
		\coordinate (tr4dww) at (0.4,2.92);
		\coordinate (tr4dee) at (0.6,2.92);
		\coordinate (tr4dwww) at (0.35,2.92);
		\coordinate (tr4deee) at (0.65,2.92);
		\coordinate (tr4u) at (0.5,3.15);
		
		\coordinate (out) at (0.5, 3.5) ;
		
		\coordinate (west1) at (-0.4,1.4);
		\coordinate (north1) at (0.2,1.6);
		\coordinate (east1) at (0.7,1.4);
		\coordinate (south1) at (0.2,1.2);

		\begin{knot}[clip width=0,clip radius =3pt]
		\strand [thick] (inw) to (trw1d);
		\strand [thick] (inm) to (trm1d);
		\strand [thick] (ine) to (tre1d);
		\strand [thick] (trw1uw) to [out=90,in=-90] (tr4dwww);
		\strand [thick] (trm1uw) to [out=90,in=-90] (tr4dww);
		\strand [thick] (trw1ue) to [out=90,in=-90] (tr4de);
		\strand [thick] (trm1ue) to [out=90,in=-90] (tr4dee);
		\strand [thick] (tre1ue) to [out=90,in=-90] (tr4deee);
		\strand [thick] (tre1uw) to [out=90,in=-90] (tr4dw);
		\strand [thick] (tr4u) to (out);
		\end{knot}
		\end{tikzpicture}
	}.
	$$
	Here the first equality is replacing the inclusion-projection pair by the idempotent (Equation \eqref{DCinclprojprops}), the second uses the interaction between the braiding of $\dcentcat{A}$ and the symmetry of $\cat{A}$ (Equation \eqref{DCcrossinginteraction}). For the final step, we use slicing (Lemma \ref{DCslicing}) for the two leftmost inclusions on the bottom, this frees the ring so that it subsequently evaluates to 1.
	
	Similar arguments also reduce the right side of this coherence diagram to the rightmost string diagram.
	
	For the case involving $\zeta$, we read the diagrams top to bottom.
\end{proof}

This finishes proving that $\dcentcat{A}$ can be viewed as a bilax 2-fold monoidal category in as advertised in Theorem \ref{DCdc2foldmon}. 

\subsubsection{Braiding Coherence}
To prove Theorem \ref{DCdc2foldmon}, we still need to prove that the compatibility morphisms are compatible with the braiding.

\begin{lem}
	The morphism $v_2$ makes the diagram from Definition \ref{DCbraideddef}\eqref{DCu2braid} commute, where $\otimes_1=\otimes_s$ and $\otimes_2=\otimes_c$. Analogously, the morphism $u_1$ makes the corresponding diagram from Definition \ref{DCbraideddef}\eqref{DCu2braid} for $\beta_2$ commute, where $\otimes_1=\otimes_c$ and $\otimes_2=\otimes_s$.
\end{lem}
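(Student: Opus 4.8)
The plan is to reduce both diagrams to the single observation that the $\otimes_s$-symmetry on $\mathbb{I}_c\otimes_s\mathbb{I}_c$ is the identity. First I would unwind Definition \ref{braideddef}\eqref{u2braid} in each of the two specialisations. For $\otimes_1=\otimes_s$, $\otimes_2=\otimes_c$, the morphism occupying the slot of $u_2\colon\mathbb{I}_2\rar\mathbb{I}_2\otimes_1\mathbb{I}_2$ is exactly $v_2\colon\mathbb{I}_c\rar\mathbb{I}_c\otimes_s\mathbb{I}_c$, and $\beta_1$ is the symmetry $\beta_s$; the diagram then says $\beta_s\circ v_2=v_2$. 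For $\otimes_1=\otimes_c$, $\otimes_2=\otimes_s$, the analogous diagram for the vertical braiding $\beta_2=\beta_s$ uses $u_1\colon\mathbb{I}_c\otimes_s\mathbb{I}_c\rar\mathbb{I}_c$ and says $u_1\circ\beta_s=u_1$. In both cases the statement follows at once from $\beta_s=\id_{\mathbb{I}_c\otimes_s\mathbb{I}_c}$.

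To prove that identity, I would use that $\mathbb{I}_c$ is the tensor unit of $\cat{A}$ sitting inside $\cat{A}\subset\dcentcat{A}$, and that on objects of $\cat{A}$ the idempotent $\Pi$ of Equation \eqref{idempotent} is trivial, so that $\otimes_s$ and $\otimes_c$ agree there. This is precisely the canonical isomorphism $\mathbb{I}_c\otimes_s\mathbb{I}_c\cong\mathbb{I}_c$ used to define $u_1$ and $v_2$. Under this identification the projection $\bigtriangledown$ and inclusion $\bigtriangleup$ for $\Pi_{\mathbb{I}_c,\mathbb{I}_c}$ are identities, so the defining formula \eqref{symmorph} for $\beta_s$ collapses to the symmetry $s_{\mathbb{I}_c,\mathbb{I}_c}$ of $\cat{A}$ between the unit and itself.

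I would finish by invoking the standard coherence fact that in any braided monoidal category the braiding of the unit with itself coincides with the identity, so $s_{\mathbb{I}_c,\mathbb{I}_c}=\id$ and hence $\beta_s=\id_{\mathbb{I}_c\otimes_s\mathbb{I}_c}$. Both triangles then commute. The only thing requiring care is matching the abstract morphisms $u_2,u_1,\beta_1,\beta_2$ of Definition \ref{braideddef}\eqref{u2braid} to the concrete $v_2$, $u_1$ and $\beta_s$ under the two different index conventions; once that bookkeeping is done the computation is immediate, so I do not expect a genuine obstacle.
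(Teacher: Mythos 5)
Your proof is correct and takes essentially the same route as the paper, which disposes of this lemma in one line by noting that the maps involved are canonical isomorphisms coming from the unitors, so the diagram commutes automatically. Your argument is simply the detailed justification of that claim: since $\Pi$ is trivial on $\cat{A}\subset\dcentcat{A}$, the symmetry $\beta_s$ at $(\mathbb{I}_c,\mathbb{I}_c)$ collapses to $s_{\mathbb{I},\mathbb{I}}=\id$, so both triangles $\beta_s\circ v_2=v_2$ and $u_1\circ\beta_s=u_1$ commute.
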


\begin{proof}
	As the maps involved are canonical isomorphisms coming from the unitors, the diagram \ref{DCbraideddef}\eqref{DCu2braid} is automatically commutative.
\end{proof}

\begin{lem}
	The morphism $u_2$ makes the diagram from Definition \ref{DCbraideddef}\eqref{DCu2braid} commute, where $\otimes_1=\otimes_c$ and $\otimes_2=\otimes_s$. Analogously, the morphism $v_1$ makes the corresponding diagram from Definition \ref{DCbraideddef}\eqref{DCu2braid} for $\beta_2$ commute, where $\otimes_1=\otimes_s$ and $\otimes_2=\otimes_c$.
\end{lem}

\begin{proof}
	We need to show that:
	$$
	\hbox{
		\begin{tikzpicture}[baseline=(current  bounding  box.center)]

		\coordinate (inw) at (-0.05,0);
		\coordinate (ine) at (0.05,0);
		
		\coordinate (outlw) at (-0.35,1);
		\coordinate (outle) at (-0.25,1);
		
		\coordinate (outrw) at (0.25,1);
		\coordinate (outre) at (0.35,1);	
		
		\coordinate (mid) at (0,0.6);	
		
		\coordinate (ulw) at (-0.35,2);
		\coordinate (ule) at (-0.25,2);
		
		\coordinate (urw) at (0.25,2);
		\coordinate (ure) at (0.35,2);

		\begin{knot}[clip width=4,clip radius =3pt]
		\strand [thick,blue] (inw) to [out=90,in=-90] (outlw) to (ulw);
		\strand [thick,blue] (ine) to [out=90,in=-90] (outre) to (ure);
		\strand [thick,blue] (ule) to (outle) to [out=-90,in=180] (mid);
		\strand[thick,blue] (mid) to [out=0,in=-90] (outrw) to (urw);
		\end{knot}
		\end{tikzpicture}
	}
	=
	\hbox{
		\begin{tikzpicture}[baseline=(current  bounding  box.center)]
		
		\coordinate (inw) at (-0.05,0);
		\coordinate (ine) at (0.05,0);
		
		\coordinate (outlw) at (-0.35,1);
		\coordinate (outle) at (-0.25,1);
		
		\coordinate (outrw) at (0.25,1);
		\coordinate (outre) at (0.35,1);	
		
		\coordinate (mid) at (0,0.6);	
		
		\coordinate (ulw) at (-0.35,2);
		\coordinate (ule) at (-0.25,2);
		
		\coordinate (urw) at (0.25,2);
		\coordinate (ure) at (0.35,2);

		\begin{knot}[clip width=3,clip radius =3pt]
		\strand [thick,blue] (inw) to [out=90,in=-90] (outlw) to [out=90,in=-90] (urw);
		\strand [thick,blue] (ine) to [out=90,in=-90] (outre) to [out=90,in=-90] (ule);
		\strand [thick,blue] (ure) to [out=-90,in=90] (outle) to [out=-90,in=180] (mid);
		\strand[thick,blue] (mid) to [out=0,in=-90] (outrw) to [out=90,in=-90] (ulw);
		\flipcrossings{1,2,4}
		\end{knot}
		\end{tikzpicture}
	}.
	$$
	Using the definition of the half-braiding on $\mathbb{I}_s$ from Equation \eqref{DCunithalfbraid}, we see that the right hand side equals:
	\begin{equation}\label{DCsummandinproof}
	\sum\limits_{i\in\cat{O}(\cat{A})}\sum\limits_{\phi\in B(ii^*i,i)}
	\hbox{
		\begin{tikzpicture}[baseline=(current  bounding  box.center)]
		
		\node (inw) at (-0.1,-0.5){\strut$i$};
		\node (ine) at (0.1,-0.5){\strut$i^*$};
		
		\node (phi) at (-0.3,1.5)[draw]{\strut$\phi$};
		\node (phid) at (0.3,1.5)[draw]{\strut$\phi^*$};
		
		\coordinate (mid) at (0,0.6);	
		
		\node (ulw) at (-0.5,3){\strut$i$};
		\node (ule) at (-0.3,3){\strut$i^*$};
		
		\node (urw) at (0.3,3){\strut$i$};
		\node (ure) at (0.5,3){\strut$i^*$};	
		
		\node (jl) at (0.25,0.9){\strut$i$};
		
		\begin{knot}[clip width=3,clip radius =3pt]
		\strand [thick] (inw) to [out=90,in=-130] (phi);
		\strand [thick] (ine) to [out=90,in=-50] (phid);
		\strand [thick] (urw) to [out=-90,in=70] (phid); 
		\strand [thick] (phi) to [out=-110,in=180] (mid);
		\strand [thick] (mid) to [out=0,in=-80] (phi);
		\strand [thick] (phi) to [out=90,in=-90] (ulw);
		\strand [thick] (phid) to [out=90,in=-90] (ule);
		\strand [thick] (phid) to [out=60, in=-90] (ure);
		\end{knot}
		\end{tikzpicture}
	},
	\end{equation}
	where we have already taken into account restrictions on the possible combinations of labelling of the strands that can occur: the two leftmost strands coming into $\phi$ need to agree with the rightmost two coming out of $\phi^*$, and these in turn must be labelled by a pair of dual objects. Further, as the leftmost strand going from bottom to top is a morphism between simple objects, its incoming and outgoing labels must be the same. 
	
	Examining $\phi\in\Hom(ii^*i,i)\cong \Hom(ii^*,ii^*)$, we see we can write it as:
	$$
	\hbox{
		\begin{tikzpicture}[baseline=(current  bounding  box.center)]
		
		\node (inw) at (-0.1,-1){\strut$i$};
		\node (inm) at (0.1,-1){\strut$i^*$};
		\node (ine) at (0.3,-1){\strut$i$};		
		
		\node (phi) at (0.1,0.5)[draw]{\strut$\phi$};

		\node (u) at (0.1,2.5){\strut$i$};
		
		\begin{knot}[clip width=3,clip radius =3pt]
		\strand [thick] (inw) to [out=90,in=-120] (phi);
		\strand [thick] (inm) to [out=90,in=-90] (phi);
		\strand [thick] (ine) to [out=90,in=-60] (phi);
		\strand [thick] (phi) to [out=90, in=-90] (u);
		\end{knot}
		\end{tikzpicture}
	}
	=
	\hbox{
		\begin{tikzpicture}[baseline=(current  bounding  box.center)]
		
		\node (inw) at (-0.1,-1){\strut$i$};
		\node (inm) at (0.1,-1){\strut$i^*$};
		\node (ine) at (0.3,-1){\strut$i$};		
		
		\node (psi) at (0.1,0.5)[draw]{\strut$\psi$};
		\node (psit) at (0.1,1.5)[draw]{\strut$\psi^t$};
		
		\coordinate (c) at (0.6,1.7);
		
		\node (u) at (0.1,2.5){\strut$i$};
		
		\node (l) at (0.2, 1){\strut$l$};
		
		\begin{knot}[clip width=3,clip radius =3pt]
		\strand [thick] (inw) to [out=90,in=-100] (psi);
		\strand [thick] (inm) to [out=90,in=-80] (psi);
		\strand [thick] (ine) to [out=90,in=-90] (c) to [out=90,in=80] (psit);
		\strand [thick] (psi) to (psit);
		\strand [thick] (psit) to [out=100, in=-90] (u);
		\end{knot}
		\end{tikzpicture}
	},
	$$
	for some $l\in\SA$ and $\psi\in\Hom(ii^*,l)$. Therefore, picking a basis for $\Hom(ii^*,l)$ for each $l\in\SA$ gives a basis for $\Hom(ii^*,ii^*)\cong \Hom(ii^*i,i)$. Rescaling if necessary we can arrange
	$$
	\hbox{
		\begin{tikzpicture}[baseline=(current  bounding  box.center)]
		
		\node (inw) at (-0.1,-1){\strut$i$};
		
		\node (phi) at (0.1,0.5)[draw]{\strut$\phi$};
		
		\coordinate (lc) at (0.2,-0.5);
		
		\node (u) at (0.1,2.5){\strut$i$};
		
		\begin{knot}[clip width=3,clip radius =3pt]
		\strand [thick] (inw) to [out=90,in=-120] (phi);
		\strand [thick] (phi) to [out=-60,in=0] (lc) to [out=180,in=-90] (phi);
		\strand [thick] (phi) to [out=90, in=-90] (u);
		\end{knot}
		\end{tikzpicture}
	}
	=
	\hbox{
		\begin{tikzpicture}[baseline=(current  bounding  box.center)]
		
		\node (inw) at (-0.1,-1){\strut$i$};
		
		\node (psi) at (0.1,0.5)[draw]{\strut$\psi$};
		\node (psit) at (0.1,1.5)[draw]{\strut$\psi^t$};
		
		\coordinate (c) at (0.6,1.7);
		
		\coordinate (lc) at (0.6,0.2);
		
		\node (u) at (0.1,2.5){\strut$i$};
		
		\node (l) at (0.2, 1){\strut$l$};
		
		\begin{knot}[clip width=3,clip radius =3pt]
		\strand [thick] (inw) to [out=90,in=-100] (psi);
		\strand [thick] (psi) to [out=-60,in=-90] (lc) to [out=90,in=-90] (c) to [out=90,in=80] (psit);
		\strand [thick] (psi) to (psit);
		\strand [thick] (psit) to [out=100, in=-90] (u);
		\end{knot}
		\end{tikzpicture}
	}
	=
	\hbox{
		\begin{tikzpicture}[baseline=(current  bounding  box.center)]
		
		\node (inw) at (0.1,-1){\strut$i$};
		
		\node (u) at (0.1,2.5){\strut$i$};

		\begin{knot}[clip width=3,clip radius =3pt]
		\strand [thick] (inw) to [out=90,in=-90] (u);
		\end{knot}
		\end{tikzpicture}
	}.
	$$
	We now claim that the transposes for these $\phi$ are given by:
	$$
	\hbox{
		\begin{tikzpicture}[baseline=(current  bounding  box.center)]
		
		\node (in) at (0.1,-1){\strut$i$};
		
		\node (phi) at (0.1,0.5)[draw]{\strut$\phi^t$};

		\node (ow) at (-0.1,2){\strut$i$};
		\node (om) at (0.1,2){\strut$i^*$};
		\node (oe) at (0.3,2){\strut$i$};

		\begin{knot}[clip width=3,clip radius =3pt]
		\strand [thick] (ow) to [out=-90,in=120] (phi);
		\strand [thick] (om) to [out=-90,in=90] (phi);
		\strand [thick] (oe) to [out=-90,in=60] (phi);
		\strand [thick] (phi) to [out=-90, in=90] (in);
		\end{knot}
		\end{tikzpicture}
	}
	=
	\hbox{
		\begin{tikzpicture}[baseline=(current  bounding  box.center)]
		
		\node (in) at (-0.1,-1){\strut$i$};
		
		\node (psi) at (0.1,0)[draw]{\strut$\psi$};
		\node (psit) at (0.1,1)[draw]{\strut$\psi^t$};
		
		\coordinate (c) at (0.5,-0.2);
		\coordinate (lc) at (0.5,1.1);
		
		\node (ow) at (-0.1,2){\strut$i$};
		\node (om) at (0.1,2){\strut$i^*$};
		\node (oe) at (0.3,2){\strut$i$};

		\node (l) at (0.2, 0.5){\strut$l$};
		
		\begin{knot}[clip width=3,clip radius =3pt]
		\strand [thick] (ow) to [out=-90,in=100] (psit);
		\strand [thick] (om) to [out=-90,in=80] (psit);
		\strand [thick] (oe) to [out=-90,in=90] (lc) to [out=-90,in=90] (c) to [out=-90,in=-80] (psi);
		\strand [thick] (psi) to (psit);
		\strand [thick] (psi) to [out=-100, in=90] (in);
		\end{knot}
		\end{tikzpicture}
	}.
	$$
	
	To see this, we compute the composite:

	$$
	\hbox{
		\begin{tikzpicture}[baseline=(current  bounding  box.center)]
		
		\coordinate (in) at (0.1,-2);
		
		\node (phit) at (0.1,-1)[draw]{\strut$\phi^t$};
		\node (phi) at (0.1,2)[draw]{\strut$\phi'$};

		\coordinate (o) at (0.1,3);

		\begin{knot}[clip width=3,clip radius =3pt]
		\strand [thick] (o) to [out=-90,in=90] (phi);
		\strand [thick] (phit) to [out=90,in=-90] (phi);
		\strand [thick] (phit) to [out=100,in=-100] (phi);
		\strand [thick] (phit) to [out=80,in=-80] (phi);
		\strand [thick] (phit) to [out=-90, in=90] (in);
		\end{knot}
		\end{tikzpicture}
	}
	=
	\hbox{
		\begin{tikzpicture}[baseline=(current  bounding  box.center)]
		
		\coordinate (in) at (-0.1,-2);
		
		\node (psi) at (0.1,-1)[draw]{\strut$\psi$};
		\node (psit) at (0.1,0)[draw]{\strut$\psi^t$};
		
		\node (psip) at (0.1,1)[draw]{\strut$\psi'$};
		\node (psitp) at (0.1,2)[draw]{\strut$(\psi')^t$};

		\coordinate (c) at (0.7,-1.2);
		\coordinate (lc) at (0.7,2.2);
		
		\coordinate (o) at (-0.1,3);

		\coordinate (l) at (0.2, 0.5);
		
		\begin{knot}[clip width=3,clip radius =3pt]
		\strand [thick] (psip) to [out=-100,in=100] (psit);
		\strand [thick] (psip) to [out=-80,in=80] (psit);
		\strand [thick] (psip) to [out=90,in=-90] (psitp);
		\strand [thick] (psitp) to [out=70,in=90] (lc) to [out=-90,in=90] (c) to [out=-90,in=-70] (psi);
		\strand [thick] (psi) to (psit);
		\strand [thick] (psi) to [out=-100, in=90] (in);
		\strand [thick] (psitp) to [out=100,in=-90] (o);
		\end{knot}
		\end{tikzpicture}
	}
	=\delta_{\psi,\psi'}
	\hbox{
		\begin{tikzpicture}[baseline=(current  bounding  box.center)]
		
		\coordinate (in) at (-0.1,-2);
		
		\node (psi) at (0.1,-1)[draw]{\strut$\psi$};
		\node (psitp) at (0.1,2)[draw]{\strut$(\psi')^t$};

		\coordinate (c) at (0.7,-1.2);
		\coordinate (lc) at (0.7,2.2);
		
		\coordinate (o) at (-0.1,3);

		\coordinate (l) at (0.2, 0.5);
		
		\begin{knot}[clip width=3,clip radius =3pt]
		\strand [thick] (psi) to [out=90,in=-90] (psitp);
		\strand [thick] (psitp) to [out=70,in=90] (lc) to [out=-90,in=90] (c) to [out=-90,in=-70] (psi);
		\strand [thick] (psi) to [out=-100, in=90] (in);
		\strand [thick] (psitp) to [out=100,in=-90] (o);
		\end{knot}
		\end{tikzpicture}
	}
	=\delta_{\psi,\psi'}
	\hbox{
		\begin{tikzpicture}[baseline=(current  bounding  box.center)]
		
		\coordinate (in) at (-0.1,-2);

		\coordinate (o) at (-0.1,3);

		\draw [thick] (in) to [out=90,in=-90] (o);
		
		\end{tikzpicture}
	}.
	$$
	Putting this together, we see that the sum in Equation \eqref{DCsummandinproof} becomes:
	$$
	\sum\limits_{i\in\cat{O}(\cat{A})}\sum\limits_{l\in\cat{O}(\cat{A})}\sum\limits_{\psi\in B(i^*i,l)}
	\hbox{
		\begin{tikzpicture}[baseline=(current  bounding  box.center)]
		
		\node (inw) at (-0.1,-1.5){\strut$i$};
		\node (ine) at (0.1,-1.5){\strut$i^*$};

		\node (psi) at (0.5,1)[draw]{\strut$\psi$};
		\node (psit) at (0.5,2)[draw]{\strut$\psi^t$};
		
		\node (ulw) at (-0.5,3){\strut$i$};
		\node (ule) at (-0.3,3){\strut$i^*$};
		
		\node (urw) at (0.3,3){\strut$i$};
		\node (ure) at (0.5,3){\strut$i^*$};	
		
		\node (jl) at (0.6,1.5){\strut$l$};
		
		\begin{knot}[clip width=3,clip radius =3pt]
		\strand [thick] (inw) to [out=90,in=-90] (ulw);
		\strand [thick] (ine) to [out=90,in=-50] (psi);
		\strand [thick] (urw) to [out=-90,in=100] (psit); 
		\strand [thick] (psi) to [out=-100,in=-90] (ule);
		\strand [thick] (psit) to [out=80, in=-90] (ure);
		\strand [thick] (psi) to (psit);
		\end{knot}
		\end{tikzpicture}
	}
	=\sum\limits_{i\in\cat{O}(\cat{A})}
	\hbox{
		\begin{tikzpicture}[baseline=(current  bounding  box.center)]
		
		\node (inw) at (-0.1,-1.5){\strut$i$};
		\node (ine) at (0.1,-1.5){\strut$i^*$};

		\coordinate (psi) at (0,1.5);

		\node (ulw) at (-0.5,3){\strut$i$};
		\node (ule) at (-0.3,3){\strut$i^*$};
		
		\node (urw) at (0.3,3){\strut$i$};
		\node (ure) at (0.5,3){\strut$i^*$};

		\begin{knot}[clip width=3,clip radius =3pt]
		\strand [thick] (inw) to [out=90,in=-90] (ulw);
		\strand [thick] (ine) to [out=90,in=-90] (ure);
		\strand [thick] (urw) to [out=-90,in=0] (psi) to [out=-180,in=-90] (ule);
		\end{knot}
		\end{tikzpicture}
	},
	$$
	and this is what we wanted to show. For the proof of the other case, we read the diagrams top to bottom and see that two twists cancel.
\end{proof}

\begin{lem}
	The morphism $\eta$ makes the corresponding diagram from Definition \ref{DCbraideddef}\eqref{DCetabraid} for $\beta_2$ commute, where $\otimes_1=\otimes_c$ and $\otimes_2=\otimes_s$. Analogously, the morphism $\zeta$ makes the diagram from Definition \ref{DCbraideddef}\eqref{DCetabraid} commute, where $\otimes_1=\otimes_s$ and $\otimes_2=\otimes_c$.
\end{lem}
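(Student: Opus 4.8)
The plan is to first write down explicitly the diagram that must commute. With $\otimes_1=\otimes_c$, $\otimes_2=\otimes_s$ and $\beta_2=\beta_s$ the symmetry of $\otimes_s$ from Equation \eqref{symmorph}, the analogue of \eqref{etabraid} asserts the equality of the two composites
$$(c\otimes_c c')\otimes_s(d\otimes_c d')\xrightarrow{\eta_{c,c',d,d'}}(c\otimes_s d)\otimes_c(c'\otimes_s d')\xrightarrow{\beta_s\otimes_c\beta_s}(d\otimes_s c)\otimes_c(d'\otimes_s c')$$
and
$$(c\otimes_c c')\otimes_s(d\otimes_c d')\xrightarrow{\beta_s}(d\otimes_c d')\otimes_s(c\otimes_c c')\xrightarrow{\eta_{d,d',c,c'}}(d\otimes_s c)\otimes_c(d'\otimes_s c').$$
Since $\eta$ and $\beta_s$ are already known to be morphisms in $\dcentcat{A}$, and the forgetful functor $\Phi\colon\dcentcat{A}\rar\cat{A}$ is faithful, it suffices to prove equality of the underlying string diagrams in $\cat{A}$. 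The first observation I would record is that both composites induce the same permutation of the four underlying strands: starting from the ordered tuple $(c,c',d,d')$, the top--right route regroups to $(c,d,c',d')$ via $\eta$ and then swaps inside each $\otimes_s$--factor to reach $(d,c,d',c')$, while the bottom--left route first swaps the two $\otimes_c$--blocks to $(d,d',c,c')$ and then $\eta_{d,d',c,c'}$ regroups to $(d,c,d',c')$. They agree.

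Next I would unpack both sides as string diagrams using the definitions \eqref{laxproj} and \eqref{symmorph}, replacing every projection ($\bigtriangledown$) immediately followed by an inclusion ($\bigtriangleup$) by the idempotent ring via \eqref{inclprojprops}. Because every crossing occurring in $\eta$ and in $\beta_s$ is, by construction, the symmetry $s$ of $\cat{A}$ on underlying objects, the crossing content of each composite is a sequence of symmetries realizing the common permutation $(c,c',d,d')\mapsto(d,c,d',c')$ identified above. Here is where the symmetry of $\cat{A}$ does the essential work: any two such sequences of symmetry crossings are equal, so no twist corrections appear and the two braiding patterns coincide. The remaining discrepancy between the two diagrams is purely in the placement and number of the inclusion and projection triangles, which I would reconcile using slicing (Lemma \ref{slicing}) to slide the symmetry strands freely through the triangles, together with Equation \eqref{crossinginteraction} to interchange resolved and unresolved crossings, and finally the idempotent relations of \eqref{inclprojprops} to absorb or cancel adjacent triangle pairs.

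The main obstacle I anticipate is bookkeeping rather than conceptual: the $\beta_s$ in the bottom--left route crosses the two $\otimes_c$--blocks $c\otimes_c c'$ and $d\otimes_c d'$ as whole strands, whereas in the top--right route the two copies of $\beta_s$ act within the regrouped $\otimes_s$--factors. Reconciling these requires repeated use of the monoidality and naturality of $s$, to split a block crossing into individual strand crossings, together with slicing, so that the rings produced when collapsing projection--inclusion pairs end up in matching positions and can be cancelled. Once the triangles are aligned and the crossings identified, both diagrams reduce to the same normal form, which proves commutativity. For the analogous statement involving $\zeta$, with $\otimes_1=\otimes_s$ and $\otimes_2=\otimes_c$, one reads precisely the same chains of diagrams from top to bottom, exactly as in the preceding lemmas.
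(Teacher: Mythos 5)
Your proposal matches the paper's own proof in all essentials: both routes are unpacked as string diagrams, the intermediate projection--inclusion pairs are turned into idempotent rings and cancelled (via slicing and Equation \eqref{crossinginteraction}, as in the earlier coherence lemma), and since every crossing occurring in $\eta$ and in $\beta_s$ is the symmetry of $\cat{A}$ realizing the same permutation $(c,c',d,d')\mapsto(d,c,d',c')$, both composites reduce to the identical normal form, with the $\zeta$ case obtained by reading the diagrams top to bottom. The only slip is terminological: relative to the paper's convention in \eqref{incprojprop}, $\bigtriangleup$ is the projection and $\bigtriangledown$ the inclusion (so the ring-producing pairs are $\bigtriangleup$ followed by $\bigtriangledown$), but this does not affect the argument.
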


\begin{proof}
	For the first statement, the top route in the diagram computes as:
	$$
	\hbox{
		\begin{tikzpicture}[baseline=(current  bounding  box.center)]
		
		\coordinate (in) at (0,0);		
		
		\node (tr1) at (0,0.6) {\strut$\bigtriangledown$};
		\coordinate (tr1d) at (0,0.5);
		\coordinate (tr1uw) at (-0.1,0.7);
		\coordinate (tr1ue) at (0.1,0.7);
		\coordinate (tr1uww) at (-0.15,0.7);
		\coordinate (tr1uee) at (0.15,0.7);

		\node (trw2) at (-0.15,1.6) {\strut$\bigtriangleup$};
		\coordinate (trw2dw) at (-0.25,1.52);
		\coordinate (trw2de) at (-0.05,1.52);
		\coordinate (trw2u) at (-0.15,1.75);
		
		\node (tre2) at (0.15,1.6) {\strut$\bigtriangleup$};
		\coordinate (tre2dw) at (0.05,1.52);
		\coordinate (tre2de) at (0.25,1.52);
		\coordinate (tre2u) at (0.15,1.75);

		\node (trw3) at (-0.15,2) {\strut$\bigtriangledown$};
		\coordinate (trw3d) at (-0.15,1.9);
		\coordinate (trw3uw) at (-0.25,2.1);
		\coordinate (trw3ue) at (-0.05,2.1);
		
		\node (tre3) at (0.15,2) {\strut$\bigtriangledown$};
		\coordinate (tre3d) at (0.15,1.9);
		\coordinate (tre3uw) at (0.05,2.1);
		\coordinate (tre3ue) at (0.25,2.1);

		\node (trw4) at (-0.15,2.6) {\strut$\bigtriangleup$};
		\coordinate (trw4dw) at (-0.25,2.52);
		\coordinate (trw4de) at (-0.05,2.52);
		\coordinate (trw4u) at (-0.15,2.75);
		
		\node (tre4) at (0.15,2.6) {\strut$\bigtriangleup$};
		\coordinate (tre4dw) at (0.05,2.52);
		\coordinate (tre4de) at (0.25,2.52);
		\coordinate (tre4u) at (0.15,2.75);

		\coordinate (outw) at (-0.15, 3.5) ;
		\coordinate (oute) at (0.15, 3.5) ;

		\begin{knot}[clip width=0,clip radius =3pt]
		\strand [thick] (in) to (tr1d);
		\strand [thick] (tr1uww) to [out=90,in=-90] (trw2dw);
		\strand [thick] (tr1uw) to [out=90,in=-90](tre2dw);
		\strand [thick] (tr1ue) to [out=90,in=-90] (trw2de);
		\strand [thick] (tr1uee) to [out=90,in=-90] (tre2de);
		\strand [thick] (trw2u) to [out=90,in=-90] (trw3d);
		\strand [thick] (tre2u) to [out=90,in=-90] (tre3d);
		\strand [thick] (trw3uw) to [out=90,in=-90] (trw4de);
		\strand [thick] (trw3ue) to [out=90,in=-90] (trw4dw);
		\strand [thick] (tre3uw) to [out=90,in=-90] (tre4de);
		\strand [thick] (tre3ue) to [out=90,in=-90] (tre4dw);
		\strand [thick] (trw4u) to (outw);
		\strand [thick] (tre4u) to (oute);
		\end{knot}
		\end{tikzpicture}
	}
	=
	\hbox{
		\begin{tikzpicture}[baseline=(current  bounding  box.center)]
		
		\coordinate (in) at (0,0);		
		
		\node (tr1) at (0,0.6) {\strut$\bigtriangledown$};
		\coordinate (tr1d) at (0,0.5);
		\coordinate (tr1uw) at (-0.1,0.7);
		\coordinate (tr1ue) at (0.1,0.7);
		\coordinate (tr1uww) at (-0.15,0.7);
		\coordinate (tr1uee) at (0.15,0.7);

		\node (trw4) at (-0.15,2.6) {\strut$\bigtriangleup$};
		\coordinate (trw4dw) at (-0.25,2.52);
		\coordinate (trw4de) at (-0.05,2.52);
		\coordinate (trw4u) at (-0.15,2.75);
		
		\node (tre4) at (0.15,2.6) {\strut$\bigtriangleup$};
		\coordinate (tre4dw) at (0.05,2.52);
		\coordinate (tre4de) at (0.25,2.52);
		\coordinate (tre4u) at (0.15,2.75);

		\coordinate (outw) at (-0.15, 3.5) ;
		\coordinate (oute) at (0.15, 3.5) ;

		\begin{knot}[clip width=0,clip radius =3pt]
		\strand [thick] (in) to (tr1d);
		\strand [thick] (tr1uww) to [out=90,in=-90] (trw4de);
		\strand [thick] (tr1uw) to [out=90,in=-90](tre4de);
		\strand [thick] (tr1ue) to [out=90,in=-90] (trw4dw);
		\strand [thick] (tr1uee) to [out=90,in=-90] (tre4dw);
		\strand [thick] (trw4u) to (outw);
		\strand [thick] (tre4u) to (oute);
		\end{knot}
		\end{tikzpicture}
	},
	$$
	where we immediately cancelled the rings coming from the projection after inclusion (like in the proof of Lemma \ref{DCcohef}). Similarly, the bottom route computes as:
	$$
	\hbox{
		\begin{tikzpicture}[baseline=(current  bounding  box.center)]
		
		\coordinate (in) at (0,0);		
		
		\node (tr1) at (0,0.6) {\strut$\bigtriangledown$};
		\coordinate (tr1d) at (0,0.5);
		\coordinate (tr1uw) at (-0.1,0.7);
		\coordinate (tr1ue) at (0.1,0.7);
		\coordinate (tr1uww) at (-0.15,0.7);
		\coordinate (tr1uee) at (0.15,0.7);

		\node (tr2) at (-0,1.6) {\strut$\bigtriangleup$};
		\coordinate (tr2dw) at (-0.1,1.52);
		\coordinate (tr2de) at (0.1,1.52);
		\coordinate (tr2u) at (-0.0,1.75);
		\coordinate (tr2dww) at (-0.15,1.52);
		\coordinate (tr2dee) at (0.15,1.52);

		\node (tr3) at (0,2) {\strut$\bigtriangledown$};
		\coordinate (tr3d) at (0,1.9);
		\coordinate (tr3uw) at (-0.1,2.1);
		\coordinate (tr3ue) at (0.1,2.1);
		\coordinate (tr3uww) at (-0.15,2.1);
		\coordinate (tr3uee) at (0.15,2.1);

		\node (trw4) at (-0.15,2.6) {\strut$\bigtriangleup$};
		\coordinate (trw4dw) at (-0.25,2.52);
		\coordinate (trw4de) at (-0.05,2.52);
		\coordinate (trw4u) at (-0.15,2.75);
		
		\node (tre4) at (0.15,2.6) {\strut$\bigtriangleup$};
		\coordinate (tre4dw) at (0.05,2.52);
		\coordinate (tre4de) at (0.25,2.52);
		\coordinate (tre4u) at (0.15,2.75);

		\coordinate (outw) at (-0.15, 3.5) ;
		\coordinate (oute) at (0.15, 3.5) ;

		\begin{knot}[clip width=0,clip radius =3pt]
		\strand [thick] (in) to (tr1d);
		\strand [thick] (tr1uww) to [out=90,in=-90] (tr2de);
		\strand [thick] (tr1uw) to [out=90,in=-90](tr2dee);
		\strand [thick] (tr1ue) to [out=90,in=-90] (tr2dww);
		\strand [thick] (tr1uee) to [out=90,in=-90] (tr2dw);
		\strand [thick] (tr2u) to [out=90,in=-90] (tr3d);
		\strand [thick] (tr3uww) to [out=90,in=-90] (trw4dw);
		\strand [thick] (tr3ue) to [out=90,in=-90] (trw4de);
		\strand [thick] (tr3uw) to [out=90,in=-90] (tre4dw);
		\strand [thick] (tr3uee) to [out=90,in=-90] (tre4de);
		\strand [thick] (trw4u) to (outw);
		\strand [thick] (tre4u) to (oute);
		\end{knot}
		\end{tikzpicture}
	}
	=
	\hbox{
		\begin{tikzpicture}[baseline=(current  bounding  box.center)]
		
		\coordinate (in) at (0,0);		
		
		\node (tr1) at (0,0.6) {\strut$\bigtriangledown$};
		\coordinate (tr1d) at (0,0.5);
		\coordinate (tr1uw) at (-0.1,0.7);
		\coordinate (tr1ue) at (0.1,0.7);
		\coordinate (tr1uww) at (-0.15,0.7);
		\coordinate (tr1uee) at (0.15,0.7);

		\node (trw4) at (-0.15,2.6) {\strut$\bigtriangleup$};
		\coordinate (trw4dw) at (-0.25,2.52);
		\coordinate (trw4de) at (-0.05,2.52);
		\coordinate (trw4u) at (-0.15,2.75);
		
		\node (tre4) at (0.15,2.6) {\strut$\bigtriangleup$};
		\coordinate (tre4dw) at (0.05,2.52);
		\coordinate (tre4de) at (0.25,2.52);
		\coordinate (tre4u) at (0.15,2.75);

		\coordinate (outw) at (-0.15, 3.5) ;
		\coordinate (oute) at (0.15, 3.5) ;

		\begin{knot}[clip width=0,clip radius =3pt]
		\strand [thick] (in) to (tr1d);
		\strand [thick] (tr1uww) to [out=90,in=-90] (trw4de);
		\strand [thick] (tr1uw) to [out=90,in=-90](tre4de);
		\strand [thick] (tr1ue) to [out=90,in=-90] (trw4dw);
		\strand [thick] (tr1uee) to [out=90,in=-90] (tre4dw);
		\strand [thick] (trw4u) to (outw);
		\strand [thick] (tre4u) to (oute);
		\end{knot}
		\end{tikzpicture}
	}.
	$$
	Reading the diagrams top to bottom yields a proof for the other assertion in the lemma.
\end{proof}

\begin{lem}
	The morphism $\eta$ makes the diagram from Definition \ref{DCbraideddef}\eqref{DCetabraid} commute, where $\otimes_1=\otimes_c$ and $\otimes_2=\otimes_s$. Analogously, the morphism $\zeta$ makes the corresponding diagram from Definition \ref{DCbraideddef}\eqref{DCetabraid} for $\beta_2$ commute, where $\otimes_1=\otimes_s$ and $\otimes_2=\otimes_c$.
\end{lem}

\begin{proof}
	The top route computes as:
	$$
	\hbox{
		\begin{tikzpicture}[baseline=(current  bounding  box.center)]
		
		\coordinate (in) at (0,0);		
		
		\node (tr1) at (0,0.6) {\strut$\bigtriangledown$};
		\coordinate (tr1d) at (0,0.5);
		\coordinate (tr1uw) at (-0.1,0.7);
		\coordinate (tr1ue) at (0.1,0.7);
		\coordinate (tr1uww) at (-0.15,0.7);
		\coordinate (tr1uee) at (0.15,0.7);

		\node (trw4) at (-0.15,1.6) {\strut$\bigtriangleup$};
		\coordinate (trw4dw) at (-0.25,1.52);
		\coordinate (trw4de) at (-0.05,1.52);
		\coordinate (trw4u) at (-0.15,1.75);
		
		\node (tre4) at (0.15,1.6) {\strut$\bigtriangleup$};
		\coordinate (tre4dw) at (0.05,1.52);
		\coordinate (tre4de) at (0.25,1.52);
		\coordinate (tre4u) at (0.15,1.75);

		\coordinate (outw) at (-0.15, 3) ;
		\coordinate (oute) at (0.15, 3) ;

		\begin{knot}[clip width=3,clip radius =5pt]
		\strand [thick] (in) to (tr1d);
		\strand [thick] (tr1uww) to [out=90,in=-90] (trw4dw);
		\strand [thick] (tr1uw) to [out=90,in=-90](tre4dw);
		\strand [thick] (tr1ue) to [out=90,in=-90] (trw4de);
		\strand [thick] (tr1uee) to [out=90,in=-90] (tre4de);
		\strand [thick] (trw4u) to [out=90,in=-90] (oute);
		\strand [thick] (tre4u) to [out=90,in=-90] (outw);
		\end{knot}
		\end{tikzpicture}
	},
	$$
	our goal is to show that the bottom route in the diagram is the same. For this composite we have that:
	$$
	\hbox{
		\begin{tikzpicture}[baseline=(current  bounding  box.center)]
		
		\coordinate (in) at (0,0);		
		
		\node (tr1) at (0,0.3) {\strut$\bigtriangledown$};
		\coordinate (tr1d) at (0,0.2);
		\coordinate (tr1uw) at (-0.1,0.4);
		\coordinate (tr1ue) at (0.1,0.4);
		\coordinate (tr1uww) at (-0.15,0.4);
		\coordinate (tr1uee) at (0.15,0.4);

		\node (tr2) at (0,1.6) {\strut$\bigtriangleup$};
		\coordinate (tr2dw) at (-0.1,1.52);
		\coordinate (tr2de) at (0.1,1.52);
		\coordinate (tr2u) at (-0.0,1.75);
		\coordinate (tr2dww) at (-0.15,1.52);
		\coordinate (tr2dee) at (0.15,1.52);

		\node (tr3) at (0,2) {\strut$\bigtriangledown$};
		\coordinate (tr3d) at (0,1.9);
		\coordinate (tr3uw) at (-0.1,2.1);
		\coordinate (tr3ue) at (0.1,2.1);
		\coordinate (tr3uww) at (-0.15,2.1);
		\coordinate (tr3uee) at (0.15,2.1);

		\node (trw4) at (-0.15,2.6) {\strut$\bigtriangleup$};
		\coordinate (trw4dw) at (-0.25,2.52);
		\coordinate (trw4de) at (-0.05,2.52);
		\coordinate (trw4u) at (-0.15,2.75);
		
		\node (tre4) at (0.15,2.6) {\strut$\bigtriangleup$};
		\coordinate (tre4dw) at (0.05,2.52);
		\coordinate (tre4de) at (0.25,2.52);
		\coordinate (tre4u) at (0.15,2.75);

		\coordinate (outw) at (-0.15, 3.5) ;
		\coordinate (oute) at (0.15, 3.5) ;

		\begin{knot}[clip width=4,clip radius =3pt]
		\strand [thick] (in) to (tr1d);
		\strand [thick] (tr1uww) to [out=130,in=-50] (tr2dw);
		\strand [thick] (tr1uw) to [out=50,in=-130](tr2dww);
		\strand [thick] (tr1ue) to [out=130,in=-50] (tr2dee);
		\strand [thick] (tr1uee) to [out=50,in=-130] (tr2de);
		\strand [thick] (tr2u) to [out=90,in=-90] (tr3d);
		\strand [thick] (tr3uww) to [out=90,in=-90] (trw4dw);
		\strand [thick] (tr3ue) to [out=90,in=-90] (trw4de);
		\strand [thick] (tr3uw) to [out=90,in=-90] (tre4dw);
		\strand [thick] (tr3uee) to [out=90,in=-90] (tre4de);
		\strand [thick] (trw4u) to (outw);
		\strand [thick] (tre4u) to (oute);
		\end{knot}
		\end{tikzpicture}
	}
	=
	\hbox{
		\begin{tikzpicture}[baseline=(current  bounding  box.center)]
		
		\coordinate (in) at (0,0);		
		
		\node (tr1) at (0,0.3) {\strut$\bigtriangledown$};
		\coordinate (tr1d) at (0,0.2);
		\coordinate (tr1uw) at (-0.1,0.4);
		\coordinate (tr1ue) at (0.1,0.4);
		\coordinate (tr1uww) at (-0.15,0.4);
		\coordinate (tr1uee) at (0.15,0.4);

		\node (trw4) at (-0.15,2.6) {\strut$\bigtriangleup$};
		\coordinate (trw4dw) at (-0.25,2.52);
		\coordinate (trw4de) at (-0.05,2.52);
		\coordinate (trw4u) at (-0.15,2.75);
		
		\node (tre4) at (0.15,2.6) {\strut$\bigtriangleup$};
		\coordinate (tre4dw) at (0.05,2.52);
		\coordinate (tre4de) at (0.25,2.52);
		\coordinate (tre4u) at (0.15,2.75);

		\coordinate (outw) at (-0.15, 3.5) ;
		\coordinate (oute) at (0.15, 3.5) ;
		
		\coordinate (west1) at (-0.6,1.1);
		\coordinate (north1) at (0.0,1.3);
		\coordinate (east1) at (0.6,1.1);
		\coordinate (south1) at (0.0,0.9);

		\begin{knot}[clip width=4,clip radius =6pt]
		\strand [thick] (in) to (tr1d);
		\strand [thick] (tr1uww) to [out=100,in=-120] (tre4dw);
		\strand [thick] (tr1uw) to [out=90,in=-90](trw4dw);
		\strand [thick] (tr1ue) to [out=90,in=-90] (tre4de);
		\strand [thick] (tr1uee) to [out=80,in=-60] (trw4de);
		\strand [thick] (trw4u) to (outw);
		\strand [thick] (tre4u) to (oute);
		\strand [blue, thick] (west1) to [out=90,in=-180] (north1) to [out=0,in=90] (east1) to [out=-90,in=0] (south1) to [out=-180,in=-90] (west1);
		\flipcrossings{2,4,8,10}
		\end{knot}
		\end{tikzpicture}
	}
	=
	\hbox{
		\begin{tikzpicture}[baseline=(current  bounding  box.center)]
		
		\coordinate (in) at (0,0);		
		
		\node (tr1) at (0,0.3) {\strut$\bigtriangledown$};
		\coordinate (tr1d) at (0,0.2);
		\coordinate (tr1uw) at (-0.1,0.4);
		\coordinate (tr1ue) at (0.1,0.4);
		\coordinate (tr1uww) at (-0.15,0.4);
		\coordinate (tr1uee) at (0.15,0.4);

		\coordinate (cw) at (0.1,0.9);
		\coordinate (ce) at (0.15,0.9);
		
		\coordinate (cw2) at (0.1,0.9);
		\coordinate (ce2) at (0.15,0.9);

		\node (trw4) at (-0.15,2.6) {\strut$\bigtriangleup$};
		\coordinate (trw4dw) at (-0.25,2.52);
		\coordinate (trw4de) at (-0.05,2.52);
		\coordinate (trw4u) at (-0.15,2.75);
		
		\node (tre4) at (-0.1,1) {\strut$\bigtriangleup$};
		\coordinate (tre4dw) at (-0.2,0.92);
		\coordinate (tre4de) at (0.0,0.92);
		\coordinate (tre4u) at (-0.1,1.15);
		
		\coordinate (rc) at (0.15,2.5);
		
		\coordinate (outw) at (-0.15, 3.5);
		\coordinate (oute) at (0.15, 3.5);

		\begin{knot}[clip width=4,clip radius =4pt]
		\strand [thick] (in) to (tr1d);
		\strand [thick] (tr1uww) to [out=90,in=-90] (tre4dw);
		\strand [thick] (tr1uw) to [out=80,in=-90] (cw) to [out=90,in=-90] (cw2) to [out=90,in=-90] (trw4dw);
		\strand [thick] (tr1ue) to [out=90,in=-90] (tre4de);
		\strand [thick] (tr1uee) to [out=90,in=-90] (ce) to [out=90,in=-90] (ce2) to [out=90,in=-90] (trw4de);
		\strand [thick] (trw4u) to [out=90,in=-90] (outw);
		\strand [thick] (tre4u) to [out=90,in=-90] (rc) to [out=90,in=-90] (oute);
		\flipcrossings{1,2}
		\end{knot}
		\end{tikzpicture}
	}
	=
	\hbox{
		\begin{tikzpicture}[baseline=(current  bounding  box.center)]
		
		\coordinate (in) at (0,0);		
		
		\node (tr1) at (0,0.3) {\strut$\bigtriangledown$};
		\coordinate (tr1d) at (0,0.2);
		\coordinate (tr1uw) at (-0.1,0.4);
		\coordinate (tr1ue) at (0.1,0.4);
		\coordinate (tr1uww) at (-0.15,0.4);
		\coordinate (tr1uee) at (0.15,0.4);

		\node (trw4) at (0.15,1) {\strut$\bigtriangleup$};
		\coordinate (trw4dw) at (0.05,0.92);
		\coordinate (trw4de) at (0.25,0.92);
		\coordinate (trw4u) at (0.15,1.15);
		
		\node (tre4) at (-0.15,1) {\strut$\bigtriangleup$};
		\coordinate (tre4dw) at (-0.25,0.92);
		\coordinate (tre4de) at (-0.05,0.92);
		\coordinate (tre4u) at (-0.15,1.15);

		\coordinate (outw) at (-0.15, 3.5);
		\coordinate (oute) at (0.15, 3.5);

		\begin{knot}[clip width=4,clip radius =4pt]
		\strand [thick] (in) to (tr1d);
		\strand [thick] (tr1uww) to [out=90,in=-90] (tre4dw);
		\strand [thick] (tr1uw) to [out=90,in=-90] (trw4dw);
		\strand [thick] (tr1ue) to [out=90,in=-90] (tre4de);
		\strand [thick] (tr1uee) to [out=90,in=-90] (trw4de);
		\strand [thick] (trw4u) to [out=90,in=-90] (outw);
		\strand [thick] (tre4u) to [out=90,in=-90] (oute);
		\flipcrossings{1}
		\end{knot}
		\end{tikzpicture}
	},
	$$
	where in the first equality we slid the ring resulting from the projection-inclusion pair down and the second equality uses slicing (Lemma \ref{DCslicing}) to bring the ring out. The rest of the computation uses standard string diagram identities to bring the diagram into the desired form. For the analogous statement for $\zeta$, we read the diagrams top to bottom.
\end{proof}

This completes the proof of the main Theorem \ref{DCdc2foldmon}.


\begin{thebibliography}{BFAV03}
	
	\bibitem[BD98]{Baez1998}
	John~C. Baez and James Dolan.
	\newblock {Categorification}.
	\newblock {\em Contemp. Math.}, 230(Higher Category Theory):1--36, 1998.
	
	\bibitem[BFAV03]{Balteanu1998}
	C.~Balteanu, Z.~Fiedorowicz, R.~Anzl, and R.~Vogt.
	\newblock {Iterated monoidal categories}.
	\newblock {\em Adv. Math.}, 176(2):277--349, 2003.
	
	\bibitem[Del90]{Deligne1990}
	Pierre Deligne.
	\newblock {Cat{\'{e}}gories tannakiennes}.
	\newblock {\em The Grothendieck Festschrift}, II(87):111--195, 1990.
	
	\bibitem[Del02]{Deligne2002}
	Pierre Deligne.
	\newblock {Cat{\'{e}}gories tensorielles}.
	\newblock {\em Moscow Math. J.}, 2(2):227--248, 2002.
	
	\bibitem[ENO05]{Etingof2002}
	Pavel Etingof, Dmitri Nikshych, and Viktor Ostrik.
	\newblock {On fusion categories}.
	\newblock {\em Ann. Math.}, 162(2):581--642, 2005.
	
	\bibitem[JS91a]{Joyal1991}
	Andr{\'{e}} Joyal and Ross Street.
	\newblock {The geometry of tensor calculus, I}.
	\newblock {\em Adv. Math.}, 88(1):55--112, jul 1991.
	
	\bibitem[JS91b]{Joyal1991a}
	Andr{\'{e}} Joyal and Ross Street.
	\newblock {Tortile Yang-Baxter operators in tensor categories}.
	\newblock {\em J. Pure Appl. Algebr.}, 71(1):43--51, apr 1991.
	
	\bibitem[Maj91]{Majid1991}
	Shahn Majid.
	\newblock {Representations, duals and quantum doubles of monoidal categories}.
	\newblock In {\em Proc. Winter Sch. "Geometry Physics"}, volume II.-Supple,
	pages 197--206. Circolo Matematico di Palermo(Palermo), 1991.
	
	\bibitem[Tha20]{Tham2020}
	Ying~Hong Tham.
	\newblock {Reduced Tensor Product on the Drinfeld Center}.
	\newblock {\em ArXiv:2004.09611}, apr 2020.
	
	\bibitem[Was19]{Wasserman2017b}
	Thomas~A. Wasserman.
	\newblock {Drinfeld Centre-Crossed Braided Tensor Categories}.
	\newblock {\em To Appear. High. Struct.}, ArXiv:1910:13557, 2019.
	
	\bibitem[Was20]{Wasserman2017}
	Thomas~A. Wasserman.
	\newblock {The symmetric tensor product on the Drinfeld centre of a symmetric
		fusion category}.
	\newblock {\em J. Pure Appl. Algebr.}, 224(8):106348, 2020.
	
\end{thebibliography}
\end{document}